\theoremstyle{plain}
\newtheorem{theorem}[subsection]{Theorem}
\newtheorem{proposition}[subsection]{Proposition}
\newtheorem{lemma}[subsection]{Lemma}
\theoremstyle{definition}
\newtheorem{definition}[subsection]{Definition}
\newcommand{\ad}{\mathrm{ad}}
\newcommand{\Aut}{\mathrm{Aut}}
\newcommand{\Bun}{\mathrm{Bun}}
\newcommand{\F}{\mathbb{F}}
\newcommand{\Gal}{\mathrm{Gal}}
\newcommand{\Hom}{\mathrm{Hom}}
\newcommand{\Q}{\mathbb{Q}}
\newcommand{\Rep}{\mathrm{Rep}}
\newcommand{\Spec}{\textrm{Spec}}
\newcommand{\Z}{\mathbb{Z}}
\begin{document}

\title{Reductive group schemes over the Fargues-Fontaine curve}
\author{Johannes Ansch\"{u}tz}
\email{ja@math.uni-bonn.de}
\date{\today}

\begin{abstract}
For an arbitrary non-archimedean local field we classify reductive group schemes over the corresponding Fargues-Fontaine curve by group schemes over the category of isocrystals. We then classify torsors under such reductive group schemes by a generalization of Kottwitz' set $B(G)$. In particular, we extend a theorem of Fargues on torsors under constant reductive groups to the case of equal characteristic. 
\end{abstract}

\maketitle

\section{Introduction}

Let $E$ be a non-archimedean local field and let $G/E$ be a reductive group over $E$. If $E/\Q_p$ is $p$-adic L.\ Fargues classified $G$-torsors on the corresponding Fargues-Fontaine curve $X_E$ (cf.\ \cite{fargues_g_torseurs_en_theorie_de_hodge_p_adique}) associated to $E$ (and an algebraically closed perfectoid extension $F$ of the residue field $\F_q$ of $E$). His result is phrased in terms of R.\ Kottwitz' set $B(G)$ associated with $G$ (cf.\ \cite{kottwitz_isocrystals_with_additional_structure_I}). For a general non-archimedean local field $E$ let 
$$
L:=\widehat{E^\mathrm{un}}
$$ 
be the completion of the maximal unramified extension $E^{\mathrm{un}}$ of $E$ and let 
$$
\varphi_L\colon L\to L
$$ be the Frobenius of $L$. In \cite{kottwitz_isocrystals_with_additional_structure_I} (cf.\ \cite{kottwitz_bg_for_all_local_and_global_fields}) R.\ Kottwitz defined
$$
B(G):=G(L)/{\varphi_L-\mathrm{conjugacy}}
$$
as the set of $\varphi_L-$conjugacy classes in $G(L)$.
Our first main theorem is the classification of $G$-torsors on the Fargues-Fontaine curve $X_E$ by this set $B(G)$ generalizing L.\ Fargues' result for $E/\Q_p$ $p$-adic.

\begin{theorem}[cf.\ \Cref{theorem: fargues theorem}]
\label{theorem: fargues theorem introduction}
There exists a canonical bijection
$$
B(G)\cong H^1_{\acute{e}t}(X_E,G).
$$
\end{theorem}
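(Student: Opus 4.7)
The plan is to follow the strategy Fargues employed in the $p$-adic case, using the classification of reductive group schemes over $X_E$ in terms of group schemes over the category of isocrystals (the first main theorem of the paper) as the bridge that replaces certain inputs peculiar to mixed characteristic. First I would construct a canonical map
$$
\Theta \colon B(G) \to H^1_{\acute{e}t}(X_E, G)
$$
sending a class $[b]$ with $b \in G(L)$ to the $G$-torsor on $X_E$ obtained from the $G$-isocrystal $(G_L, b\varphi_L)$ via the realization functor from $G$-isocrystals to $G$-torsors on $X_E$. This functor can be extracted from the classification theorem (or constructed Tannakianly from the realization functor on ordinary isocrystals), and it sends $\varphi_L$-conjugate elements to isomorphic $G$-torsors, so $\Theta$ is well-defined.

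For bijectivity I would first handle two base cases. For $G = GL_n$, both sides classify respectively isocrystals and vector bundles on $X_E$, and the bijection is the classification theorem for vector bundles on the Fargues-Fontaine curve, available in equal characteristic thanks to Hartl-Pink and Kedlaya. For $G = T$ a torus, both sides are naturally identified with the Galois coinvariants $X_\ast(T)_{\Gal(\overline{E}/E)}$: on the $B(T)$ side via Kottwitz's isomorphism, and on the $H^1$ side by a direct cohomological computation using the Picard group of $X_E$.

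From the base cases, I would pass to general reductive $G$ in two steps. If $G$ is quasi-split, I would use a Harder-Narasimhan-type reduction theorem: every $G$-torsor on $X_E$ admits a canonical reduction to a parabolic, whose Levi part can be further reduced to a maximal torus, collapsing the classification to the torus case just handled. For arbitrary $G$, the result should then follow from the quasi-split case by inner twisting, using the compatibility of $B(\cdot)$ with pure inner forms and the fact that an appropriate twist brings $G$ to its quasi-split inner form.

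The main obstacle I anticipate is the HN-reduction step in equal characteristic. Fargues' proof in the $p$-adic case leans on geometric input — semistability and slope theory on $X_E$, and the fact that semistable $G$-bundles come from isocrystals — that must be reestablished in equal characteristic. It is precisely here that the classification of reductive group schemes by group schemes over isocrystals should enter most decisively: it allows one to transport reduction theory from the isocrystal side, where it is encoded in the combinatorics of Kottwitz's set and the Newton map, back to the curve. A secondary but nontrivial technical issue is the passage from quasi-split to general $G$, which requires keeping track of how the realization functor interacts with inner twisting and with the identification of automorphism groups of the basic $G$-torsors with the extended pure inner forms $J_b$.
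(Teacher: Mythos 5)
Your proposal follows the classical Fargues route: construct the realization map from $B(G)$, verify the base cases $GL_n$ and tori, then bootstrap to quasi-split groups via Harder--Narasimhan reduction to parabolics and Levis, and finally reach general $G$ by inner twisting. The paper instead runs a Tannakian argument: it defines the (non-exact) tensor functor $\mathrm{HN}\colon \Bun_{X_E}\to \mathrm{Fil}^\Q\Bun_{X_E}$, calls a fiber functor $\omega\colon \Rep_E(G)\to \Bun_{X_E}$ \emph{admissible} if $\mathrm{HN}\circ\omega$ is still exact, shows that admissible fiber functors factor through $\varphi$-$\mathrm{Mod}_L$ (using the vanishing $H^1(X_E,\mathcal{E})=0$ for semistable $\mathcal{E}$ of positive slope to trivialize the $U(\psi)$-torsor of splittings), and then proves that \emph{every} fiber functor is admissible. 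So the two strategies are genuinely different; yours decomposes $G$-bundles, the paper's decomposes the tensor functor.

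The real problem is that your proposed resolution of the hard step is circular. You correctly identify the obstacle: establishing HN-reduction (equivalently, compatibility of the HN filtration with the Tannakian structure) in equal characteristic, where $\Rep_E(G)$ is no longer semisimple. But you propose to close this gap by invoking the classification of reductive group schemes over $X_E$ by group schemes over $\varphi$-$\mathrm{Mod}_L$ --- i.e.\ the other main theorem of the paper. That classification theorem is itself proved \emph{from} the bijection $B(G)\cong H^1_{\acute{e}t}(X_E,G)$ (via the geometric simple-connectedness of $X_E$ and the factorization of admissible fiber functors), so it cannot be used as input here. What actually closes the gap in the paper is a representation-theoretic argument: to show that $\omega':=\mathrm{gr}\circ\mathrm{HN}\circ\omega$ takes a short exact sequence $0\to V\to V'\to V''\to 0$ to a short exact sequence, one reduces (via exterior powers and duals) to showing $\omega'(V)\to\omega'(V')$ is nonzero when $V$ is the trivial one-dimensional representation, and this is done by applying Haboush's theorem to produce a $G$-equivariant homogeneous polynomial $f\colon V'\to k$ nonvanishing on $V$, then converting $f$ into a $G$-equivariant splitting of $\mathrm{TS}_r(V)\to\mathrm{TS}_r(V')$ using \emph{symmetric tensors} $\mathrm{TS}_r$ (divided powers), not symmetric powers $\mathrm{Sym}^r$ --- the paper includes an explicit $\mathrm{SL}_2$ counter-example in characteristic $2$ showing that symmetric powers fail. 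None of this representation-theoretic machinery appears in your sketch, and without it the HN-reduction you invoke for quasi-split $G$ is unsupported in equal characteristic.
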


We mention that this theorem implies the computation of the Brauer group of $E$ (cf.\ \cite[Th\'eor\`eme 2.6.]{fargues_g_torseurs_en_theorie_de_hodge_p_adique} resp.\ \Cref{theorem: brauer group of the curve}) and that it is a starting point for L.\ Fargues' program to geometrize the local Langlands correspondence over $E$ (cf.\ \cite{fargues_geometrization_of_the_local_langlands_correspondence_overview}).
Similar to \cite{fargues_g_torseurs_en_theorie_de_hodge_p_adique} we present in \Cref{section: applications} applications of \Cref{theorem: fargues theorem introduction} to the \'etale and flat cohomology of $X_E$ with locally constant torsion coefficents.

In \cite[Remarque 1.2.]{fargues_g_torseurs_en_theorie_de_hodge_p_adique} L.\ Fargues asked the question how to generalize \Cref{theorem: fargues theorem introduction} about $G$-torsors on $X_E$ to the case where $G$ is no longer assumed to be constant, but an arbitrary reductive group scheme over $X_E$. Examples of possibly non-constant reductive group schemes over the Fargues-Fontaine curve $X_E$ can be constructed as follows. Let $\varphi-\mathrm{Mod}_L$ be the category of isocrystals over $L$ and let $\mathcal{O}_{\mathbb{G}}$ be a Hopf algebra in $\varphi-\mathrm{Mod}_L$ (such a Hopf algebra is also called an affine group scheme $\mathbb{G}$ over $\varphi-\mathrm{Mod}_L$ (cf.\ \Cref{definition: group scheme over isocrystals})). After fixing an embedding $\bar{\F}_q\subseteq F$ there exists a canonical tensor functor (cf.\ \cite[Section 8.2.3.]{fargues_fontaine_courbe_et_fibres_vectories_en_theorie_de_hodge_p_adique} resp.\ \cite{hartl_pink_vector_bundles_with_frobenius_structure})
$$
\mathcal{E}(-)\colon \varphi-\mathrm{Mod}_L\to \Bun_{X_E}
$$
from the category $\varphi-\mathrm{Mod}_L$ to the category of vector bundles on $X_E$. The image $\mathcal{E}(\mathcal{O}_{\mathbb{G}})$ of $\mathcal{O}_{\mathbb{G}}$ is then the Hopf algebra associated with a flat group scheme $\mathcal{G}$ over $X_E$. We call this group scheme $\mathcal{G}$ the ``group scheme associated with the affine group scheme $\mathbb{G}$ over $\varphi-\mathrm{Mod}_L$''.
Our first theorem about reductive group schemes over the Fargues-Fontaine curve is the following classification result.

\begin{theorem}[cf.\ \Cref{theorem: classification of reductive group schemes}]
\label{theorem: classification of reductive group schemes introduction}
Every reductive group scheme over the Fargues-Fontaine curve is associated with a, necessarily reductive, group scheme over $\varphi-\mathrm{Mod}_L$.
\end{theorem}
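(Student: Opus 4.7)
The plan is to express any reductive $\mathcal{G}/X_E$ as a twist of its Chevalley form and to transport the classifying automorphism torsor through $\mathcal{E}$ using \Cref{theorem: fargues theorem introduction}. Since $X_E$ is connected, any reductive $\mathcal{G}/X_E$ has a constant root datum, and by the Demazure--Grothendieck theory it is the twist of the base change $G_{0,X_E}$ of the associated pinned Chevalley form $G_0/\Z$ by an $\Aut(G_0)$-torsor $\mathcal{T}$ on $X_E$ in the étale topology. Fixing the pinning yields a semidirect product
\[
\Aut(G_0)\cong G_0^{\ad}\rtimes \mathrm{Out}(G_0),
\]
with $\mathrm{Out}(G_0)$ a finite constant group scheme. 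The construction of $\mathbb{G}$ is thus reduced to lifting $\mathcal{T}$ to an $\Aut(G_0)$-torsor $\mathbb{T}$ in $\varphi-\mathrm{Mod}_L$: twisting the natural base change of $G_0$ to $\varphi-\mathrm{Mod}_L$ by $\mathbb{T}$ then yields the desired $\mathbb{G}$, which is automatically reductive.

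I would lift $\mathcal{T}$ in two steps, along the semidirect product. For the outer part, the induced class in $H^1_{\acute{e}t}(X_E,\mathrm{Out}(G_0))$ corresponds to a finite étale cover $\widetilde{X}\to X_E$. Using the identification of finite étale covers of $X_E$ with finite étale $E$-algebras and hence, by unramified base change, with finite étale algebras in $\varphi-\mathrm{Mod}_L$, one lifts $\widetilde{X}$ to an $\mathrm{Out}(G_0)$-object in the category of isocrystals. For the inner part, the pull-back of $\mathcal{T}$ to $\widetilde{X}$ is a torsor under $G_0^{\ad}$, now viewed as a reductive group over the finite unramified extension $\widetilde{E}\supset E$ corresponding to $\widetilde{X}$. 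Applying \Cref{theorem: fargues theorem introduction} over $\widetilde{E}$ classifies this torsor by a class in
\[
B(G_0^{\ad})=G_0^{\ad}(\widetilde{L})/{\varphi_{\widetilde{L}}-\mathrm{conjugacy}},
\]
which is precisely a $G_0^{\ad}$-torsor in $\varphi-\mathrm{Mod}_{\widetilde{L}}$. Assembling the two pieces via the semidirect product then produces $\mathbb{T}$, and hence $\mathbb{G}$.

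The main technical obstacle is to check that both lifts are strictly compatible with $\mathcal{E}$, not merely abstractly isomorphic to their targets: one must verify that the equivalence between finite étale covers of $X_E$ and finite étale algebras in $\varphi-\mathrm{Mod}_L$ is literally induced by $\mathcal{E}$, and that the Kottwitz parameter from \Cref{theorem: fargues theorem introduction} maps under $\mathcal{E}$ (applied over $\widetilde{X}$) to the given $G_0^{\ad}$-torsor. Both points amount to strengthening \Cref{theorem: fargues theorem introduction} into a functorial statement --- particularly one that is natural under unramified base change and compatible with semidirect products by the finite group $\mathrm{Out}(G_0)$.
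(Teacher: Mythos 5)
Your overall decomposition of $\Aut(G_0)\cong G_0^{\ad}\rtimes\mathrm{Out}(G_0)$ and the idea of lifting the classifying torsor through $\mathcal{E}(-)$ is in the right spirit, but the execution diverges from the paper in a way that creates real friction, and the one gap you acknowledge is precisely the hard part. The paper does not lift an $\Aut(G_0)$-torsor piece by piece. Instead, it uses the splitting of $\delta\colon H^1_{\acute et}(X_E,\Aut(G_0))\to H^1_{\acute et}(X_E,\mathrm{Out}(G_0))$ given by a pinning, together with the isomorphism $\pi_1^{\acute et}(X_E)\cong\Gal(\bar{E}/E)$ from \Cref{theorem: etale fundamental group of the curve}, to show in one step that $\mathcal{G}$ is an inner form over $X_E$ of a \emph{constant} quasi-split group $G'/E$. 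This immediately reduces the problem to a single fiber functor $\omega_{\mathcal{Q}}\colon\Rep_E((G')^{\ad})\to\Bun_{X_E}$ attached to a $(G')^{\ad}$-torsor $\mathcal{Q}$, and the Hopf algebra of $\mathcal{G}$ is recovered as $\omega_{\mathcal{Q}}(\mathcal{O}_{G'})$ via \Cref{lemma: inner forms}. Then \Cref{theorem: fargues theorem} guarantees $\omega_{\mathcal{Q}}$ is admissible, and \Cref{theorem: factorization of admissible fiber functors} produces the exact tensor functor $\omega'\colon\Rep_E((G')^{\ad})\to\varphi\text{-}\mathrm{Mod}_L$ with $\omega_{\mathcal{Q}}\cong\mathcal{E}(-)\circ\omega'$; the desired $\mathbb{G}$ is just $\underline{\Spec}(\omega'(\mathcal{O}_{G'}))$. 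The whole compatibility problem you flag at the end of your proposal is handled once and for all by this Tannakian factorization theorem, which is already functorial in the sense you need.

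In contrast, your two-step lift has no mechanism supplied for the key gluing. After lifting the outer class to a cover $\widetilde{X}\to X_E$ and (abstractly) the inner class to a $G_0^{\ad}$-torsor in $\varphi\text{-}\mathrm{Mod}_{\widetilde{L}}$, you must reassemble them into an $\Aut(G_0)$-object in $\varphi\text{-}\mathrm{Mod}_L$; this requires descending the inner lift along $\widetilde{X}\to X_E$ compatibly with the outer cocycle, and nothing in the bijection $B(G)\cong H^1_{\acute et}(X_E,G)$ of \Cref{theorem: fargues theorem} as a statement about isomorphism classes alone gives you that. You would effectively have to re-prove the categorical, functorial content of \Cref{theorem: factorization of admissible fiber functors} and \Cref{theorem: classification of admissible fiber functors}. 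Also, a small but real error: the cover $\widetilde{X}\to X_E$ classified by $H^1_{\acute et}(X_E,\mathrm{Out}(G_0))$ corresponds via $\pi_1^{\acute et}(X_E)\cong\Gal(\bar{E}/E)$ to a finite \emph{separable} extension $\widetilde{E}/E$, which need not be unramified; so describing $\widetilde{E}$ as an unramified extension and $\widetilde{L}$ as its unramified closure is not correct in general, although it does not by itself invalidate the strategy since \Cref{theorem: fargues theorem} holds over any local field.
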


The proof of \Cref{theorem: classification of reductive group schemes introduction} is straightforward, based on the classification of torsors for reductive groups (cf.\ \Cref{theorem: fargues theorem introduction}) and the geometric simply-connectedness of $X_E$ (cf.\ \Cref{theorem: etale fundamental group of the curve}).

However, reductive group schemes over $X_E$ are rather close to being constant (cf.\ \Cref{lemma: reductive group schemes over punctured curve}). For example, they become constant after removing a closed point of $X_E$ (cf.\ \Cref{lemma: reductive group schemes over punctured curve}).

Based on \Cref{theorem: classification of reductive group schemes introduction} we can define for a reductive group scheme $\mathcal{G}$ over $X_E$ a natural candidate of a set $B(\mathcal{G})$ generalizing Kottwitz' definition in the constant case (cf.\ \Cref{definition: b of t} and \Cref{proposition: explicit description of b of g}). Namely, if $\mathcal{G}$ is associated with the group scheme $\mathbb{G}$ we can set $B(\mathcal{G})$ as the set of $\varphi_L$-conjugacy classes in $G(L)$ where $G$ is the reductive group over $L$ associated with the Hopf algebra $\mathcal{O}_{\mathbb{G}}$.\footnote{Later we call this set $B(\mathbb{G})$.}

Our second theorem about reductive group schemes over $X_E$ is then the following classification of torsors.

\begin{theorem}[cf.\ \Cref{theorem: classification of torsors general case}]
\label{theorem: classification of torsors general case introduction}
For a reductive group scheme $\mathcal{G}$ over $X_E$ there is a canonical bijection 
$$
B(\mathcal{G})\cong H^1_{\acute{e}t}(X_E,\mathcal{G}).
$$
\end{theorem}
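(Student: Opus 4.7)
I would build a natural map $\alpha\colon B(\mathbb{G})\to H^1_{\acute{e}t}(X_E,\mathcal{G})$ using the tensor functor $\mathcal{E}$, and prove bijectivity by reducing to the already-established constant case \Cref{theorem: fargues theorem introduction} via inner twisting.

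\textbf{Construction of $\alpha$.} For $[b]\in B(\mathbb{G})=G(L)/\varphi_L-\mathrm{conjugacy}$, the standard Kottwitz procedure produces a $\mathbb{G}$-torsor $\mathbb{P}_b$ in $\varphi-\mathrm{Mod}_L$: the trivial torsor $\mathbb{G}$ equipped with the Frobenius-semilinear action twisted by left multiplication with $b$. Applying $\mathcal{E}$ yields a $\mathcal{G}$-torsor $\mathcal{E}(\mathbb{P}_b)$ on $X_E$; because $\mathcal{E}$ is a tensor functor, $\varphi_L$-conjugate elements produce isomorphic torsors, so the assignment descends to the claimed map $\alpha$.

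\textbf{Reduction to the constant case.} Since $X_E$ is geometrically simply-connected (\Cref{theorem: etale fundamental group of the curve}), every reductive group scheme on $X_E$ is an inner form of a unique constant quasi-split group $(G^{qs})_{X_E}$ with $G^{qs}/E$. Let $c\in H^1_{\acute{e}t}(X_E,(G^{qs}_{\mathrm{ad}})_{X_E})$ classify $\mathcal{G}$ as such an inner twist. By \Cref{theorem: fargues theorem introduction} applied to the constant adjoint group, $c$ is the image of some $[\bar b]\in B(G^{qs}_{\mathrm{ad}})$. Twisting $G^{qs}$ inside $\varphi-\mathrm{Mod}_L$ by $[\bar b]$ produces a reductive group scheme whose image under $\mathcal{E}$ is $\mathcal{G}$, and by uniqueness in \Cref{theorem: classification of reductive group schemes introduction} this inner twist coincides with $\mathbb{G}$. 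Hence we obtain a commutative square
$$
\xymatrix{
B(\mathbb{G}) \ar[r]^-{\alpha} \ar[d] & H^1_{\acute{e}t}(X_E,\mathcal{G}) \ar[d] \\
B(G^{qs}_{\mathrm{ad}}) \ar[r]^-{\cong} & H^1_{\acute{e}t}(X_E,(G^{qs}_{\mathrm{ad}})_{X_E})
}
$$
whose lower row is bijective. Comparing the long exact sequences of pointed sets associated to $1\to Z(\mathbb{G})\to\mathbb{G}\to\mathbb{G}_{\mathrm{ad}}\to 1$ and to its image under $\mathcal{E}$, and using the standard twisting bijection between torsors for inner forms, the bijectivity of $\alpha$ for $\mathbb{G}$ reduces to the analogous statement for the center $Z(\mathbb{G})$.

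\textbf{Main obstacle.} The principal difficulty is therefore bijectivity of $\alpha$ for the (possibly non-constant) group of multiplicative type $Z(\mathbb{G})$ over $\varphi-\mathrm{Mod}_L$. I expect to handle this by d\'evissage: write $Z(\mathbb{G})$ as a Weil restriction of split tori and invoke Shapiro's lemma to reduce to the case $\mathbb{G}_m$, which is covered by \Cref{theorem: fargues theorem introduction}. A short five-lemma-style diagram chase on the commutative square then yields the bijectivity of $\alpha$ in general.
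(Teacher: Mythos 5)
Your reduction breaks down at the final step, and the intermediate reduction is also not justified.

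First, the dévissage is wrong on its face: the center $Z(\mathbb{G})$ of a reductive group scheme is a group of multiplicative type, and for most $\mathbb{G}$ (e.g.\ the forms of $\mathrm{SL}_n$, $\mathrm{Sp}_{2n}$, spin groups) it is a \emph{finite} group scheme such as $\mu_n$, not a torus. It cannot be written as a Weil restriction of split tori, and the proposed Shapiro/$\mathbb{G}_m$ reduction simply does not apply. For a finite multiplicative $Z$ one is instead forced to confront the flat cohomology $H^2_{\mathrm{fl}}(X_E, Z)$, which is \emph{not} zero in general (the paper computes it to be the $p^r$-torsion of $\mathrm{Br}(E)$ after \Cref{theorem: flat cohomology of the curve}), and which has no counterpart on the $B(-)$ side: the definition of $B(\mathcal{T})$ in \Cref{definition: b of t} yields only a single nonabelian ``$H^1$'' (isomorphism classes of exact tensor functors), so there is no third column to compare.

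Second, the five-lemma step itself is not established. Reducing bijectivity for $\mathbb{G}$ to bijectivity for $Z(\mathbb{G})$ would require knowing the bijectivity for $\mathbb{G}_{\mathrm{ad}}$ (which is a \emph{non-constant} pure inner form of $(G^{qs}_{\mathrm{ad}})_{X_E}$, so one would already need the pure-inner-form case of the theorem) \emph{and} controlling the two $H^2$-terms that bound the exact sequences of pointed sets, plus the usual twisting bookkeeping, since the fiber of $H^1(\mathcal{G})\to H^1(\mathcal{G}_{\mathrm{ad}})$ is a quotient of a translate of $H^1(Z)$ rather than $H^1(Z)$ itself. None of this is supplied.

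For comparison, the paper's proof of \Cref{theorem: classification of torsors general case} does not pass through the center at all. It first handles the case where $\mathcal{G}$ is a \emph{pure} inner form of the constant quasi-split $G'$: then $\Rep_E(\mathbb{G})\cong\Rep_E(\mathbb{G}')$ by twisting by the defining element $b$, so admissibility of every fiber functor transfers directly from \Cref{theorem: fargues theorem}, and \Cref{theorem: classification of admissible fiber functors} concludes. The crux for a general inner form is \Cref{lemma: finite extension yields pure inner form}: after a finite extension $E'/E$ (a composition of a totally ramified and a purely inseparable extension), $\mathcal{G}_{X_{E'}}$ becomes a \emph{pure} inner form; then \Cref{lemma: base change of tannakian category} and \Cref{lemma: admissible fiber functors and finite base change} descend admissibility back to $X_E$. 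This finite base-change mechanism is what your proposal is missing, and it is not replaceable by a $Z(\mathbb{G})$-dévissage.
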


The proof of \Cref{theorem: fargues theorem introduction} and \Cref{theorem: classification of torsors general case introduction} is based on an excessive use of the Tannakian formalism of $\Q$-graded and $\Q$-filtered fiber functors, which is an easy adaptation of \cite{ziegler_graded_and_filtered_fiber_functors} (cf.\ \Cref{section: q-filtered fiber functors}, and the classification of vector bundles on $X_E$ (cf.\ \Cref{theorem: classification of vector bundles}), especially the fact that for a semistable vector bundle $\mathcal{E}$ of positive slope the cohomology group 
$$
H^1(X_E,\mathcal{E})=0
$$
vanishes. The crucial input in our proof is the fact that sending a vector bundle to its Harder-Narasimhan filtration defines a fully faithful, unfortunately non-exact, tensor functor
$$
\mathrm{HN}\colon \Bun_{X_E}\to \mathrm{Fil}^\Q\Bun_{X_E}
$$
from ordinary vector bundles on $X_E$ to $\Q$-filtered vector bundles on $X_E$.

Finally, we discuss uniformization results (cf.\ \Cref{section: uniformization}) for $\mathcal{G}$-torsors under arbitrary reductive group schemes $\mathcal{G}$ over $X_E$ generalizing the known case that $G$ is constant, quasi-split over $E$ with $E/\Q_p$ $p$-adic (cf.\ \cite[Th\'eor\`eme 7.2.]{fargues_g_torseurs_en_theorie_de_hodge_p_adique}).

\begin{theorem}[cf.\ \Cref{theorem: uniformization}]
\label{theorem: uniformization introduction} 
Let $\mathcal{G}$ be a reductive group scheme over $X_E$. Then every $\mathcal{G}$-torsor becomes trivial after removing a closed point of $X_E$.  
\end{theorem}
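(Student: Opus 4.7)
My plan is to use the classification of torsors to give an explicit description of $T$, and then verify triviality over $X_E$ minus the canonical point at infinity by solving a $\varphi$-equation in $G(B_{\mathrm{crys}})$.

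By \Cref{theorem: classification of reductive group schemes}, $\mathcal{G}$ is associated to an affine group scheme $\mathbb{G}$ over $\varphi-\mathrm{Mod}_L$, with underlying reductive group $G$ over $L$; and by \Cref{theorem: classification of torsors general case}, $T\cong \mathcal{E}(T_b)$ for some class $[b]\in B(\mathbb{G})$, where $T_b$ is the $\mathbb{G}$-torsor in $\varphi-\mathrm{Mod}_L$ determined by a representative $b\in G(L)$.

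Let $\infty\in X_E$ be the closed point corresponding to the fixed untilt of the residue field of $E$, and set $Y=X_E\setminus\{\infty\}$, so that $Y=\Spec B_e$ in the $p$-adic case, with an analogous description in equal characteristic. By \Cref{lemma: reductive group schemes over punctured curve}, the group scheme $\mathcal{G}|_Y$ is constant. Using the explicit description of the functor $\mathcal{E}$, the $\mathcal{G}|_Y$-torsor $T|_Y$ is identified with the sheaf on $Y$ whose global sections are
$$
\{h\in G(B_{\mathrm{crys}}) \ : \ \varphi(h) = bh\}.
$$
Thus $T|_Y$ is trivial if and only if $b = \varphi(h)h^{-1}$ for some $h\in G(B_{\mathrm{crys}})$.

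To produce such an $h$, I would use the Harder--Narasimhan parabolic of $\mathcal{E}(T_b)$, available via the fully faithful tensor functor $\mathrm{HN}\colon\Bun_{X_E}\to\mathrm{Fil}^\Q\Bun_{X_E}$, to reduce to the case that $\mathcal{E}(T_b)$ is semistable. A further Levi decomposition then reduces the problem to tori, where the required $h$ can be written down explicitly using Fontaine-type elements (for instance $t\in B_{\mathrm{crys}}$ with $\varphi(t)=pt$ in the $p$-adic case, and a formal analog in equal characteristic).

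The main obstacle is this last step: descending from the $\mathrm{GL}_n$-case --- which is immediate from the Bezout property of $B_e$, making every vector bundle $\mathcal{E}(V)|_Y$ free --- to the case of arbitrary reductive $G$. This is the point where the full weight of the $\Q$-filtered fiber functor formalism of \Cref{section: q-filtered fiber functors} enters, via a Tannakian reduction to tori along the canonical HN-parabolic.
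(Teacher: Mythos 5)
Your plan is a genuinely different route from the paper's, and it is closer in spirit to Fargues' original argument in the $p$-adic case than to what is done here. The paper's proof is a two-step reduction: first, \Cref{lemma: reductive group schemes over punctured curve} makes $\mathcal{G}|_{X_E-\{x\}}$ a \emph{constant} quasi-split group $G'$; second --- and this is the step absent from your sketch --- the $G'$-torsor $\mathcal{Q}|_{X_E-\{x\}}$ is \emph{extended} to a $G'$-torsor over all of $X_E$ by Beauville--Laszlo gluing, using Steinberg's theorem over the fraction field of $\widehat{\mathcal{O}_{X_E,x}}$ (which has cohomological dimension $1$). Once extended, the constant quasi-split case (\Cref{theorem: uniformization constant quasi-split case}, resting on \Cref{lemma: reduction to tori in the quasi-split case} and $H^2_{\acute{e}t}(X_E,T)=0$ from \Cref{theorem: second cohomology for tori}) applies directly. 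This extension trick is what lets the paper avoid ever analyzing torsors over the affine curve $Y=X_E\setminus\{x\}$ intrinsically; it reduces everything to torsors over the complete curve, where the $B(G)$-classification and the HN formalism are already available.

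Your sketch has several concrete gaps that the paper's route sidesteps. (i) You fix the distinguished point $\infty$ and work with $B_{\mathrm{crys}}$, but the statement is for an arbitrary closed point $x$; the $\varphi$-equation description in $G(B_{\mathrm{crys}})$ is tied to a particular untilt, so either a homogeneity argument or a local gluing argument would still be needed. (ii) The identification ``$T|_Y$ trivial iff $b=\varphi(h)h^{-1}$ in $G(B_{\mathrm{crys}})$'' is asserted without justification and requires care: the identification $\mathcal{G}|_Y\cong G'\times Y$ is a twist, so the relevant $\varphi$-equation is in a twisted form of $G$, and in equal characteristic the role of $B_{\mathrm{crys}}$ has to be replaced explicitly. (iii) The key reduction --- HN-parabolic to Levi to tori --- is exactly the content of \Cref{lemma: reduction to tori in the quasi-split case} and is not automatic; it uses quasi-splitness of the group and the SGA3 parabolic reduction machinery, and your sketch acknowledges but does not carry it out. (iv) The final torus case is precisely the vanishing $H^1(Y,T)=0$, which the paper obtains cleanly from $H^2_{\acute{e}t}(X_E,T)=0$ and the localization sequence, rather than by hand with Fontaine-type period elements. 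So the strategy is plausible but would, if completed, largely reconstruct \Cref{lemma: reduction to tori in the quasi-split case} and \Cref{theorem: second cohomology for tori}, while still missing the Beauville--Laszlo/Steinberg extension step that makes the paper's proof work uniformly for all closed points.
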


\subsection*{Acknowledgement} The author wants to thank Laurent Fargues, Jochen Heinloth, Michael Rapoport, Peter Scholze and Torsten Wedhorn for answering several questions related to this paper. Especially, the hint of Michael Rapoport to \cite[Theorem 5.3.1.]{dat_orlik_rapoport_period_domains} lead to the full proof of \Cref{theorem: fargues theorem} in the equal characteristic case.  

\section{$\Q$-filtered fiber functors}
\label{section: q-filtered fiber functors}

In this section we want to extend the results of \cite{ziegler_graded_and_filtered_fiber_functors} about $\Z$-filtered fiber functors to $\Q$-filtered fiber functors. Filtrations on functors indexed by $\mathbb{Q}$ are also discussed in \cite{dat_orlik_rapoport_period_domains}, e.g., in Chapter IV.2, or \cite{cornut_filtrations_and_buildings}.

We follow the definitions and notations of \cite[Chapter 2]{ziegler_graded_and_filtered_fiber_functors}, but for us fiber functors will take values in the category of locally free sheaves of finite ranks and not in the category of arbitrary quasi-coherent modules.\footnote{which is not a restriction as fiber functors take their image in vector bundles}

Let $k$ be a field and let $\mathcal{T}$ be a Tannakian category over $k$.
Let $\Gamma$ be a totally ordered abelian group. Mainly, we will be interested in the case that $\Gamma\subseteq\Q$ is a subgroup. 
We start by recalling the definition of a $\Gamma$-graded fiber functor (cf.\ \cite[Chapitre IV.1]{saavedra_rivano_categories_tannakiennes}).
For this we denote by $\mathrm{Gr}^\Gamma\Bun_S$ the tensor category of $\Gamma$-graded vector bundles on a $k$-scheme $S$.
Equivalently, if $D_{\Gamma,S}$ denotes the constant multiplicative group (over $S$) with character group $\Gamma$, then the category $\mathrm{Gr}^\Gamma\Bun_S$ is equivalent to the category of representations of $D_{\Gamma,S}$ over $S$ on locally free sheaves of finite rank.

\begin{definition}
\label{definition: graded fiber functor}
Let $S$ be a scheme over $k$. A $\Gamma$-graded fiber functor of $\mathcal{T}$ over $S$ is an exact tensor functor $$\gamma\colon \mathcal{T}\to \mathrm{Gr}^\Gamma\Bun_S.$$ 
\end{definition}

Equivalently, a $\Gamma$-graded fiber functor on $\mathcal{T}$ over $S$ consists of a usual fiber functor 
$$
\omega\colon \mathcal{T}\to \mathrm{Bun}(S),
$$ 
i.e., $\omega$ is an exact tensor functor, together with a homomorphism $D_{\Gamma,S}\to \Aut^\otimes(\omega)$ of group schemes over $S$ where $\Aut^\otimes(\omega)$ denotes the group scheme of tensor automorphisms of $\omega$ (cf.\ \cite[Chapitre IV.1]{saavedra_rivano_categories_tannakiennes}).
If $S$ is non-empty, then each $\Gamma$-graded fiber functor is automatically faithful (cf.\ \cite[Corollarie 2.10.ii)]{deligne_categories_tannakiennes}).

Now we define $\Gamma$-filtered fiber functors.
We start by defining the category $\mathrm{Fil}^\Gamma\Bun_{S}$ of $\Gamma$-filtered vector bundles on a $k$-scheme $S$.

\begin{definition}
\label{definition: filtered vector bundles}
Let $S$ be a scheme over $k$. A $\Gamma$-filtered vector bundle $\mathcal{E}$ on $S$ is a vector bundle $\mathcal{E}$ on $S$ together with subbundles, i.e., locally direct summands, 
$$
F^\lambda\mathcal{E}\subseteq \mathcal{E}
$$ 
for $\lambda\in \Gamma$ such that 
$$
F^\lambda\mathcal{E}\subseteq F^{\lambda^\prime}\mathcal{E}
$$ 
if $\lambda^\prime\leq \lambda$ and $F^{\lambda}\mathcal{E}=0$ for $\lambda\gg 0$ resp.\ $F^{\lambda}\mathcal{E}=\mathcal{E}$ for $\lambda\ll 0$. Morphisms of $\Gamma$-filtered vector bundles are morphisms 
$$
f\colon \mathcal{E}\to \mathcal{E}^\prime
$$ of vector bundles respecting the subsheaves $F^\lambda\mathcal{E}$, i.e.,\ 
$$
f(F^\lambda\mathcal{E})\subseteq F^\lambda\mathcal{E}^\prime.
$$
The category of $\Gamma$-filtered vector bundles on $S$ is denoted by $\mathrm{Fil}^\Gamma\Bun_{S}$. 
\end{definition}

If $(\mathcal{E},F^\bullet)$ is a $\Gamma$-filtered bundle we denote by $F^{>\lambda}\mathcal{E}$ for $\lambda\in \Gamma$ the vector bundle 
$$
F^{>\lambda}\mathcal{E}:=\sum\limits_{\lambda^\prime >\lambda} F^{\lambda^\prime}\mathcal{E}
$$ of $\mathcal{E}$.
We moreover denote by 
$$
\mathrm{gr}(\mathcal{E}):=\bigoplus_{\lambda\in \Gamma} \mathrm{gr}^\lambda(\mathcal{E})
$$ 
where 
$$
\mathrm{gr}^\lambda( \mathcal{E}):={F^\lambda\mathcal{E}}/{F^{>\lambda}\mathcal{E}}
$$
the associated $\Gamma$-graded vector bundle on $S$. In this way we obtain a functor
$$
\mathrm{gr}\colon \mathrm{Fil}^\Gamma\Bun_{S}\to \mathrm{Gr}^\Gamma\Bun_S.
$$
We note that for a $\Gamma$-filtered vector bundle there are, locally on $S$, only finitely many $\lambda\in \Gamma$ such that 
$$
\mathrm{gr}^\lambda(\mathcal{E})\neq 0.
$$  
If $(\mathcal{E},F^\bullet)$ and $(\mathcal{F},F^\bullet)$ are two $\Gamma$-filtered vector bundles on $S$ then we define their tensor product
$$
(\mathcal{E}\otimes_{\mathcal{O}_S}\mathcal{F},F^\bullet)
$$
by setting 
$$
F^\lambda(\mathcal{E}\otimes_{\mathcal{O}_S}\mathcal{F}):=\sum\limits_{\lambda^\prime+\lambda^{\prime\prime}=\lambda}F^{\lambda^\prime}\mathcal{E}\otimes_{\mathcal{O}_S}F^{\lambda^{\prime\prime}}\mathcal{F}.
$$
Moreover, we make $\mathrm{Fil}^\Gamma\Bun_{S}$ into an exact category (in the sense of Quillen) by requiring that a sequence
$$
0\to \mathcal{E}\to \mathcal{E}^\prime\to \mathcal{E}^{\prime\prime}\to 0
$$ 
is exact if and only if the associated sequence
$$
0\to \mathrm{gr}(\mathcal{E})\to \mathrm{gr}(\mathcal{E}^\prime)\to \mathrm{gr}(\mathcal{E}^{\prime\prime})\to 0
$$
is exact. In particular, the functor 
$$
\mathrm{gr}\colon \mathrm{Fil}^\Gamma\Bun_{S}\to \mathrm{Gr}^\Gamma\Bun_S
$$ 
is then an exact functor. Moreover, $\mathrm{gr}$ is even a tensor functor.
We can also go into the other direction and associate to a $\Gamma$-graded vector bundle a filtered one. Namely, for a $\Gamma$-graded vector bundle 
$$
\mathcal{M}=\bigoplus\limits_{\lambda\in \Gamma}\mathcal{M}^\lambda
$$ 
we can define the $\Gamma$-filtered vector bundle 
$$
(\mathrm{fil}(\mathcal{M}),F^\bullet)
$$ 
by setting 
$$
F^\lambda\mathrm{fil}(\mathcal{M}):=\bigoplus\limits_{\lambda^\prime\geq \lambda}\mathcal{M}^{\lambda^\prime}.
$$ 
In this way we obtain an exact tensor functor 
$$
\mathrm{fil}\colon \mathrm{Gr}^\Gamma\Bun_S\to \mathrm{Fil}^\Gamma\Bun_{S}.
$$

We are now able to define what a $\Gamma$-filtered fiber functor on $\mathcal{T}$ is (cf.\ \cite[Section 4.1]{ziegler_graded_and_filtered_fiber_functors}).

\begin{definition}
\label{definition: filtered fiber functor}
Let $S$ be a scheme over $k$. 
\newline
i) A $\Gamma$-filtered fiber functor of $\mathcal{T}$ over $S$ is an exact tensor functor $$\omega\colon \mathcal{T}\to \mathrm{Fil}^\Gamma\Bun_{S}.$$
ii)
A splitting of a $\Gamma$-filtered fiber functor $\omega\colon \mathcal{T}\to \mathrm{Fil}^\Gamma\Bun_{S}$ is a $\Gamma$-graded fiber functor 
$$
\gamma\colon \mathcal{T}\to \mathrm{Gr}^\Gamma\Bun_S
$$ 
such that $\omega=\mathrm{fil}\circ \gamma$. 
\end{definition}

We remark that if a $\Gamma$-filtered fiber functor $\omega$ admits a splitting then also $\Gamma$-filtered fiber functors isomorphic to $\omega$ do. Moreover, if $\omega$ admits a splitting $\gamma$, then $\gamma\cong \mathrm{gr}\circ \omega$.

Clearly, for every morphism $f\colon S^\prime\to S$ of schemes over $k$ the pullback 
$$
f^\ast\colon \Bun_{S}\to \Bun_{S^\prime}
$$ 
of vector bundles induces exact tensor functors 
$$
f^\ast\colon \mathrm{Fil}^\Gamma\Bun_{S}\to\mathrm{Fil}^\Gamma\Bun_{S^\prime}
$$ 
resp.\ 
$$
f^\ast\colon \mathrm{Gr}^\Gamma\Bun_S\to\mathrm{Gr}^\Gamma\Bun_{S^\prime}.$$ 
Moreover, 
$$
\mathrm{fil}_{S^\prime}\circ f^\ast\cong f^\ast\circ \mathrm{fil}_S\colon \mathrm{Gr}^\Gamma\Bun_S\to \mathrm{Fil}^\Gamma\Bun_{S^\prime}
$$ 
and analogous $\mathrm{gr}_{S^\prime}\circ f^\ast\cong f^\ast\circ \mathrm{gr}_S$.
Let 
$$
\mathrm{\underline{\Hom}}^\otimes(\mathcal{T},\mathrm{Fil}^\Gamma\Bun)
$$ 
resp.\ 
$$
\mathrm{\underline{\Hom}}^\otimes(\mathcal{T},\mathrm{Gr}^\Gamma\Bun)
$$ 
be the fibered categories of $\Gamma$-filtered resp.\ $\Gamma$-graded fiber functors for $\mathcal{T}$. As in \cite[Lemma 4.32, Lemma 3.12.]{ziegler_graded_and_filtered_fiber_functors} it follows that both fibered categories are stacks for the fpqc-topology. Moreover, $\mathrm{fil}$ resp.\ $\mathrm{gr}$ define morphisms
$$
\mathrm{fil}\colon \mathrm{\underline{\Hom}}^\otimes(\mathcal{T},\mathrm{Gr}^\Gamma\Bun)\to \mathrm{\underline{\Hom}}^\otimes(\mathcal{T},\mathrm{Fil}^\Gamma\Bun)
$$
resp.\
$$
\mathrm{gr}\colon \mathrm{\underline{\Hom}}^\otimes(\mathcal{T},\mathrm{Fil}^\Gamma\Bun)\to \mathrm{\underline{\Hom}}^\otimes(\mathcal{T},\mathrm{Gr}^\Gamma\Bun)
$$
of stacks.
Finally, for a $\Gamma$-filtered fiber functor 
$$
\omega\colon \mathcal{T}\to \mathrm{Fil}^\Gamma\Bun_{S}
$$ 
we denote by 
$$
\mathrm{Spl}(\omega)
$$
the functor on $S$-schemes sending $f\colon S^\prime \to S$ to the set of splittings of the $\Gamma$-filtered fiber functor $f^\ast\circ \omega$.
The functor $\mathrm{Spl}(\omega)$ is a sheaf for the fpqc-topology.  

The main result of \cite[Main theorem 4.14]{ziegler_graded_and_filtered_fiber_functors} is the following (in the case we are interested in, i.e., the band of $\mathcal{T}$ is reductive, also cf.\ \cite{cornut_filtrations_and_buildings}).

\begin{theorem}
\label{theorem: splittings of z-filtered fiber functors}
Every $\Z$-filtered fiber functor $\omega\colon \mathcal{T}\to \mathrm{Fil}^\Z\Bun_{S}$ is splittable, i.e.\ admits a splitting, fpqc-locally on $S$.    
\end{theorem}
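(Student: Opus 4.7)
The plan is to follow Ziegler \cite{ziegler_graded_and_filtered_fiber_functors}. The key observation is that $\mathrm{Spl}(\omega)$ is a pseudo-torsor under a natural smooth, unipotent affine group scheme $\mathcal{U}(\omega)$ over $S$, so torsors under $\mathcal{U}(\omega)$ trivialize Zariski-locally and it suffices to produce just one splitting fpqc-locally.

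For the pseudo-torsor structure, I would define $\mathcal{U}(\omega)$ as the subgroup scheme of $\Aut^\otimes(|\omega|)$, where $|\omega|$ denotes the underlying unfiltered fiber functor, consisting of those tensor automorphisms that preserve $F^\bullet$ and act as the identity on $\mathrm{gr}(\omega)$. A filtration argument on the graded pieces of $\mathrm{gr}(\omega)$ shows that $\mathcal{U}(\omega)$ is a successive extension of vector-group schemes, hence smooth and unipotent. Two splittings of $\omega$ differ uniquely by an element of $\mathcal{U}(\omega)$, which gives the pseudo-torsor structure.

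For the existence of a splitting fpqc-locally, I would induct on the (locally finite) number of weights $\lambda\in \Z$ with $\mathrm{gr}^\lambda(\omega)\neq 0$. The base case of a single weight is trivial since $\omega\cong \mathrm{fil}(\mathrm{gr}(\omega))$ tautologically. For the inductive step, let $\lambda_0$ be the smallest weight occurring; then a splitting of $\omega$ decomposes into a splitting of the filtered fiber functor $F^{>\lambda_0}\omega$ (provided by the inductive hypothesis, since it has one fewer weight) together with a tensor section of the short exact sequence
$$
0 \to F^{>\lambda_0}\omega \to \omega \to \mathrm{fil}(\mathrm{gr}^{\lambda_0}\omega) \to 0
$$
in $\mathrm{Fil}^\Z\Bun_S$. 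Such tensor sections form a torsor under the sheaf $\underline{\Hom}^\otimes(\mathrm{gr}^{\lambda_0}\omega, F^{>\lambda_0}\omega)$ on $S$, which is representable by a vector bundle (by Ziegler's representability results) and hence trivializes Zariski-locally.

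The main obstacle is the representability of tensor-Hom sheaves between fiber functors and the tensor compatibility of the inductive construction; these are precisely the technical core of \cite[Sections 4.1-4.3]{ziegler_graded_and_filtered_fiber_functors}. I would treat his formalism as a black box, noting only that the restriction to fiber functors valued in locally free sheaves of finite rank poses no new difficulty, since all the tensor-functorial constructions involved preserve finite rank and local freeness.
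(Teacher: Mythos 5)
The paper itself offers no proof of this statement: it is quoted as \cite[Main Theorem 4.14]{ziegler_graded_and_filtered_fiber_functors}, and the way it is used downstream (see part~viii of \Cref{theorem: results about q-filtered fiber functors}) goes through \cite[Lemma 4.20]{ziegler_graded_and_filtered_fiber_functors}, which asserts that $\mathrm{Spl}(\omega)$ is representable by a scheme \emph{affine and faithfully flat} over $S$; fpqc-local existence of a splitting is then immediate from faithful flatness. Your sketch attempts instead to construct a splitting by a direct induction, and that induction does not work.

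There is no ``smallest weight $\lambda_0$'' to peel off. As the paper itself records, the set $\Gamma_\omega$ of $\lambda\in\Z$ with $\mathrm{gr}^\lambda\omega(X)\neq 0$ for some $X\in\mathcal{T}$ is a \emph{subgroup} of $\Z$: it is closed under addition (tensor products), negation (duals), and contains $0$ (the unit object). Hence $\Gamma_\omega$ is either $\{0\}$ or all of a copy of $\Z$, so an induction on ``the number of weights'' has no well-founded ladder in the nontrivial case. Even if you localize the reasoning to the weights occurring on a fixed tensor generator, the subobject $V\mapsto F^{>\lambda_0}\omega(V)$ is not a tensor functor: by the definition
$$
F^\lambda\bigl(\mathcal{E}\otimes\mathcal{F}\bigr)=\sum_{\lambda'+\lambda''=\lambda}F^{\lambda'}\mathcal{E}\otimes F^{\lambda''}\mathcal{F},
$$
the subsheaf $F^{>\lambda_0}\bigl(\omega(V)\otimes\omega(W)\bigr)$ receives contributions from cross terms such as $F^{\lambda_0}\omega(V)\otimes F^{>0}\omega(W)$ and is strictly larger than $F^{>\lambda_0}\omega(V)\otimes F^{>\lambda_0}\omega(W)$. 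The same mixing of degrees shows the would-be quotient $\mathrm{fil}(\mathrm{gr}^{\lambda_0}\omega)$ fails to be a tensor functor once there is more than one weight. So your short exact sequence is not a sequence of $\Z$-filtered fiber functors, and the ``tensor sections'' you invoke are not controlled by any $\underline{\Hom}^{\otimes}$-sheaf.

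The pseudo-torsor half of your sketch is essentially correct and is indeed part of the framework: $\mathrm{Spl}(\omega)$ is a pseudo-torsor under the pro-unipotent group $U(\omega)$, two splittings differ by a unique section of $U(\omega)$, and over an affine base a $U(\omega)$-torsor is trivial once it is known to be non-empty. But the entire content of the theorem is precisely the non-emptiness of $\mathrm{Spl}(\omega)$ after an fpqc cover, and your induction cannot supply it. Ziegler's route is to establish flatness and affineness of $\mathrm{Spl}(\omega)\to S$ directly; combined with the field case (where one splits by the classical Levi/parabolic argument over an algebraic closure and descends), one gets faithful flatness and hence fpqc-local sections.
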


We want to extend \Cref{theorem: splittings of z-filtered fiber functors} to more general groups $\Gamma$. As in \cite[Definition 4.8.]{ziegler_graded_and_filtered_fiber_functors} we make the following definitions.

\begin{definition}
\label{definition: group sheaves associated to filtered fiber functors}
Let 
$$
\omega\colon \mathcal{T}\to \mathrm{Fil}^\Gamma\Bun_{S}
$$ 
be a $\Gamma$-filtered fiber functor on the Tannakian category $\mathcal{T}$. We define the following group sheaves over $S$.
\newline
i)\ \ $P(\omega):=\Aut^\otimes(\omega)$
\newline
ii)\ $L(\omega):=\Aut^\otimes(\mathrm{gr}\circ \omega)$
\newline
iii) $U(\omega):=\mathrm{Ker}(P(\omega)\to L(\omega))$
\end{definition}

Moreover, we define a canonical filtration 
$$
U_\lambda(\omega)\subseteq P(\omega),\ 0\leq\lambda\in \Gamma,
$$ 
as follows. For $\lambda\in \Gamma, \lambda\geq 0$, we set $U_{\lambda}(\omega)$ as the subgroup of $P(\omega)$ consisting of elements $g\in P(\omega)$ which act trivially on 
$$
F^{\lambda^\prime}\omega(X)/F^{\lambda^\prime+\lambda}\omega(X)
$$ 
for all $\lambda^\prime \in \Gamma$ and $X\in \mathcal{T}$.
 
Clearly, for $\lambda\geq \lambda^\prime$ the group 
$$
U_{\lambda^\prime}(\omega)\subseteq U_{\lambda}(\omega)
$$ 
is normal. We denote by 
$$
U_{>\lambda}(\omega)
$$ 
the union in $U_{\lambda}(\omega)$ of all subgroups $U_{\lambda^\prime}(\omega)$ for $\lambda^\prime>\lambda$ and by 
$$
\mathrm{gr}^\lambda(U(\omega))
$$ 
the quotient 
$$
\mathrm{gr}^\lambda(U(\omega)):=U_{\lambda}(\omega)/U_{>\lambda}(\omega).
$$

We will use the following argument to deduce results for $\Gamma$-filtered fiber functors where $\Gamma\subseteq \Q$ is a subgroup from results about $\Z$-filtered fiber functors.
Let $\Gamma\subseteq \Q$ be a subgroup and let $\omega\colon \mathcal{T}\to \mathrm{Fil}^\Gamma\Bun_{S}$ be a $\Gamma$-filtered fiber functor.
Define
$$
\Gamma_\omega:=\{ \lambda\in \Gamma\ |\ \mathrm{gr}^\lambda \omega(X)\neq 0 \textrm{ for some } X\in \mathcal{T}\}.
$$
Then $\Gamma_\omega\subseteq \Gamma$ is a subgroup as $\mathrm{gr}$ and $\omega$ are tensor functors. If $\mathcal{T}$ has a tensor generator, then (by exactness of $\mathrm{gr}$ and $\omega$) the group $\Gamma_\omega\subseteq \Gamma$ is finitely generated by the $\lambda\in \Gamma$ such that $\mathrm{gr}^\lambda(\omega(X))\neq 0$ for a tensor generator $X\in \mathcal{T}$. Hence, $\Gamma_\omega$ is isomorphic to $\Z$ or $\{0\}$ in this case because $\Gamma_\omega\subseteq \Q$. Moreover, there are fully faithful embeddings
$$
\mathrm{Fil}^{\Gamma_\omega}\Bun_S\to \mathrm{Fil}^\Gamma\Bun_S
$$
resp.\
$$
\mathrm{Gr}^{\Gamma_\omega}\Bun_S\to \mathrm{Gr}^\Gamma\Bun_S
$$
and $\omega$ factors through a $\Gamma_\omega$-filtered fiber functor 
$$
\omega^\prime\colon \mathcal{T}\to \mathrm{Fil}^{\Gamma_\omega}\Bun_S.
$$ 
Moveover, 
$$
P(\omega)\cong P(\omega^\prime), L(\omega)\cong L(\omega^\prime),U(\omega)\cong U(\omega^\prime)
$$ 
etc. Thus all data for $\omega$ is defined by a $\Gamma_\omega$-filtered fiber functor to which we can apply the known results. In particular, we can conclude by \Cref{theorem: splittings of z-filtered fiber functors} that if $\mathcal{T}$ has a tensor generator, then every $\Gamma$-filtered fiber functor admits a splitting, fpqc-locally on $S$.
We record the following theorem collecting results about $\Gamma$-filtered fiber functors.

To state it let $\pi(\mathcal{T})$ be the fundamental group of $\mathcal{T}$ (cf.\ \cite[Definition 8.13]{deligne_categories_tannakiennes}), an affine group scheme in the Tannakian category $\mathcal{T}$ represented by a Hopf algebra $\mathcal{O}_{\pi(\mathcal{T})}$ in Ind-$\mathcal{T}$. For every fiber functor $\omega\colon \mathcal{T}\to \Bun_{S}$ it has the property
$$
\omega(\pi(\mathcal{T}))\cong \Aut^\otimes(\omega)
$$
as affine group schemes over $S$. More precisely, the $\mathcal{O}_S$-Hopf algebra representing $\mathrm{Aut}^\otimes(\omega)$ is isomorphic to the Hopf algebra $\omega(\mathcal{O}_{\pi(\mathcal{T}})$.
If $\mathcal{T}$ admits a tensor generator we define the Lie algebra 
$$
\mathrm{Lie}(\pi(\mathcal{T})):=(I/I^2)^\vee,
$$ 
where $I\subseteq \mathcal{O}_{\pi(\mathcal{T})}$ is the augmentation ideal of $\mathcal{O}_{\pi(\mathcal{T})}$.
If $\mathcal{T}=\Rep_k(G)$ for some affine group scheme $G$ over $k$, then 
$$
\mathcal{O}_{\pi(\mathcal{T})}=\mathcal{O}_G
$$ 
with $G$ acting on $\mathcal{O}_G$ by conjugation and we see that we recover the usual notion of the Lie algebra of $G$ with its adjoint action.

\begin{theorem}
\label{theorem: results about q-filtered fiber functors}
Let $\Gamma\subseteq \Q$ be a subgroup.
Let $\mathcal{T}$ be a Tannakian category over $k$ and let 
$$
\omega\colon \mathcal{T}\to \mathrm{Fil}^\Gamma\Bun_{S}
$$
be a $\Gamma$-filtered fiber functor.
Define 
$$
G:=\Aut^{\otimes}(\mathrm{forg}\circ \omega)
$$ 
as the group scheme over $S$ defined by the usual fiber functor
$$
\mathrm{forg}\circ \omega\colon \mathcal{T}\to \mathrm{Fil}^\Gamma\Bun_{S}\to \Bun_{S}
$$
of $\mathcal{T}$ over $S$.
Let $\pi(\mathcal{T})$ be the fundamental group of $\mathcal{T}$.
Then
\newline
i) For $\lambda\in \Gamma$, $\lambda\geq 0$, the group sheaf $U_{\lambda}(\omega)$ is representable by a group scheme, affine and faithfully flat over $S$.
\newline
ii) The affine group schemes $U(\omega)$ and $U_{\lambda}(\omega)$ for $\lambda>0$ are pro-unipotent and for $\lambda>0$ the group $\mathrm{gr}^\lambda(U(\omega))$ is abelian and pro-unipotent.
\newline
iii) $P(\omega)/U(\omega)\cong L(\omega)$
\newline
iv) If $\mathcal{T}$ admits a tensor generator, then for $\lambda\geq 0$ 
$$
\mathrm{Lie}(U_{\lambda}(\omega))\cong F^\lambda(\omega(\mathrm{Lie}(\pi(\mathcal{T}))))
$$
and
$$
\mathrm{Lie}(\mathrm{gr}^\lambda(U(\omega)))\cong \mathrm{gr}^\lambda(\omega(\mathrm{Lie}(\pi(\mathcal{T}))))
$$
where $\mathrm{Lie}(\pi(\mathcal{T}))$ is the Lie algebra of the fundamental group $\pi(\mathcal{T})$ of $\mathcal{T}$.
\newline
v) If $G$ is of finite presentation over $S$ (or equivalently, if $\mathcal{T}$ has a tensor generator), then 
$$
P(\omega),L(\omega), U(\omega), U_{\lambda}(\omega), \mathrm{gr}^\lambda(U(\omega))
$$ 
for $\lambda\geq 0$ are of finite presentation.
\newline
vi) If $G$ is smooth over $S$, then $P(\omega)$, $U(\omega)$, $U_{\lambda}(\omega)$ and $\mathrm{gr}^\lambda(U(\omega))$ for $\lambda\geq 0$ are smooth.
\newline
vii) If $G$ is reductive over $S$, then $P(\omega)$ is a parabolic subgroup scheme of $G$ and $U(\omega)$ is its unipotent radical. For $\lambda>0$ the groups 
$$
\mathrm{gr}^\lambda(U(\omega))
$$ 
are vector bundles, isomorphic to 
$$
\mathrm{Lie}(\mathrm{gr}^\lambda(U(\omega))).
$$
\newline
viii) The sheaf $\mathrm{Spl}(\omega)$ of splittings of $\omega$ is an $U(\omega)$-torsor over $S$ with respect to the fpqc-topology, in particular, $\omega$ admits a splitting fpqc-locally on $S$ and $\mathrm{Spl}(\omega)$ is represented by a scheme affine and faithfully flat over $S$.
\newline
ix) If 
$$
\gamma:=\mathrm{gr}\circ \omega\colon \mathcal{T}\to \mathrm{Gr}^\Gamma\Bun_S
$$ 
is the associated $\Gamma$-graded fiber functor of $\omega$, then for $\lambda\geq 0$
$$
U_\lambda(\omega)\cong U_\lambda(\mathrm{fil}\circ \gamma).
$$   
\end{theorem}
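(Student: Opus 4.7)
The plan is to reduce the whole list to Ziegler's Main Theorem 4.14 (quoted here as Theorem \ref{theorem: splittings of z-filtered fiber functors}) and the surrounding results of \cite{ziegler_graded_and_filtered_fiber_functors} on $\Z$-filtered fiber functors, by two successive dévissages. First, I would pass to the tensor-generated setting: write $\mathcal{T}=\varinjlim \mathcal{T}_\alpha$ as a filtered colimit of Tannakian subcategories each admitting a tensor generator, and restrict $\omega$ to each $\mathcal{T}_\alpha$ to obtain a $\Gamma$-filtered fiber functor $\omega_\alpha$. Then $P(\omega), L(\omega), U(\omega), U_\lambda(\omega)$ and $\mathrm{gr}^\lambda(U(\omega))$ are cofiltered limits of the corresponding group sheaves for the $\omega_\alpha$, so representability, affineness, faithful flatness, pro-unipotence, and the short exact sequence (iii) propagate from the tensor-generated case to the general one. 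Only the finite-presentation and smoothness statements (v), (vi) require $\mathcal{T}$ to have a tensor generator, where this first reduction is unnecessary.

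Second, once $\mathcal{T}$ admits a tensor generator $X$, exactness and compatibility with tensor products force
$$
\Gamma_\omega=\{\lambda\in\Gamma\mid \mathrm{gr}^\lambda\omega(Y)\neq 0\text{ for some }Y\in\mathcal{T}\}
$$
to be generated by the finitely many $\lambda$ for which $\mathrm{gr}^\lambda\omega(X)\neq 0$, and hence cyclic as a finitely generated subgroup of $\Q$. Choosing a generator identifies $\Gamma_\omega$ with $\Z$ (or with $\{0\}$), and the fully faithful embeddings $\mathrm{Fil}^{\Gamma_\omega}\Bun_S\hookrightarrow \mathrm{Fil}^\Gamma\Bun_S$ and $\mathrm{Gr}^{\Gamma_\omega}\Bun_S\hookrightarrow\mathrm{Gr}^\Gamma\Bun_S$ carry all the associated group sheaves to those of a genuine $\Z$-filtered fiber functor. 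Parts (i), (ii), (iv), (v), (vi), (viii), (ix) now follow directly from the $\Z$-filtered statements of \cite[Sections 3--4]{ziegler_graded_and_filtered_fiber_functors}; part (iii) is a tautology from the definition of $U(\omega)$ as a kernel, combined with surjectivity of $P(\omega)\to L(\omega)$, which is ensured fpqc-locally by (viii).

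Part (vii) is the substantive one. Fpqc-locally on $S$ a splitting $\gamma$ of $\omega$ exists by (viii), giving a homomorphism $D_{\Gamma_\omega,S}\to G$, i.e., after the rescaling $\Gamma_\omega\cong\Z$, a cocharacter $\mathbb{G}_{m,S}\to G$. For $G$ reductive, the parabolic and Levi subgroups attached to such a cocharacter by the standard attractor and fixed-point theory of reductive group schemes over a base coincide tautologically with $P(\omega)$ and $L(\omega)$, and the unipotent radical with $U(\omega)$; the filtration $U_\lambda(\omega)$ then becomes the weight filtration on the unipotent radical, whose graded pieces are vector groups canonically isomorphic to their Lie algebras via the weight decomposition of $\mathrm{Lie}(G)$. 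The identification descends from the fpqc cover to $S$ since $P(\omega), L(\omega), U(\omega)$ are defined intrinsically from $\omega$.

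The main obstacle will be part (vii): one must match the intrinsically Tannakian filtration $U_\lambda(\omega)$ with the geometric filtration on the unipotent radical of the parabolic attached to a (only locally existing) cocharacter of $G$, and verify that the graded pieces are vector bundles before any splitting is chosen. Smoothness in (vi) feeds into (vii), and conversely (vii) pins down the shape of the graded pieces.
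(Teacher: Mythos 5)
Your proof follows the same two-step dévissage as the paper: reduce to a tensor-generated Tannakian subcategory (via the filtered-colimit argument), then reduce the $\Gamma$-filtered data to the $\Z$-filtered case via the cyclic subgroup $\Gamma_\omega$, and finally invoke Ziegler's $\Z$-filtered results (Lemma 4.20, 4.21, 4.23, 4.40, Proposition 4.25, Main Theorem 4.14). The only cosmetic difference is that the paper cites Saavedra-Rivano's Proposition IV.2.1.4.1 for (iv) and (v) rather than attributing these to Ziegler, and your expanded cocharacter-theoretic description of (vii) is exactly the content of the Ziegler lemmas the paper cites; the underlying mathematics is identical.
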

\begin{proof}
i) If $\mathcal{T}$ admits a tensor generator this follows from \cite[Lemma 4.20]{ziegler_graded_and_filtered_fiber_functors} and the discussion preceeding this theorem. In the general case, writing $\mathcal{T}$ as a union of sub-Tannakian categories admitting a tensor generator, shows that $U_{\lambda}(\omega)$ is an inverse limit of schemes, affine and faithfully flat over $S$ and hence itself affine and faithfully flat over $S$.
\newline
ii) The pro-unipotence of $U(\omega)$ and $U_{\lambda}(\omega)$ for $\lambda>0$ follows after taking the limit from the case that $\mathcal{T}$ has a tensor generator, say $X\in \mathcal{T}$. Then $U(\omega)$ resp.\ $U_{\lambda}(\omega)$ embeds into $\mathrm{GL}(\omega(X))$ as a subgroup of the upper triangular matrices, showing the unipotence. As in \cite[Lemma 4.21]{ziegler_graded_and_filtered_fiber_functors} it follows that $\mathrm{gr}^\lambda(U(\omega))$ is abelian. As $\mathrm{gr}^\lambda(U(\omega))$ is a quotient of $U_{\lambda}(\omega)$ it is also pro-unipotent.
\newline
iii) This follows as in \cite[Lemma 4.23]{ziegler_graded_and_filtered_fiber_functors} from viii).
\newline
iv) This follows from \cite[Proposition IV.2.1.4.1]{saavedra_rivano_categories_tannakiennes}.
\newline
v) This follows from \cite[Proposition IV.2.1.4.1]{saavedra_rivano_categories_tannakiennes}. 
\newline
vi) This follows from \cite[Lemma 4.20]{ziegler_graded_and_filtered_fiber_functors}.
\newline
vii) This follows from \cite[Lemma 4.40]{ziegler_graded_and_filtered_fiber_functors} and \cite[Proposition 4.25]{ziegler_graded_and_filtered_fiber_functors}.
\newline
viii) By \Cref{theorem: splittings of z-filtered fiber functors}, and the discussion preceeding this theorem, the statement is known if $\mathcal{T}$ admits a tensor generator. Moreover, the sheaf $\mathrm{Spl}(\omega)$ is represented by a scheme, affine and faithfully flat over $S$ by \cite[Lemma 4.20]{ziegler_graded_and_filtered_fiber_functors}. The general case follows by the usual limit argument. 
\newline
ix) For $\lambda\geq 0$ there exists a canonical map
$$
U_{\lambda}(\omega)\to U_{\lambda}(\mathrm{fil}\circ \gamma)
$$
and thus the statement that this is an isomorphism is fpqc-local on $S$ and we may assume that 
$$
\omega\cong \mathrm{fil}\circ \gamma
$$ 
is split in which case the statement is trivial.
\end{proof}

\section{Admissible fiber functors over the Fargues-Fontaine curve}
\label{section: admissible fiber functors}

Fix a local field $E$ with residue field $\F_q$, i.e., either $E\cong \F_q((t))$ or $E$ is a finite extension of $\Q_p$. Moreover, let $F/\bar{\F}_q$ be a perfectoid algebraically closed extension of an algebraic closure $\bar{\F}_q$ of $\F_q$. Let $X_{E,F}$ be the schematic Fargues-Fontaine curve associated with $E$ and $F$ (cf.\ \cite[Chapter 1]{fargues_geometrization_of_the_local_langlands_correspondence_overview}).
In the notation we will omit the field $F$ and we will just write $X_E$ for $X_{E,F}$.
Our results will not depend on the field $F$ (assuming that it is algebraically closed).

Let $\mathcal{T}$ be a Tannakian category over $E$. The basic example will be the category of representations $\Rep_E(G)$ of an affine group scheme $G$ over $E$. We record the following lemma from the Tannakian formalism (cf.\ \cite{deligne_categories_tannakiennes}).

\begin{lemma}
\label{lemma: tannakian description of torsors}
Let $k$ be a field and let $G/k$ be an affine group scheme. Then for every scheme $S$ over $k$ the groupoid of fiber functors
$$
\omega\colon \Rep_k(G)\to \Bun_{S}
$$ 
is equivalent to the groupoid of $G$-torsors over $S$ for the fpqc-topology. If $G$ is locally of finite presentation, the same holds with ``fpqc'' replaced by ``fppf''. If $G$ is locally smooth, the same holds with ``fpqc'' replaced by ``\'etale''.
\end{lemma}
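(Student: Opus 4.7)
The plan is to exhibit the equivalence via two explicit, mutually inverse constructions and to locate the nontrivial content in Deligne's Tannakian reconstruction theorem (cf.\ \cite{deligne_categories_tannakiennes}).

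In one direction, for a $G$-torsor $P\to S$, I set $\omega_P\colon \Rep_k(G)\to\Bun_S$, $V\mapsto P\times^G V$, the associated bundle. Since $P$ is fpqc-locally trivial, $\omega_P(V)$ is fpqc-locally isomorphic to $V\otimes_k\mathcal{O}_S$, hence locally free of the same rank as $V$; exactness and the tensor structure can be checked fpqc-locally, where they reduce to the evident properties of the trivial fiber functor $\omega_0\colon V\mapsto V\otimes_k\mathcal{O}_S$. In the other direction, given a fiber functor $\omega\colon \Rep_k(G)\to\Bun_S$, I form the presheaf on $S$-schemes
$$
T\mapsto \mathrm{Isom}^\otimes(\omega_0|_T,\omega|_T)
$$
of tensor isomorphisms. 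The fundamental theorem of Tannakian duality over $k$ gives $\Aut^\otimes(\omega_0)\cong G_S$, so this sheaf carries a natural right $G_S$-action, and I send $\omega$ to it.

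The main obstacle is to show that $\mathrm{Isom}^\otimes(\omega_0,\omega)$ is representable by a $G$-torsor, equivalently that it is nonempty fpqc-locally on $S$; this is precisely Deligne's theorem that every fiber functor on $\Rep_k(G)$ with values in $\Bun_S$ is fpqc-locally isomorphic to $\omega_0$. Granting this, representability reduces to the case where $\Rep_k(G)$ admits a tensor generator $V$, in which case $\mathrm{Isom}^\otimes(\omega_0,\omega)$ sits as a closed subscheme of the affine $S$-scheme $\mathrm{Isom}(V\otimes_k\mathcal{O}_S,\omega(V))$ cut out by the tensor compatibilities; the general case follows by passage to the limit over the finite-type sub-Tannakian subcategories. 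The existence of fpqc-local sections together with the free transitive $G_S$-action then forces faithful flatness, making the sheaf a $G$-torsor in the fpqc sense.

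For the refinements: if $G$ is locally of finite presentation over $k$, so is $G_S$, and the torsor $\mathrm{Isom}^\otimes(\omega_0,\omega)\to S$ inherits local finite presentation, so fpqc-local triviality upgrades to fppf-local triviality by standard descent; if $G$ is locally smooth, the torsor is smooth over $S$ and therefore admits sections \'etale-locally. Finally, the two constructions are mutually inverse essentially tautologically: the canonical map $P\to \mathrm{Isom}^\otimes(\omega_0,\omega_P)$ becomes an isomorphism after pullback along $P\to S$ and hence is an isomorphism, and conversely the evaluation map $\mathrm{Isom}^\otimes(\omega_0,\omega)\times^G V \to \omega(V)$ is an isomorphism for every $V\in\Rep_k(G)$.
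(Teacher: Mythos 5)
The paper does not supply a proof of this lemma; it simply records it as a known consequence of the Tannakian formalism with a citation to Deligne's \emph{Cat\'egories tannakiennes}. Your argument is correct and is exactly the standard one that the cited reference carries out: the associated-bundle construction $P\mapsto\omega_P$ in one direction, the sheaf $\mathrm{Isom}^\otimes(\omega_0,\omega)$ in the other, with the nontrivial input being Deligne's theorem that any two fiber functors on a Tannakian category are fpqc-locally isomorphic (together with the representability and faithful flatness of the $\mathrm{Isom}^\otimes$-scheme). Your upgrades to the fppf and \'etale topologies via descent of finite presentation, respectively smoothness, of the torsor are also the standard ones.
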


The following lemma is very important for our proof of the classification of the later defined admissible fiber functors 
$$
\omega\colon \mathcal{T}\to \mathrm{Bun}_{X_E}
$$
of $\mathcal{T}$ over $X_E$.
First we recall the Harder-Narasimhan filtration for a vector bundle $\mathcal{E}$ on $X_E$ (cf.\ \cite[Section 8.2.4.]{fargues_fontaine_courbe_et_fibres_vectories_en_theorie_de_hodge_p_adique} resp.\ \cite[Corollary 11.7.]{hartl_pink_vector_bundles_with_frobenius_structure}). Namely, there exists a canonical $\Q$-filtration $\mathrm{HN}^\bullet(\mathcal{E})$ on $\mathcal{E}\in \mathrm{Bun}_{X_E}$ by subbundles 
$$
\mathrm{HN}^\lambda(\mathcal{E}), \lambda\in \Q,
$$
such that for $\lambda\in \Q$ the graded piece 
$$
\mathrm{gr}^\lambda(\mathcal{E})=\mathrm{HN}^\lambda(\mathcal{E})/{\mathrm{HN}^{>\lambda}(\mathrm{E}})
$$ 
is semistable of slope $\lambda$.

\begin{lemma}
\label{lemma: harder-narasimhan tensor functor}
Sending a vector bundle $\mathcal{E}$ to the $\Q$-filtered vector bundle 
$$
(\mathcal{E},\mathrm{HN}(\mathcal{E}))
$$
 defines a fully faithful tensor functor
$$
\mathrm{HN}\colon \mathrm{Bun}_{X_E}\to \mathrm{Fil}^\Q\Bun_{X_E}.
$$
\end{lemma}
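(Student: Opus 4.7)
My plan is to verify three things for the assignment $\mathcal{E} \mapsto (\mathcal{E}, \mathrm{HN}(\mathcal{E}))$: that it lands in $\mathrm{Fil}^{\mathbb{Q}}\mathrm{Bun}_{X_E}$, that it is fully faithful, and that it is compatible with tensor products. The first point is immediate from the recalled properties of the Harder-Narasimhan filtration on $X_E$, which is a canonical decreasing $\mathbb{Q}$-filtration by subbundles of $\mathcal{E}$ satisfying the boundedness conditions of \Cref{definition: filtered vector bundles}.

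For full faithfulness, the content is that every morphism $f\colon \mathcal{E}\to \mathcal{F}$ of vector bundles on $X_E$ automatically satisfies $f(\mathrm{HN}^\lambda\mathcal{E})\subseteq \mathrm{HN}^\lambda\mathcal{F}$ for all $\lambda\in\mathbb{Q}$. I would reduce this to showing that the composite
\[
\mathrm{HN}^\lambda\mathcal{E} \hookrightarrow \mathcal{E} \xrightarrow{f} \mathcal{F} \twoheadrightarrow \mathcal{F}/\mathrm{HN}^\lambda\mathcal{F}
\]
vanishes, where the source has all HN slopes $\geq \lambda$ and the target has all HN slopes $< \lambda$. Invoking the classification of vector bundles on $X_E$ (cf.\ \Cref{theorem: classification of vector bundles}), both bundles decompose as finite direct sums of the elementary bundles $\mathcal{O}_{X_E}(\mu)$, and the vanishing then reduces to the fact that $\mathrm{Hom}(\mathcal{O}_{X_E}(\alpha),\mathcal{O}_{X_E}(\beta)) = 0$ whenever $\alpha > \beta$, which is a standard consequence of the same classification via the vanishing $H^0(X_E,\mathcal{O}_{X_E}(\nu))=0$ for $\nu<0$.

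For compatibility with tensor products, I would identify the tensor-product filtration
\[
F^\lambda(\mathcal{E}\otimes\mathcal{F}):=\sum_{\lambda'+\lambda''=\lambda} \mathrm{HN}^{\lambda'}(\mathcal{E})\otimes \mathrm{HN}^{\lambda''}(\mathcal{F})
\]
with $\mathrm{HN}^\lambda(\mathcal{E}\otimes\mathcal{F})$. The essential input is the fact that on the Fargues-Fontaine curve the tensor product of two semistable bundles is again semistable with slope the sum of the individual slopes, which is a classical consequence of the classification. Granting this, each graded piece of $F^\bullet$ is a direct sum of terms $\mathrm{gr}^{\lambda'}(\mathcal{E})\otimes \mathrm{gr}^{\lambda''}(\mathcal{F})$ with $\lambda'+\lambda''=\lambda$, hence semistable of slope $\lambda$; by uniqueness of the HN filtration, $F^\bullet$ must coincide with $\mathrm{HN}^\bullet(\mathcal{E}\otimes\mathcal{F})$. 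The only nontrivial obstacle is the semistability of tensor products, but since this is already available from \Cref{theorem: classification of vector bundles}, the whole proof is a short application of the structure theorem for vector bundles on $X_E$.
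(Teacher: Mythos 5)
Your proposal is correct and follows essentially the same route as the paper: both parts (full faithfulness and compatibility with tensor products) rest on the same two facts, namely that morphisms of vector bundles automatically respect HN filtrations and that tensor products of semistable bundles on $X_E$ are semistable with additive slopes, the latter coming from the classification. The only slight difference is that the paper simply asserts the first fact — it is a formal consequence of the general HN formalism, since there are no nonzero maps from a semistable bundle of slope $\mu$ to one of slope $\nu<\mu$ — whereas you deduce it from the full classification theorem via the vanishing of $H^0(X_E,\mathcal{O}_{X_E}(\nu))$ for $\nu<0$. That is correct but uses heavier machinery than needed; the paper's assertion is purely formal and does not require the classification.
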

\begin{proof}
The Harder-Narasimhan filtration is preserved by every morphism of vector bundles. Hence the functor $\mathrm{HN}$ is well-defined and fully faithful.
The classification of vector bundles on $X_E$ (cf.\ \cite[Chapter 8]{fargues_fontaine_courbe_et_fibres_vectories_en_theorie_de_hodge_p_adique}, \cite{hartl_pink_vector_bundles_with_frobenius_structure} resp.\ \Cref{theorem: classification of vector bundles}) implies that the tensor product of two semistable vector bundles of slope $\lambda$ resp.\ $\mu$ is again semistable of slope $\lambda+\mu$. This implies that $\mathrm{HN}$ is moreover a tensor functor. 
\end{proof}

However, note that the functor $\mathrm{HN}$ is \textit{not} exact, because semistable vector bundles are successive extensions of line bundles which are possibly of different slopes.
Therefore we make the following definition.\footnote{In \cite{saavedra_rivano_categories_tannakiennes} Saavedra-Rivano calls filtered fiber functors admissible if they are fpqc-locally splittable. By \cite{ziegler_graded_and_filtered_fiber_functors} this notion is obsolete and thus we think that our terminology is not very confusing. Also our admissible fiber functors are not equipped with a filtration as would be the case in Saavedra-Rivano's notation.}

\begin{definition}
\label{definition: admissible fiber functor}
A fiber functor $\omega\colon \mathcal{T}\to \mathrm{Bun}_{X_E}$ is called admissible if the composition
$$
\mathcal{T}\xrightarrow{\omega}\mathrm{Bun}_{X_E}\xrightarrow{\mathrm{HN}}\mathrm{Fil}^\Q\Bun_{X_E}
$$ 
is again exact, i.e., a $\Q$-filtered fiber functor.
\end{definition}

For example, if $\mathrm{char}(E)=0$ and $\mathcal{T}\cong \Rep_E(G)$ for a reductive group $G$ over $E$, then every fiber functor $\omega\colon \mathcal{T}\to \mathrm{Bun}_{X_E}$ is admissible as $\mathcal{T}$ is semisimple in this case.

Admissibility is a convenient notion as it can be checked after base change along a finite field extension $E^\prime/E$.

\begin{lemma}
\label{lemma: admissible fiber functors and finite base change}
Let $E^\prime/E$ be a finite field extension and let $\omega\colon \mathcal{T}\to \mathrm{Bun}_{X_E}$ be a fiber functor. Then $\omega$ is admissible if and only if the composition 
$$
\mathcal{T}\xrightarrow{\omega} \mathrm{Bun}_{X_E}\xrightarrow{f^\ast} \mathrm{Bun}_{X_{E^\prime}}
$$ 
is admissible where $f\colon X_{E^\prime}\cong X_E\otimes_E E^\prime\to X_E$ is the canonical morphism.   
\end{lemma}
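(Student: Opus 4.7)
The approach is to reduce the equivalence to the compatibility of the Harder--Narasimhan filtration with pullback along $f\colon X_{E^\prime}\to X_E$, combined with faithful flatness of $f$. First I would note that $f$ is a finite locally free morphism of degree $d=[E^\prime:E]$, so that $f^\ast$ is a faithfully exact tensor functor $\mathrm{Bun}_{X_E}\to \mathrm{Bun}_{X_{E^\prime}}$ preserving ranks and multiplying degrees by $d$.

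The central claim to establish is that there is a canonical isomorphism $\mathrm{HN}(f^\ast\mathcal{E})\cong f^\ast\mathrm{HN}(\mathcal{E})$ of $\Q$-filtered bundles, up to re-indexing the filtration by the factor $d$; this re-indexing is an exact auto-equivalence of $\mathrm{Fil}^\Q\mathrm{Bun}$ and hence irrelevant for exactness questions. Granting this, the pullback of the HN filtration of $\mathcal{E}$ is a filtration of $f^\ast\mathcal{E}$ with graded pieces of the predicted slopes, so the claim reduces to $f^\ast$ both preserving and reflecting semistability of vector bundles. Reflection is immediate, since a destabilizing subbundle $\mathcal{F}\subseteq\mathcal{E}$ pulls back to a destabilizing subbundle of $f^\ast\mathcal{E}$ of the same rank and $d$ times the degree. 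For preservation I would enlarge $E^\prime$ to its Galois closure over $E$ so that $f$ becomes Galois with group $\Sigma$; then the HN filtration of $f^\ast\mathcal{E}$ is $\Sigma$-equivariant by canonicity, and Galois descent along $f$ shows that the maximal destabilizing subbundle of $f^\ast\mathcal{E}$ is the pullback of a subbundle of $\mathcal{E}$, which must be all of $\mathcal{E}$ when $\mathcal{E}$ is semistable.

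Granting this compatibility, the lemma is formal. By \Cref{definition: filtered vector bundles} a sequence in $\mathrm{Fil}^\Q\mathrm{Bun}_{X_E}$ is exact if and only if its associated graded is exact as a sequence of ordinary vector bundles, and such exactness is detected by the faithfully exact pullback $f^\ast$. Combined with $\mathrm{HN}\circ f^\ast\cong f^\ast\circ \mathrm{HN}$ up to re-indexing, this yields immediately that $\mathrm{HN}\circ\omega$ is exact precisely when $\mathrm{HN}\circ f^\ast\circ\omega$ is exact, which is the asserted equivalence.

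The main obstacle I anticipate is the Galois-descent step, i.e.\ verifying cleanly that $f^\ast$ preserves semistability; once this compatibility is in place, the rest of the proof is routine manipulation of exact tensor functors and the re-indexing symmetry of $\mathrm{Fil}^\Q$.
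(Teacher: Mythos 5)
Your argument correctly identifies the key reduction — compatibility of the Harder--Narasimhan filtration with $f^\ast$, up to rescaling the index by $d=[E^\prime:E]$ — and your treatment of the separable case matches the paper's: reflection of semistability is elementary, and preservation is obtained by enlarging to the Galois closure and descending the canonical HN filtration. However, there is a genuine gap: the lemma is stated for an arbitrary finite extension $E^\prime/E$, and when $E=\F_q((t))$ such an extension can have a nontrivial purely inseparable part, for which no Galois closure exists. Your Galois-descent step therefore does not cover the general case.

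The paper closes this gap with a separate argument for $E^\prime/E$ purely inseparable: it reduces to $\mathcal{E}$ a simple semistable bundle, writes $\mathcal{E}\cong\pi_\ast\mathcal{O}_{X_{E_h}}(d)$ for $\pi\colon X_{E_h}\to X_E$ the cover coming from the unramified extension $E_h/E$ of degree $h$, and observes that $E^\prime\otimes_E E_h$ remains a field (the degree-$h$ unramified extension of $E^\prime$) because $E^\prime/E$ is purely inseparable while $E_h/E$ is unramified, hence separable. Consequently $f^\ast\pi_\ast\mathcal{O}_{X_{E_h}}(d)\cong\pi^\prime_\ast\mathcal{O}_{X_{E^\prime_h}}(hd)$ is again semistable. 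You would need to add this (or an equivalent) argument, and then reduce the general finite extension to the composite of a separable and a purely inseparable extension, to make the proof complete.
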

\begin{proof}
Because $f$ is faithfully flat it suffices to show that the diagram
$$
\xymatrix{
\mathrm{Bun}_{X_{E}}\ar[r]^-{\mathrm{HN}}\ar[d]^{f^\ast} & \mathrm{Fil}^\Q\Bun_{X_E}\ar[d]^{f^\ast}\\
\mathrm{Bun}_{X_{E^\prime}}\ar[r]^-{\mathrm{HN}} & \mathrm{Fil}^\Q\Bun_{X_{E^\prime}}
}
$$
commutes, i.e., if $\mathcal{E}$ is a vector bundle on $X_E$, then the pullback of the Harder-Narasimhan filtration of $\mathcal{E}$ on $X_E$ is the Harder-Narasimhan filtration of $f^\ast\mathcal{E}$ on $X_{E^\prime}$.
We assume first that $E^\prime$ over $E$ is separable.
 Let $\mathcal{E}$ be a vector bundle on $X_E$. If $\mu$ is the slope of $\mathcal{E}$, then $[E^\prime:E]\mu$ is the slope of $f^\ast\mathcal{E}$ because $f^\ast$ preserves ranks, but multiplies degrees by $[E^\prime:E]$. In particular, if $f^\ast\mathcal{E}$ is semistable, then $\mathcal{E}$ is semistable, because for a destabilizing subbundle $\mathcal{F}$ of $\mathcal{E}$ the pullback $f^\ast\mathcal{F}$ would again be destabilizing.
This implies that it suffices to prove the claim in the case that $E^\prime/E$ is Galois. Then the Harder-Narasimhan filtration of $f^\ast\mathcal{E}$ must be stable under the Galois action and hence descends to $X_E$. Moreover, this descended filtration must be the Harder-Narasimhan filtration of $\mathcal{E}$ as its graded pieces are semistable of strictly decreasing slopes because this holds after pullback to $X_{E^\prime}$. In particular, the pullback of the Harder-Narasimhan filtration of $\mathcal{E}$ is the Harder-Narasimhan filtration of $f^\ast\mathcal{E}$. 

Now assume that $E^\prime/E$ is purely inseparable. Again it is clear that a vector bundle $\mathcal{E}\in \Bun_{X_E}$ is semistable if $f^\ast\mathcal{E}$ is semistable. Conversely, let $\mathcal{E}\in \Bun_{X_E}$ be a semistable vector bundle. We want to show that $f^\ast\mathcal{E}$ is semistable. For this we may assume that $\mathcal{E}$ is simple. In this case there exists $d\in \Z$ and $d\geq 0$, s.t.
$$
\mathcal{E}\cong \pi_\ast\mathcal{O}_{X_{E_h}}(d)
$$ 
where $E_h$ is an unramified extension of $E$ of degree $h$ and
$$
\pi\colon X_{E_h}\to X_E
$$  
is the canonical morphism. As $E^\prime/E$ is purely inseparable, $E^\prime\otimes_E E_h$ is isomorphic to the unramified extension $E^\prime_h$ of $E^\prime$ of degree $h$. In particular, 
$$
f^\ast\pi_\ast(\mathcal{O}_{X_{E_h}}(d))\cong \pi^\prime_\ast(\mathcal{O}_{X_{E^\prime_h}}(hd)),
$$
where $\pi^\prime\colon X_{E^\prime_h}\to X_{E^\prime}$ is the projection,
is again semistable.
\end{proof}

Let 
$$
L:=\widehat{E^\mathrm{un}}
$$ 
be the completion of the maximal unramified extension of $E$ and let 
$$
\varphi-\mathrm{Mod}_L
$$ 
be the category of isocrystals over $L$. As we have fixed an embedding $\bar{\F}_q\subseteq F$ we obtain a canonical exact tensor functor
$$
\mathcal{E}(-)\colon \varphi-\mathrm{Mod}_L\to \Bun_{X_E}
$$
(cf.\ \cite[Section 8.2.3.]{fargues_fontaine_courbe_et_fibres_vectories_en_theorie_de_hodge_p_adique} if $E/\Q_p$ resp.\ \cite[Chapter 8]{hartl_pink_vector_bundles_with_frobenius_structure} if $E=\F_q((t))$).

We recall properties of this functor.

\begin{theorem}
\label{theorem: classification of vector bundles}
The functor 
$$
\mathcal{E}(-)\colon \varphi-\mathrm{Mod}_L\to \Bun_{X_E}  
$$
is an exact faithful tensor functor inducing a bijection on isomorphism classes.
Moreover, for $\lambda\in \Q$ it induces an equivalence between semistable isocrystals of slope $-\lambda$ with semistable vector bundles of slope $\lambda$.
\end{theorem}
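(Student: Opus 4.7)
The plan is to reduce everything to the Dieudonn\'e-Manin classification of isocrystals on the one side and the Fargues-Fontaine (resp.\ Hartl-Pink) classification of vector bundles on $X_E$ on the other, and then verify the compatibility on simple objects.

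First, I would recall that $\mathcal{E}(-)$ is constructed and shown to be a tensor functor in \cite[Section 8.2.3.]{fargues_fontaine_courbe_et_fibres_vectories_en_theorie_de_hodge_p_adique} and \cite[Chapter 8]{hartl_pink_vector_bundles_with_frobenius_structure}. Exactness is then automatic from additivity because $\varphi-\mathrm{Mod}_L$ is semisimple by Dieudonn\'e-Manin: every short exact sequence of isocrystals splits and is therefore preserved by any additive functor.

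Second, the core of the argument is the computation, made in the cited references, that for every $\lambda\in \Q$ the simple isocrystal $D_{-\lambda}$ of slope $-\lambda$ is sent to the stable vector bundle $\mathcal{O}_{X_E}(\lambda)$ of slope $\lambda$, and that the induced map on endomorphism algebras is an isomorphism of central division algebras. Combined with Dieudonn\'e-Manin on the source and the main classification theorem (every vector bundle on $X_E$ is isomorphic to a direct sum of stable $\mathcal{O}_{X_E}(\mu_i)$'s) on the target, this immediately yields both the bijection on isomorphism classes and the slope-by-slope equivalence, since semistable objects on either side are exactly the isotypic sums of a single simple object.

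Finally, faithfulness follows formally: a morphism between two isocrystals decomposes into components between isotypic summands, which vanish unless the slopes agree, and on each isotypic summand $\mathcal{E}(-)$ is injective on $\mathrm{Hom}$'s by the endomorphism-algebra computation above. The only real subtlety in writing this up is matching the sign convention between the isocrystal slope and the bundle slope, which is fixed by the normalization $\mathcal{E}(\mathbf{1})=\mathcal{O}_{X_E}$ together with the computation on the $D_{-1/h}$; no genuine obstacle arises, since the heavy lifting has already been carried out in the cited references.
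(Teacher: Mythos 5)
Your proposal is correct and matches the approach taken in the cited references; the paper's own proof of this theorem is simply a pointer to \cite[Section 8.2.4.]{fargues_fontaine_courbe_et_fibres_vectories_en_theorie_de_hodge_p_adique} and \cite[Proposition 8.6., Theorem 11.1.]{hartl_pink_vector_bundles_with_frobenius_structure}, and what you sketch (semisimplicity of $\varphi-\mathrm{Mod}_L$ for exactness, Dieudonn\'e--Manin on the source and the Fargues--Fontaine/Hartl--Pink classification on the target to reduce to simple objects, the computation $\mathcal{E}(D_{-\lambda})\cong\mathcal{O}_{X_E}(\lambda)$ with matching endomorphism division algebras, and the sign normalization via $\mathcal{E}(\mathbf{1})=\mathcal{O}_{X_E}$) is exactly how those references establish the result.
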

\begin{proof}
Cf.\ \cite[Section 8.2.4.]{fargues_fontaine_courbe_et_fibres_vectories_en_theorie_de_hodge_p_adique} resp. \cite[Proposition 8.6., Theorem 11.1.]{hartl_pink_vector_bundles_with_frobenius_structure}.  
\end{proof}

One has to be a bit careful when comparing the slope of some isocrystal 
$$
D\in \varphi-\mathrm{Mod}_L
$$ 
with the slope of the vector bundle $\mathcal{E}(D)$. If $\pi\in E$ is a uniformizer then for $n\in \Z$ the isocrystal 
$$
D:=(L,\pi^n\varphi_L)
$$
is of slope $n$ and sent to the line bundle $\mathcal{O}_{X_E}(-n)$ and not to $\mathcal{O}_{X_E}(n)$. This explains the appearance of the sign in \Cref{theorem: classification of vector bundles}.

\Cref{theorem: classification of vector bundles} implies that the functor
$$
\mathrm{gr}\circ\mathrm{HN}\colon \Bun_{X_E}\to \mathrm{Gr}^\Q\Bun_{X_E}
$$
preserves duals and symmetric resp.\ exterior powers because this is true for the functor sending an isocrystal to its decomposition into isoclinic components.

Moreover, the category $\varphi-\mathrm{Mod}_L$ is canonically $\Q$-graded by decomposing an isocrystal into its isoclinic components.
We thus obtain a functor
$$
\mathcal{E}_{\mathrm{gr}}(-)\colon \varphi-\mathrm{Mod}_L\to \mathrm{Gr}^\Q\Bun_{X_E}.
$$

\begin{lemma}
\label{lemma: isocrystals as graded bundles}
The functor $\mathcal{E}_{\mathrm{gr}}(-)$ identifies the category $\varphi-\mathrm{Mod}_L$ with the full subcategory of $\mathrm{Gr}^\Q\Bun_{X_E}$ consisting of $\Q$-graded vector bundles 
$$
\mathcal{E}=\bigoplus\limits_{\lambda\in \Q}\mathcal{E}^\lambda
$$ 
such that for $\lambda\in \Q$ the vector bundle $\mathcal{E}^\lambda$ is semistable of slope $\lambda$.  
\end{lemma}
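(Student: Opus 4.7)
The plan is to reduce everything slope-by-slope to \Cref{theorem: classification of vector bundles}, which gives an equivalence between semistable isocrystals of slope $-\lambda$ and semistable vector bundles of slope $\lambda$. Since both the category $\varphi-\mathrm{Mod}_L$ and the target subcategory decompose as direct sums indexed by $\Q$ (the former by isoclinic components via Dieudonn\'e--Manin, the latter tautologically by the grading), the statement should follow formally from that equivalence applied on each isoclinic piece. The one bookkeeping point is that $\mathcal{E}_{\mathrm{gr}}$ places an isoclinic component $D^\mu$ of slope $\mu$ in grading $-\mu$, so that $\mathcal{E}(D^\mu)$ (being semistable of slope $-\mu$ by the sign convention recalled after \Cref{theorem: classification of vector bundles}) lands in the piece of grading $-\mu$, matching the condition on the target subcategory.

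For essential surjectivity onto the described subcategory, I would start with a $\Q$-graded bundle $\mathcal{E}=\bigoplus_{\lambda\in\Q}\mathcal{E}^\lambda$ such that each $\mathcal{E}^\lambda$ is semistable of slope $\lambda$. Only finitely many pieces are nonzero since $\mathcal{E}$ has finite rank. For each such $\lambda$, invoke \Cref{theorem: classification of vector bundles} to obtain an isoclinic isocrystal $D^{(\lambda)}\in\varphi-\mathrm{Mod}_L$ of slope $-\lambda$ with $\mathcal{E}(D^{(\lambda)})\cong \mathcal{E}^\lambda$. The finite direct sum $D:=\bigoplus_\lambda D^{(\lambda)}$ is then a well-defined isocrystal whose isoclinic decomposition has $D^{(\lambda)}$ in slope $-\lambda$, and by construction $\mathcal{E}_{\mathrm{gr}}(D)\cong \mathcal{E}$ as $\Q$-graded vector bundles.

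For full faithfulness, I would use that morphisms on both sides automatically decompose along the grading. On the source, every morphism $f\colon D\to D'$ of isocrystals respects the isoclinic decomposition, since the slope filtration is functorial, and splits as $\bigoplus_\mu f^\mu$ with $f^\mu\colon D^\mu\to (D')^\mu$. On the target, a morphism in $\mathrm{Gr}^\Q\Bun_{X_E}$ is by definition degree-preserving and hence decomposes as $g=\bigoplus_\lambda g^\lambda$ with $g^\lambda\colon \mathcal{E}(D^{-\lambda})\to \mathcal{E}((D')^{-\lambda})$ a morphism of semistable vector bundles of slope $\lambda$. By the equivalence of semistable isocrystals of slope $-\lambda$ with semistable bundles of slope $\lambda$ from \Cref{theorem: classification of vector bundles}, each $g^\lambda$ comes from a unique morphism $D^{-\lambda}\to (D')^{-\lambda}$ of isocrystals, and assembling gives the unique morphism $D\to D'$ of isocrystals inducing $g$.

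There is no real obstacle here: the grading forces morphisms to split slope-by-slope, and on each slope the classification theorem provides an equivalence. The only point requiring some care is the sign in the slope matching, which simply determines how the $\Q$-grading on $\varphi-\mathrm{Mod}_L$ (by isoclinic components) is transported to the grading on the associated vector bundle.
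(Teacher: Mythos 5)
Your proof is correct and follows exactly the same route as the paper, which simply cites the classification of vector bundles and their homomorphisms (\Cref{theorem: classification of vector bundles}) without writing out the slope-by-slope reduction. You have just supplied the details — decomposition into isoclinic/graded pieces, essential surjectivity and full faithfulness on each piece, and the sign bookkeeping — that the paper leaves implicit.
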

\begin{proof}
  This follows from the classification of vector bundles on $X_E$ and their homomorphisms (cf.\ \Cref{theorem: classification of vector bundles}).
\end{proof}

We moreover see that the functor
$$
\varphi-\mathrm{Mod}_L\xrightarrow{\mathcal{E}(-)}\mathrm{Bun}_{X_E}\xrightarrow{\mathrm{HN}} \mathrm{Fil}^\Q\Bun_{X_E}\xrightarrow{\mathrm{gr}}\mathrm{Gr}^\Q\Bun_{X_E}
$$
is fully faithful and exact as it is isomorphic to the functor $\mathcal{E}_{\mathrm{gr}}(-)$. In the following we will sometimes just write $\mathcal{E}(-)$ instead of $\mathcal{E}_{\mathrm{gr}}(-)$.

We now start to classify admissible fiber functors.

\begin{lemma}
\label{lemma: exact tensor functors over isocrystals define admissible fiber functors}
Let $\omega^\prime\colon \mathcal{T}\to \varphi-\mathrm{Mod}_L$ be an exact tensor functor. Then the fiber functor 
$$
\mathcal{E}(-)\circ \omega^\prime\colon\mathcal{T}\to \mathrm{Bun}_{X_E}
$$ 
is admissible.   
\end{lemma}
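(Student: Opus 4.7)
The plan is to unwind the definitions: admissibility of $\mathcal{E}(-)\circ\omega^\prime$ means that $\mathrm{HN}\circ \mathcal{E}(-)\circ \omega^\prime\colon \mathcal{T}\to \mathrm{Fil}^\Q\Bun_{X_E}$ is an exact tensor functor. The tensor functor part is automatic, since each of $\omega^\prime$, $\mathcal{E}(-)$ and $\mathrm{HN}$ is a tensor functor (the last by \Cref{lemma: harder-narasimhan tensor functor}), and composition of tensor functors is a tensor functor. So the real content is exactness.

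The key is the very definition of the exact structure on $\mathrm{Fil}^\Q\Bun_{X_E}$: a sequence is declared exact precisely when its image under $\mathrm{gr}$ is exact in $\mathrm{Gr}^\Q\Bun_{X_E}$. Hence to check exactness of $\mathrm{HN}\circ \mathcal{E}(-)\circ \omega^\prime$, I only need to check exactness of $\mathrm{gr}\circ \mathrm{HN}\circ \mathcal{E}(-)\circ \omega^\prime$. But the paper has already identified the composite $\mathrm{gr}\circ \mathrm{HN}\circ \mathcal{E}(-)$ with $\mathcal{E}_{\mathrm{gr}}(-)\colon \varphi\text{-}\mathrm{Mod}_L\to \mathrm{Gr}^\Q\Bun_{X_E}$, and \Cref{lemma: isocrystals as graded bundles} together with \Cref{theorem: classification of vector bundles} tells us $\mathcal{E}_{\mathrm{gr}}(-)$ is exact (indeed an equivalence onto the full subcategory of slope-graded bundles, and slope-decomposition of isocrystals is exact). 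Therefore
$$
\mathrm{gr}\circ \mathrm{HN}\circ \mathcal{E}(-)\circ \omega^\prime \;\cong\; \mathcal{E}_{\mathrm{gr}}(-)\circ \omega^\prime
$$
is exact, being a composition of two exact functors (the second being $\omega^\prime$ by hypothesis).

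Putting these together yields that $\mathrm{HN}\circ \mathcal{E}(-)\circ \omega^\prime$ is an exact tensor functor $\mathcal{T}\to \mathrm{Fil}^\Q\Bun_{X_E}$, which is exactly the definition of admissibility of $\mathcal{E}(-)\circ \omega^\prime$ (\Cref{definition: admissible fiber functor}). There is no genuine obstacle here: the whole point is that admissibility is tailored so that the slope decomposition on the isocrystal side, which is exact, controls exactness on the filtered side after passing to the associated graded — and this matching was essentially the content of \Cref{theorem: classification of vector bundles}.
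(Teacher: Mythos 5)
Your proof is correct and is essentially the paper's own argument: the paper likewise reduces admissibility to exactness of $\mathrm{gr}\circ\mathrm{HN}\circ\mathcal{E}(-)\circ\omega'$ (using that exactness in $\mathrm{Fil}^\Q\Bun$ is defined via $\mathrm{gr}$), identifies $\mathrm{gr}\circ\mathrm{HN}\circ\mathcal{E}(-)$ with $\mathcal{E}_{\mathrm{gr}}(-)$, and concludes from the exactness of $\mathcal{E}_{\mathrm{gr}}(-)$ and $\omega'$. You merely spell out the tensor-functor part and the reduction to $\mathrm{gr}$ in more detail than the paper does.
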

\begin{proof}
It suffices to show that the tensor functor 
$$
\mathrm{gr}\circ \mathrm{HN}\circ \mathcal{E}(-)\circ \omega^\prime
$$ 
is exact. But as was noted above the functor 
$$
\mathcal{E}_{\mathrm{gr}}(-)\cong \mathrm{gr}\circ \mathrm{HN}\circ \mathcal{E}(-)
$$ is exact, which implies the claim.  
\end{proof}

Let $\mathcal{T}$ be a Tannakian category and let 
$$
\omega\colon \mathcal{T}\to \Bun_{S}
$$ 
be a fiber functor of $\mathcal{T}$ over a non-empty scheme $S$. If the group scheme $\Aut^\otimes(\omega)$ of tensor automorphisms is smooth over $S$, then this is true for every fiber functor of $\mathcal{T}$ as fiber functors are fpqc-locally isomorphic. Hence, this property of smoothness is in fact intrinsic to $\mathcal{T}$ (and can be rephrased by saying that the band of the gerbe associated to $\mathcal{T}$ is smooth).

We now prove our main theorem about admissible functors.

\begin{theorem}
\label{theorem: factorization of admissible fiber functors}
Let $\mathcal{T}$ be a Tannakian category admitting a tensor generator and let 
$$
\omega\colon \mathcal{T}\to \mathrm{Bun}_{X_E}
$$ 
be an admissible fiber functor such that $\Aut^\otimes(\omega)$ is represented by a reductive group scheme over $X_E$. Then $\omega$ factors as 
$$
\omega\cong \mathcal{E}(-)\circ \omega^\prime
$$ 
for an exact tensor functor 
$$\omega^\prime\colon \mathcal{T}\to \varphi-\mathrm{Mod}_L.$$
\end{theorem}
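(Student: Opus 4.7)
The plan is to split the Harder--Narasimhan filtration of $\omega$ and then identify the associated graded with an isocrystal via \Cref{lemma: isocrystals as graded bundles}.

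First I would set $\tilde\omega := \mathrm{HN}\circ\omega\colon \mathcal{T}\to \mathrm{Fil}^\Q\Bun_{X_E}$, which is a $\Q$-filtered fiber functor by the admissibility hypothesis. Its underlying usual fiber functor is $\omega$, so $\Aut^\otimes(\mathrm{forg}\circ\tilde\omega)\cong G$ is reductive by assumption. \Cref{theorem: results about q-filtered fiber functors}~vii) then identifies $P(\tilde\omega)$ with a parabolic subgroup of $G$ and $U(\tilde\omega)$ with its unipotent radical, while part~viii) realizes the sheaf of splittings $\mathrm{Spl}(\tilde\omega)$ as a $U(\tilde\omega)$-torsor for the fpqc topology on $X_E$. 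Producing the desired $\omega^\prime$ thus reduces to trivializing this torsor on $X_E$.

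To do so I would use the canonical filtration by the normal subgroups $U_\lambda(\tilde\omega)\subseteq U(\tilde\omega)$ for $\lambda>0$, which is finite because $\mathcal{T}$ has a tensor generator. By parts~ii), iv) and~vii) of \Cref{theorem: results about q-filtered fiber functors}, each graded piece $\mathrm{gr}^\lambda(U(\tilde\omega))$ is an abelian unipotent group which, in the reductive case, is a \emph{vector bundle} canonically isomorphic to
$$
\mathrm{gr}^\lambda\bigl(\omega(\mathrm{Lie}(\pi(\mathcal{T})))\bigr).
$$
By definition of the Harder--Narasimhan filtration this is a semistable vector bundle of slope $\lambda>0$, so \Cref{theorem: classification of vector bundles} yields $H^1(X_E,\mathrm{gr}^\lambda(U(\tilde\omega)))=0$. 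Devissage along the finite central filtration of $U(\tilde\omega)$, using the non-abelian $H^1$-sequence attached to each extension of group schemes with abelian kernel, then shows that every $U(\tilde\omega)$-torsor on $X_E$ is trivial. Hence $\mathrm{Spl}(\tilde\omega)(X_E)\neq\emptyset$, and we obtain a $\Q$-graded fiber functor $\gamma\colon \mathcal{T}\to \mathrm{Gr}^\Q\Bun_{X_E}$ with $\tilde\omega\cong \mathrm{fil}\circ\gamma$. Forgetting the filtration, this splitting is a tensor isomorphism $\omega(X)\cong \bigoplus_\lambda \gamma^\lambda(X)$ in $\Bun_{X_E}$ in which $\gamma^\lambda(X)$ is identified with the $\lambda$-graded piece of the HN filtration of $\omega(X)$, hence is semistable of slope $\lambda$. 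Therefore $\gamma$ takes values in the full subcategory of $\mathrm{Gr}^\Q\Bun_{X_E}$ whose $\lambda$-component is semistable of slope $\lambda$, and by \Cref{lemma: isocrystals as graded bundles} this subcategory is identified with $\varphi-\mathrm{Mod}_L$ via $\mathcal{E}_{\mathrm{gr}}(-)$. Consequently $\gamma\cong \mathcal{E}_{\mathrm{gr}}(-)\circ\omega^\prime$ for a unique exact tensor functor $\omega^\prime\colon \mathcal{T}\to \varphi-\mathrm{Mod}_L$, and $\omega\cong \mathcal{E}(-)\circ\omega^\prime$ follows.

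The main obstacle is the cohomological vanishing in the third paragraph. It crucially combines all three hypotheses at once: admissibility, which lifts $\omega$ to a $\Q$-filtered fiber functor whose weights on $\mathrm{Lie}(G)$ are the HN slopes; reductivity of $G$, which guarantees that $\mathrm{gr}^\lambda(U(\tilde\omega))$ is an actual vector bundle rather than a more general affine unipotent group scheme; and the basic fact that semistable bundles of positive slope on $X_E$ have vanishing $H^1$. The devissage along the unipotent filtration is then a formal argument.
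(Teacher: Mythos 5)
Your proposal is correct and follows essentially the same route as the paper's proof: split the $\Q$-filtered fiber functor $\mathrm{HN}\circ\omega$ by showing the $U$-torsor of splittings is trivial, using that the graded pieces of $U$ are vector bundles semistable of positive slope (hence have vanishing $H^1$), and then identify the resulting $\Q$-graded fiber functor with an isocrystal-valued functor via \Cref{lemma: isocrystals as graded bundles}. The only cosmetic difference is that you spell out the d\'evissage on non-abelian $H^1$ along the filtration $U_\lambda$, which the paper leaves implicit.
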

\begin{proof}
We consider the tensor functor
$$
\psi:=\mathrm{HN}\circ \omega\colon \mathcal{T}\to \mathrm{Fil}^\Q\Bun_{X_E}.
$$
By assumption this tensor functor is a $\Q$-filtered fiber functor, i.e., it is exact.
Therefore it gives rise to the $U(\psi)$-torsor 
$$\mathrm{Spl}(\psi)$$
of splittings of $\psi$ (cf.\ \Cref{theorem: results about q-filtered fiber functors}).
Moreover, the group scheme $U(\psi)$ over $X_E$ is smooth (by \Cref{theorem: results about q-filtered fiber functors}) as $\Aut^\otimes(\omega)$ is assumed to be smooth, unipotent and admits a filtration 
$$U(\psi)\supseteq U_{\lambda}(\psi)$$ 
for $\lambda\geq 0$. Moreover, for $\lambda>0$ the associated graded pieces $\mathrm{gr}^\lambda(U(\gamma))$ are vector bundles with 
$$
\mathrm{Lie}(\mathrm{gr}^\lambda(U(\psi)))\cong \mathrm{gr}^\lambda(\mathrm{HN}\circ \omega(\mathrm{Lie}(\pi(\mathcal{T})))).
$$ 
where $\pi(\mathcal{T})$ is the fundamental group of $\mathcal{T}$ (cf.\ \Cref{theorem: results about q-filtered fiber functors}). In particular, 
$$
\mathrm{gr}^\lambda(U(\psi))\cong \mathrm{Lie}(\mathrm{gr}^\lambda(U(\psi)))
$$ 
is semistable of slope $\lambda>0$. As only finitely many $\mathrm{gr}^\lambda(U(\psi))$ are non-zero we can conclude
$$
H^1_{\acute{e}t}(X_E,U(\psi))=0
$$  
as for every semistable vector bundle $\mathcal{E}$ of slope $\lambda\geq 0$ on $X_E$ the cohomology group
$$H^1_{\acute{e}t}(X_E,\mathcal{E})=0$$
vanishes (cf.\ \cite[Chapter 8]{fargues_fontaine_courbe_et_fibres_vectories_en_theorie_de_hodge_p_adique} resp.\ \cite{hartl_pink_vector_bundles_with_frobenius_structure}).
In particular, the $U(\psi)$-torsor $\mathrm{Spl}(\psi)$-torsor is trivial and we see that we can choose a splitting 
$$
\gamma\colon \mathcal{T}\to \mathrm{Gr}^\Q\Bun_{X_E}
$$ 
of $\psi$, i.e., 
$$\psi=\mathrm{fil}\circ \gamma.$$
By construction for every $V\in \mathcal{T}$ and every $\lambda\in \Q$ the locally free sheaf 
$$
\mathrm{gr}^\lambda(\gamma(V))
$$ 
must be semistable of slope $\lambda$. In other words (using \Cref{lemma: isocrystals as graded bundles}), the functor 
$$
\gamma\colon \mathcal{T}\to \mathrm{Gr}^\Q\Bun_{X_E}
$$ 
factors as 
$$
\mathcal{E}_{\mathrm{gr}}(-)\circ \omega^\prime=\gamma
$$ 
for an exact tensor functor 
$$
\omega^\prime\colon \mathcal{T}\to \varphi-\mathrm{Mod}_L.
$$
Forgetting the grading shows $\mathcal{E}(-)\circ\omega^\prime\cong \omega$, which proves the claim. 
\end{proof}

\begin{definition}
\label{definition: b of t}
Let $\mathcal{T}$ be a Tannakian category over $E$. We define 
$\mathcal{B}(\mathcal{T})$ as the groupoid of exact tensor functors
$$
\omega^\prime\colon \mathcal{T}\to \varphi-\mathrm{Mod}_L
$$  
and $B(\mathcal{T})$ as the set of isomorphism classes of such.
\end{definition}

By \cite[Lemma 9.1.4.]{dat_orlik_rapoport_period_domains} (cf.\ \Cref{proposition: explicit description of b of g}) this definition agrees with Kottwitz original definition of the set $B(G)$ as $\varphi$-conjugacy classes in $G(L)$ if $\mathcal{T}=\Rep_E(G)$ for a connected reductive group $G$ over $E$.

Recall that we denote by 
$$
\mathrm{\underline{\Hom}}^\otimes(\mathcal{T},\Bun_{S})
$$ 
the category of fiber functors of $\mathcal{T}$ over $S$. For $S=X_E$ we let 
$$
\mathrm{\underline{\Hom}}^\otimes_{\mathrm{adm}}(\mathcal{T},\mathrm{Bun}_{X_E})
$$
be the full subcategory of admissible fiber functors of $\mathcal{T}$ over $X_E$.
From \Cref{theorem: factorization of admissible fiber functors} we obtain the following classification of admissible fiber functors.

\begin{theorem}
\label{theorem: classification of admissible fiber functors}
The composition of the functors
$$
\mathcal{B}(\mathcal{T})\to \mathrm{\underline{\Hom}}^\otimes_{\mathrm{adm}}(\mathcal{T},\mathrm{Bun}_{X_E}),\ 
\omega^\prime\mapsto \mathcal{E}(-)\circ \omega^\prime
$$
and 
$$
\mathrm{\underline{\Hom}}^\otimes_{\mathrm{adm}}(\mathcal{T},\mathrm{Bun}_{X_E})\to \mathcal{B}(\mathcal{T}),\ 
\omega\mapsto \mathrm{gr}\circ\mathrm{HN}\circ \omega 
$$
is naturally isomorphic to the identity.
These functors induce a bijection of $B(\mathcal{T})$ with the set of isomorphism classes in $\mathrm{\underline{\Hom}}^\otimes_{\mathrm{adm}}(\mathcal{T},\mathrm{Bun}_{X_E})$. 
\end{theorem}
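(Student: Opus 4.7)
The plan is to verify well-definedness of both assignments, check that the composition in one direction is naturally isomorphic to the identity via the already-established identification $\mathrm{gr}\circ\mathrm{HN}\circ \mathcal{E}(-) \cong \mathcal{E}_{\mathrm{gr}}(-)$, and finally to deduce the bijection on isomorphism classes by invoking \Cref{theorem: factorization of admissible fiber functors}.

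First I would verify well-definedness. The assignment $\omega^\prime \mapsto \mathcal{E}(-)\circ\omega^\prime$ produces admissible fiber functors by \Cref{lemma: exact tensor functors over isocrystals define admissible fiber functors}. For the assignment $\omega \mapsto \mathrm{gr}\circ\mathrm{HN}\circ \omega$, admissibility of $\omega$ makes the composite exact, and by construction of the Harder-Narasimhan filtration each graded piece $\mathrm{gr}^\lambda(\mathrm{HN}(\omega(V)))$ is semistable of slope $\lambda$ for every $V \in \mathcal{T}$. By \Cref{lemma: isocrystals as graded bundles} the composite therefore takes values in the essential image of $\mathcal{E}_{\mathrm{gr}}(-)$, hence factors through an exact tensor functor $\mathcal{T}\to \varphi-\mathrm{Mod}_L$, as required.

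Next I would verify that the composition $\mathcal{B}(\mathcal{T})\to \underline{\Hom}^\otimes_{\mathrm{adm}}(\mathcal{T},\Bun_{X_E})\to \mathcal{B}(\mathcal{T})$ is naturally isomorphic to the identity. Applied to $\omega^\prime$ this composition yields $\mathrm{gr}\circ\mathrm{HN}\circ\mathcal{E}(-)\circ \omega^\prime$; the natural isomorphism $\mathrm{gr}\circ\mathrm{HN}\circ\mathcal{E}(-) \cong \mathcal{E}_{\mathrm{gr}}(-)$ noted in the discussion immediately following \Cref{lemma: isocrystals as graded bundles}, combined with the fully faithful identification of $\varphi-\mathrm{Mod}_L$ with its essential image in $\mathrm{Gr}^\Q\Bun_{X_E}$, supplies the required natural isomorphism back to $\omega^\prime$.

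Finally, to obtain the bijection on isomorphism classes I would invoke \Cref{theorem: factorization of admissible fiber functors}: under its hypotheses every admissible fiber functor is isomorphic to one of the form $\mathcal{E}(-)\circ \omega^\prime$. This gives essential surjectivity of the first functor on isomorphism classes, which combined with the one-sided retraction established in the previous step yields the claimed bijection. The main obstacle has effectively been absorbed into \Cref{theorem: factorization of admissible fiber functors}; what remains here is only the formal assembly of the two composition checks together with that essential-surjectivity statement, with the understanding that the bijection is stated under the same reductive-band and tensor-generator hypotheses (the general case then follows by writing $\mathcal{T}$ as a filtered union of sub-Tannakian categories admitting a tensor generator and passing to the limit).
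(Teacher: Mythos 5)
Your proof is correct and follows essentially the same route as the paper's: the paper likewise treats the first statement (that $\mathrm{gr}\circ\mathrm{HN}\circ\mathcal{E}(-)\cong\mathcal{E}_{\mathrm{gr}}(-)$, hence the composition in that direction is the identity) as immediate from \Cref{lemma: isocrystals as graded bundles}, and then deduces the bijection on isomorphism classes from essential surjectivity supplied by \Cref{theorem: factorization of admissible fiber functors}. You spell out the well-definedness checks in more detail and, commendably, flag that the factorization theorem carries tensor-generator and reductive-band hypotheses that are left implicit in the theorem statement; the paper's terse proof glosses over this.
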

\begin{proof}
The first statement is clear by \Cref{lemma: isocrystals as graded bundles}.
By \Cref{theorem: factorization of admissible fiber functors} the canonical map from $B(\mathcal{T})$ to isomorphism classes of admissible fiber functors is surjective, hence bijective by the first statement.
\end{proof}

Let $V\in \Rep_E(G)$ be a representation. We will need to consider the symmetric tensors $\mathrm{TS}_r(V)$ of $V$ (\cite[Chapitre 3]{roby_lois_polynomes_et_lois_formelles_en_theorie_des_modules}), i.e., $\mathrm{TS}_r(V)\subseteq V^{\otimes r}$ is the set of invariants for the permutation action of $S_r$ on $V^{\otimes r}$. By \cite[Th\'eor\`eme IV.1., Proposition IV.5.]{roby_lois_polynomes_et_lois_formelles_en_theorie_des_modules} the vector space $\mathrm{TS}_r(V)$ has the following universal property:
For every homogenous polynomial 
$$
f\colon V\to k
$$ 
of degree $r$ there exists a unique linear form 
$$
\tilde{f}\colon \mathrm{TS}_r(V)\to k
$$ 
such that $f(v)=\tilde{f}(v\otimes\ldots\otimes v)$ for all $v\in V$.
If $f$ is $G$-invariant, then $\tilde{f}$ will be $G$-invariant as well.

From a different view point the module $\mathrm{TS}_r(V)$ is isomorphic to the $r$-the divided power $\Gamma_r(V)$ of $V$ (\cite[Proposition IV.5.]{roby_lois_polynomes_et_lois_formelles_en_theorie_des_modules}). It is clear from the definition that the module of symmetric tensors $\mathrm{TS}_r(V^\vee)$ of the dual $V^\vee$ of $V$ is canonically isomorphic (as a $G$-representation) to the dual of the $r$-th symmetric power $\mathrm{Sym}^r(V)$ of $V$.

Using these symmetric tensors we obtain the following result, generalizing the main theorem of \cite{fargues_g_torseurs_en_theorie_de_hodge_p_adique} if $E=\F_q((t))$.

\begin{theorem}
\label{theorem: fargues theorem}
Let $G$ be a reductive group over $E$. Then there is a canonical bijection
$$
B(G)\cong H^1_{\acute{e}t}(X_E,G).
$$
In other words, every fiber functor
$$
\omega\colon \Rep_E(G)\to \Bun_{X_E}
$$  
is admissible.
\end{theorem}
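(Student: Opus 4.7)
The plan is to reduce the theorem to the claim that every fiber functor $\omega\colon \Rep_E(G)\to \mathrm{Bun}_{X_E}$ is admissible. Granted admissibility, the bijection follows by combining \Cref{lemma: tannakian description of torsors}, which identifies $H^1_{\acute{e}t}(X_E,G)$ with isomorphism classes of fiber functors (using that a reductive $G$ is smooth), with \Cref{theorem: classification of admissible fiber functors}, which identifies $B(G)=B(\Rep_E(G))$ with isomorphism classes of admissible fiber functors. By \Cref{lemma: admissible fiber functors and finite base change} admissibility is detected after finite base change $E^\prime/E$, so one may assume $G$ is split. The task is then to show that for every short exact sequence $0\to V^\prime\to V\to V^{\prime\prime}\to 0$ in $\Rep_E(G)$ the Harder--Narasimhan filtrations of $\omega(V^\prime)$, $\omega(V)$, and $\omega(V^{\prime\prime})$ fit into a short exact sequence of $\Q$-filtered vector bundles.

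The main step is to show that the Harder--Narasimhan filtrations of $\omega(V)$ for all $V\in\Rep_E(G)$ are induced by a single parabolic reduction of the $G$-torsor corresponding to $\omega$. For this I would follow the strategy of \cite[Theorem 5.3.1]{dat_orlik_rapoport_period_domains} and exploit the invariant theory provided by the symmetric tensor construction: every $G$-invariant homogeneous polynomial $f\colon V\to E$ of degree $r$ linearizes to a $G$-equivariant map $\tilde f\colon \mathrm{TS}_r(V)\to E$, hence yields by $\omega$ a morphism of vector bundles $\omega(\mathrm{TS}_r(V))\to \mathcal{O}_{X_E}$ on $X_E$. Since $\mathcal{O}_{X_E}$ is semistable of slope $0$, such maps force non-trivial slope inequalities on the HN polygon of $\omega(V)$, and the reductivity of $G$ supplies enough invariants to pin down the HN filtration as coming from a parabolic reduction $P\subseteq G$ of the torsor. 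Once this reduction is in hand, the associated graded with respect to the dominant cocharacter giving the slope decomposition defines a tensor functor into $\mathrm{Gr}^\Q\mathrm{Bun}_{X_E}$ whose homogeneous components are semistable of the correct slopes; by \Cref{lemma: isocrystals as graded bundles} this functor comes from an exact tensor functor $\omega^\prime\colon \Rep_E(G)\to \varphi-\mathrm{Mod}_L$ with $\mathcal{E}(-)\circ\omega^\prime\cong \omega$, and admissibility then follows from \Cref{lemma: exact tensor functors over isocrystals define admissible fiber functors}.

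I expect the main obstacle to be the invariant-theoretic heart of the argument: producing, for a non-semistable $\omega(V)$, enough $G$-invariants in tensor powers of $V$ that simultaneously witness the HN filtration of $\omega(V)$ and assemble into a single parabolic reduction of the underlying $G$-torsor. In characteristic zero this is essentially automatic since $\Rep_E(G)$ is semisimple (as already noted in the excerpt), but in equal characteristic one cannot rely on semisimplicity and must instead argue through the characterization of semistability via symmetric tensor invariants. This is precisely the step for which the acknowledgement credits Rapoport's hint to \cite[Theorem 5.3.1]{dat_orlik_rapoport_period_domains}.
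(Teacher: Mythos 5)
Your plan correctly identifies the reduction (via \Cref{theorem: classification of admissible fiber functors} and \Cref{lemma: tannakian description of torsors}) to showing that every fiber functor $\omega$ is admissible, and you correctly locate the crucial technical tool: working with symmetric tensors $\mathrm{TS}_r(V)$ rather than symmetric powers. However, the invariant-theoretic heart of the argument -- which you explicitly flag as the expected obstacle -- is left unfilled, and the logical organization you propose for it differs from the paper's in a way that matters. You aim to first produce a parabolic reduction of the $G$-torsor underlying $\omega$, from which exactness of $\mathrm{HN}\circ\omega$ would follow; the paper instead proves exactness directly, and the parabolic reduction $P(\mathrm{HN}\circ\omega)$ is then a \emph{consequence} via \Cref{theorem: results about q-filtered fiber functors}(vii). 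Your phrase ``the reductivity of $G$ supplies enough invariants to pin down the HN filtration as coming from a parabolic reduction'' is precisely the step that needs an argument; trying to produce the parabolic reduction before knowing exactness is circular, since a parabolic reduction inducing the HN filtrations on all $\omega(V)$ is essentially equivalent to exactness of $\mathrm{HN}\circ\omega$.

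The paper's actual route is more concrete and avoids that circularity. It sets $\omega^\prime := \mathrm{gr}\circ\mathrm{HN}\circ\omega$ and reduces the exactness of $\omega^\prime$ to showing that for a short exact sequence $0\to V\to V^\prime\to V^{\prime\prime}\to 0$ the map $\omega^\prime(V)\to\omega^\prime(V^\prime)$ is injective (surjectivity on the right end then follows by dualizing and a slope/rank count). Injectivity is then reduced, via compatibility of $\omega^\prime$ with exterior powers, to the case $\dim V = 1$, and, after twisting by $V^\vee$, to $V$ being the trivial representation; at that point one only needs the map to be nonzero. This nonvanishing is where \emph{Haboush's theorem} enters -- an ingredient entirely absent from your proposal -- to produce a $G$-invariant homogeneous polynomial $f\colon V^\prime\to E$ with $f|_V\neq 0$, whose linearization $\tilde f\colon\mathrm{TS}_r(V^\prime)\to E$ splits $\mathrm{TS}_r(V)\to\mathrm{TS}_r(V^\prime)$, and hence (applying $\omega^\prime$ and its compatibility with $\mathrm{TS}_r$) splits $\mathrm{TS}_r(\omega^\prime(V))\to\mathrm{TS}_r(\omega^\prime(V^\prime))$, forcing $\omega^\prime(V)\to\omega^\prime(V^\prime)$ to be nonzero. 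Your proposed preliminary reduction to split $G$ via \Cref{lemma: admissible fiber functors and finite base change} is unnecessary (Haboush applies to any reductive group) and would require the Tannakian base-change machinery of \Cref{lemma: base change of tannakian category}, adding complications the paper only needs later for \Cref{theorem: classification of torsors general case}. Finally, be aware that the replacement of $\mathrm{Sym}^r$ by $\mathrm{TS}_r$ is not merely cosmetic: the paper exhibits an explicit counterexample in characteristic $2$ to the $\mathrm{Sym}^r$-based argument in \cite[Theorem 5.3.1]{dat_orlik_rapoport_period_domains}, so the care you take with $\mathrm{TS}_r$ is essential, but it must be combined with the exterior power/duality/Haboush reduction chain to actually close the argument.
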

\begin{proof}
By \Cref{theorem: classification of admissible fiber functors} it suffices to prove the second statement that every fiber functor 
$$
\omega\colon \mathrm{Rep}_E(G)\to \mathrm{Bun}_{X_E}
$$ 
is automatically admissible because the isomorphism classes of fiber functors of $\Rep_E(G)$ over $X_E$ are in bijection with $G$-torsors (for the \'etale topology) over $X_E$ (cf.\ \Cref{lemma: tannakian description of torsors}).
Let
$$
\omega\colon \Rep_E(G)\to \mathrm{Bun}_{X_E}
$$  
be a fiber functor. We want to show that the composite
$$
\mathrm{HN}\circ \omega\colon \Rep_E(G)\to \mathrm{Fil}^\Q\Bun_{X_E}
$$
is still exact. Equivalently, we can check this after taking the associated graded, i.e., we can check that the functor
$$
\mathrm{gr}\circ \mathrm{HN}\circ \omega\colon \Rep_E(G)\to \mathrm{Gr}^\Q\Bun_{X_E}
$$
is exact. As was remarked after \Cref{theorem: classification of vector bundles} all of the functors $\mathrm{gr}\circ \mathrm{HN}$ and $\omega$ are compatible with duals and symmetric resp.\ exterior powers. Moreover, they preserve modules of symmetric tensors, i.e., $S_r$-invariants in the $r$-th tensor powers.  
Using this a slightly modified proof as in \cite[Theorem 5.3.1.]{dat_orlik_rapoport_period_domains} works. We fill out the details. 
Let
$$
0\to V\to V^\prime\to V^{\prime\prime}\to 0
$$
be an exact sequence in $\Rep_E(G)$. Set
$$
\omega^\prime:=\mathrm{gr}\circ \mathrm{HN}\circ \omega.
$$
 It suffices to prove that the morphism
$$
\omega^\prime(\alpha)\colon \omega^\prime(V)\to \omega^\prime(V^\prime)
$$
is injective. Indeed, dualizing the sequence $0\to V\to V^\prime\to V^{\prime\prime}\to 0$ shows that then the morphism
$$
\omega^\prime(V^\prime)\to \omega^\prime(V^{\prime\prime})
$$
has a torsion cokernel. But this morphism is a direct sum of morphisms between semistable vector bundles of the same slope, which implies that this morphism is already surjective. Hence the morphism
$$
\omega^\prime(V^\prime)\to \omega^\prime(V^{\prime\prime})
$$
is surjective, which then implies that the sequence
$$
0\to \omega^\prime(V)\to \omega^\prime(V^\prime)\to \omega^\prime(V^{\prime\prime})\to 0
$$
is exact, because
$$
\mathrm{rk}(\omega^\prime(V^\prime))=\mathrm{rk}(\omega^\prime(V^\prime))+\mathrm{rk}(\omega^\prime(V^{\prime\prime})).
$$
Hence, we are left with proving that the morphism
$$
\omega^\prime(\alpha)\colon \omega^\prime(V)\to \omega^\prime(V^\prime)
$$
is injective. But $\omega^\prime(\alpha)$ is injective if and only this is true after taking the exterior power $\Lambda^{\dim V}$. As the functors $\mathrm{gr}\circ \mathrm{HN}$ and $\omega$ preserve exterior powers this reduces the claim to the case that $V$ is of dimension one. Tensoring with the dual of $V$ reduces to the case that $V$ is a trivial $G$-representation.
It suffices to proof that $\omega^\prime(\alpha)$
is non-zero. By Haboush's theorem (cf.\ \cite{haboush_reductive_groups_are_geometrically_reductive}) there exists some $r\geq 0$ and a $G$-equivariant homogenous polynomial
$$
f\colon V^\prime \to k
$$ 
such that $f_{|V}\neq 0$.
Using the universal property of $\mathrm{TS}_r(V^\prime)$ we obtain a $G$-equivariant linear form
$$
\tilde{f}\colon \mathrm{TS}_r(V^\prime)\to k
$$
such that $\tilde{f}$ is non-zero when restricted to $V\cong \mathrm{TS}_r(V)$.
In particular, the morphism
$$
\mathrm{TS}_r(V)\to \mathrm{TS}_r(V^\prime) 
$$
is split. This implies, using the compatibility of $\omega^\prime$ with symmetric tensors, that the morphism
$$
\mathrm{TS}_r(\omega^\prime(V))\to \mathrm{TS}_r(\omega^\prime(V^\prime)) 
$$
is split as well.
In particular, 
$$
\omega^\prime(\alpha)\colon \omega^\prime(V)\to \omega^\prime(V^\prime)
$$
is non-zero. This finishes the proof.
\end{proof}

We want to explain why our proof differs from the one in \cite[Theorem 5.3.1]{dat_orlik_rapoport_period_domains}. In fact the proof there is wrong and we give a counter example to their argument. However, using the above argument with symmetric tensors instead of symmetric powers, the proof in \cite[Theorem 5.3.1.]{dat_orlik_rapoport_period_domains} can be fixed. 
The basic problem is that in positive characteristic the dual of a symmetric power $\mathrm{Sym}^r(V)$ is not canonically isomorphic to the $r$-th symmetric power $\mathrm{Sym}^r(V^\vee)$ of the dual $V^\vee$ but to the $r$-th module $\mathrm{TS}_r(V^\vee)$ of symmetric tensors of $V^\vee$.
As was remarked before \Cref{theorem: fargues theorem} the module $\mathrm{TS}_r(V)$ of symmetric tensors is canonically isomorphic to the $r$-th divided power $\Gamma_r(V)$ of $V$. We will rather speak about divided powers in the following.

To give the counter example we assume that $E$ is some field of characteristic $2$.
Let $G:=\mathrm{SL}_2$ and take $V:=E^2$ as the standard representation of $G$ with standard basis $x,y\in V$. Note that $V$ is a selfdual representation using the paring
$$
V\times V\to E,\ ((x_1,y_1),(x_2,y_2))\mapsto x_1y_2+y_1x_2.
$$
Let $W:=\Gamma_2(V)$ be the second divided power of $V$.
Then there is a short (non-split) exact sequence
$$
0\to V_1\to W\to \mathrm{Fr}^\ast(V)\to 0,
$$
where $V_1$ is spanned by the element $x^{[1]}y^{[1]}\in W$ and where $\mathrm{Fr}^\ast(V)$ denotes the Frobenius twist of $V$.
Note that $G$ acts trivially on $V_1$.
We claim that there does not exist some $r\geq 1$ such that the morphism $V_1\cong \mathrm{Sym}^r(V_1)\to \mathrm{Sym}^r(W)$ is split.
In fact, we prove that for every $r\geq 1$ the restriction morphism
$$
(\mathrm{Sym}^r(W)^\vee)^G\to \mathrm{Sym}^r(V_1)^\vee
$$
from $G$-invariants in $\mathrm{Sym}^r(W)^\vee$ to $\mathrm{Sym}^r(V_1)^\vee$ is zero.
Using selfduality of $V$ and the fact that symmetric and divided powers are dual to each other it suffices to show that the morphism
$$
(\mathrm{Sym}^r(W)^\vee)^G\cong (\Gamma_r(\mathrm{Sym}^2(V)))^G\to \Gamma_r(V_2)
$$
is zero where $V_2$ denotes the quotient $\mathrm{Sym}^2(V)/{\mathrm{Fr}^\ast(V)}$ (concretely 
$$
V_2=\langle x^2,y^2,xy\rangle/{\langle x^2,y^2\rangle}).
$$
Set $A:=x^2, B:=y^2, C:=xy\in \mathrm{Sym}^2(V)$. A general matrix
$$
\begin{pmatrix}
a & b \\
c & d 
\end{pmatrix}\in G=\mathrm{SL}_2
$$
maps these elements to
$$
\begin{matrix}
A & \mapsto & a^2A+c^2B \\
B & \mapsto & b^2A+d^2B \\
C & \mapsto & C+abA+cdB.
\end{matrix}
$$
It suffices to show that there does not exist a $G$-invariant element $f\in \Gamma_r(\mathrm{Sym}^2(V))$ of the form
$$
f=C^{[r]}+g
$$
with $g\in \Gamma_r(\mathrm{Sym}^2(V))$ only involving divided power monomials $A^{[k]}B^{[l]}C^{[h]}$ with $h<r$. Namely, elements like $g$ span the kernel of the restriction map to $V_2$.
Let $T\subseteq G$ be the standard torus. The element
$$
\begin{pmatrix}
\lambda & 0 \\
0 & \lambda^{-1}  
\end{pmatrix}\in T
$$
maps $A\mapsto \lambda^2A$, $B\mapsto \lambda^{-2}B$ and $C\mapsto C$.
Therefore we can conclude that every $G$-invariant element $f\in \Gamma_r(\mathrm{Sym}^r(V))$ must be of the form
$$
f=\sum\limits_{k+2l=r}a_{l,k}A^{[l]}B^{[l]}C^{[k]},
$$ 
i.e.\ in each divided power monomial occuring in $f$ the divided powers of $A$ match the divided powers of $B$.
Now apply the element 
$$
\gamma=\begin{pmatrix}
 1 & 1 \\
0 & 1  
\end{pmatrix}
$$
to $f$.
This yields
$$
f=\gamma f=\sum\limits_{k+2l=r}a_{l,k}A^{[l]}(B+A)^{[l]}(C+A)^{[k]}
$$
by $G$-invariance of $f$.
Now we collect the coefficient of $A^{[r]}$ using the rules for calculating with divided powers. Namely, this coefficient is given by
$$
\kappa:=\sum\limits_{k+2l=r}a_{l,k}((l,l,k))
$$
where $((l,l,k)):=\frac{(l+l+k)!}{l!l!k!}$.
By $T$-equivariance of $f$ we know that $\kappa=0$.
We claim moreover that
$$
\kappa=a_{0,r}
$$
which implies $a_{0,r}=0$, i.e., our claim about $f$.
It suffices to see that if $k+2l=r$ the number $$((l,l,k))$$ is divisible by $2$ if $k\neq r$ as we are over a field of characteristic $2$. 
But
$$
((l,l,k))=\binom{r}{2l}\binom{2l}{l}
$$
and $\binom{2l}{l}$ is divisible by $2$ if $l\neq 0$ by Pascal's identity
$$
\binom{2l}{l}=\binom{2l-1}{l}+\binom{2l-1}{l-1}=2\binom{2l-1}{l}.
$$

\section{Applications}
\label{section: applications}

\Cref{theorem: fargues theorem} has applications to local class field theory and to the \'etale and flat cohomology of $X_E$ with finite coefficients (cf.\ \cite[Section 3]{fargues_g_torseurs_en_theorie_de_hodge_p_adique}). In this section we present these applications to handle, similarly to \cite{fargues_g_torseurs_en_theorie_de_hodge_p_adique} for $E/\Q_p$, the case $E=\F_q((t))$. However, the methods for $E=\F_q((t))$ are very similar to the one for $E/\Q_p$ and we present the statements uniformly for both cases.

We keep the notations from the last section. Moreover, we denote by $\bar{E}$ a separable closure of $E$.
First we recall the calculation of the \'etale fundamental group $\pi_1^{\acute{e}t}(X_E,\bar{x})$ of $X_E$.

\begin{theorem}
\label{theorem: etale fundamental group of the curve}
For every geometric point $\bar{x}$ of $X_E$ the canonical morphism
$$
\pi^{\acute{e}t}_1(X_E,\bar{x})\to\Gal(\bar{E}/E)
$$  
is an isomorphism.
\end{theorem}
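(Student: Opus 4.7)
The plan is to show that the pullback functor from finite étale $E$-algebras to finite étale $\mathcal{O}_{X_E}$-algebras is an equivalence of categories; by the Galois-theoretic description of the étale fundamental group, this yields the claimed isomorphism. The surjectivity direction is direct: for every finite separable extension $E^\prime/E$, the scheme $X_{E^\prime}\cong X_E\otimes_E E^\prime$ is finite étale over $X_E$ as a base change of $\Spec E^\prime\to\Spec E$, and is connected since $H^0(X_{E^\prime},\mathcal{O}_{X_{E^\prime}})=E^\prime$ is a field. When $E^\prime/E$ is Galois this cover is a $\Gal(E^\prime/E)$-torsor, yielding the surjection $\pi_1^{\acute{e}t}(X_E,\bar x)\twoheadrightarrow \Gal(\bar E/E)$.

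For injectivity, let $\pi\colon Y\to X_E$ be a finite étale cover and set $\mathcal{A}:=\pi_*\mathcal{O}_Y$, a finite étale $\mathcal{O}_{X_E}$-algebra whose underlying sheaf is a vector bundle on $X_E$. The core claim is that $\mathcal{A}$ is semistable of slope zero. Let $\lambda_{\max}$ denote the maximal Harder-Narasimhan slope of $\mathcal{A}$ and let $\mathcal{F}:=\mathrm{HN}^{\lambda_{\max}}(\mathcal{A})$, which is semistable of slope $\lambda_{\max}$. By \Cref{theorem: classification of vector bundles}, tensor products of semistable bundles on $X_E$ are semistable, so $\mathcal{F}\otimes\mathcal{F}$ is semistable of slope $2\lambda_{\max}$, and hence so is the image of the multiplication map $\mathcal{F}\otimes\mathcal{F}\to\mathcal{A}$. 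If $\lambda_{\max}>0$, this image would give a subsheaf of $\mathcal{A}$ whose saturation is a subbundle of slope $2\lambda_{\max}>\lambda_{\max}$, forcing it to vanish; but then every local section of $\mathcal{F}$ would be square-zero in $\mathcal{A}$, contradicting reducedness of the étale algebra $\mathcal{A}$. Hence $\lambda_{\max}\leq 0$. Since the trace pairing $\mathcal{A}\otimes\mathcal{A}\to\mathcal{O}_{X_E}$ of an étale algebra is perfect, one has $\mathcal{A}^\vee\cong\mathcal{A}$, which forces the multiset of slopes of $\mathcal{A}$ to be symmetric about zero; combined with $\lambda_{\max}\leq 0$, this gives that every slope equals zero.

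By \Cref{theorem: classification of vector bundles}, a semistable vector bundle of slope zero on $X_E$ corresponds to the trivial isocrystal, so $\mathcal{A}\cong\mathcal{O}_{X_E}^n$ with $n=\mathrm{rk}(\mathcal{A})$. Then $A:=H^0(X_E,\mathcal{A})$ is a finite $E$-algebra of dimension $n$, the natural evaluation map $A\otimes_E\mathcal{O}_{X_E}\to\mathcal{A}$ is an isomorphism of $\mathcal{O}_{X_E}$-algebras, and $A$ is étale over $E$ by fpqc descent of étaleness from $\mathcal{A}$ over $\mathcal{O}_{X_E}$. Hence $Y\cong X_E\times_{\Spec E}\Spec A$ arises from base change of a finite étale $E$-algebra, completing the proof.

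The main obstacle is the semistability claim for $\mathcal{A}$. The upper slope bound uses only the algebra structure and reducedness, but the lower bound relies on the self-duality $\mathcal{A}\cong\mathcal{A}^\vee$ coming from the trace pairing; one must use étaleness crucially to ensure this pairing is non-degenerate (separability of all residue extensions is what makes the trace form perfect, even in positive characteristic). Equivalently one may dualize and rerun the tensor-product argument against the coalgebra structure on $\mathcal{A}^\vee$.
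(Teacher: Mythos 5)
Your proof is correct and follows the same strategy as the Fargues--Fontaine argument (Th\'eor\`eme 8.6.1 of their book) that the paper cites for this theorem: show that $\pi_\ast\mathcal{O}_Y$ is semistable of slope zero, using the algebra structure together with reducedness to bound the slopes from above and the perfect trace form of an \'etale algebra to bound them from below, so that the pushforward is a trivial bundle and the cover descends to $\Spec(E)$. The paper itself only points to that reference and observes that the same proof carries over to equal characteristic via \Cref{theorem: classification of vector bundles}; your write-up simply supplies the details of precisely that argument.
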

\begin{proof}
This is proven in \cite[Th\'eor\`eme 8.6.1.]{fargues_fontaine_courbe_et_fibres_vectories_en_theorie_de_hodge_p_adique} for $E/\Q_p$, but the same proof applies to $E=\F_q((t))$ using \Cref{theorem: classification of vector bundles}.  
\end{proof}

Let $\mathrm{Br}(X_E)$ be the Brauer group of $X_E$. Because $X_E$ is noetherian of Krull dimension $1$ \cite[Corollaire 2.2.]{grothendieck_le_group_de_brauer_II} implies  $$
\mathrm{Br}(X_E)\cong H^2_{\acute{e}t}(X_E,\mathbb{G}_m).
$$

\begin{theorem}
\label{theorem: brauer group of the curve}
We have
$$
\mathrm{Br}(X_E)=H^2_{\acute{e}t}(X_E,\mathbb{G}_m)=0.
$$  
\end{theorem}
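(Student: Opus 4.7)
The plan is to adapt Fargues' strategy from the $p$-adic setting (cf.\ \cite[Th\'eor\`eme 2.6.]{fargues_g_torseurs_en_theorie_de_hodge_p_adique}) by combining \Cref{theorem: fargues theorem} applied to the groups $\mathrm{GL}_n$ and $\mathrm{PGL}_n$ with Hilbert 90 over $L=\widehat{E^{\mathrm{un}}}$. The key mechanism is that every Brauer class comes from a $\mathrm{PGL}_n$-torsor, and via Fargues' theorem the corresponding boundary map factors through the map of Kottwitz sets $B(\mathrm{GL}_n)\to B(\mathrm{PGL}_n)$, which turns out to be surjective for trivial reasons.

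Concretely, by the identification $\mathrm{Br}(X_E)\cong H^2_{\acute{e}t}(X_E,\mathbb{G}_m)$ recalled just before the statement, every class in $H^2_{\acute{e}t}(X_E,\mathbb{G}_m)$ is represented by an Azumaya algebra and hence lies in the image of the boundary map
\[
\delta_n\colon H^1_{\acute{e}t}(X_E,\mathrm{PGL}_n)\to H^2_{\acute{e}t}(X_E,\mathbb{G}_m)
\]
attached to the central extension $1\to \mathbb{G}_m\to \mathrm{GL}_n\to \mathrm{PGL}_n\to 1$, for some $n\geq 1$. It therefore suffices to show that $\delta_n$ is the zero map for every $n$. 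By exactness of non-abelian cohomology of pointed sets at $H^1_{\acute{e}t}(X_E,\mathrm{PGL}_n)$, this is in turn equivalent to the surjectivity of the natural map
\[
H^1_{\acute{e}t}(X_E,\mathrm{GL}_n)\to H^1_{\acute{e}t}(X_E,\mathrm{PGL}_n).
\]

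Applying \Cref{theorem: fargues theorem} to both $\mathrm{GL}_n$ and $\mathrm{PGL}_n$ identifies this with the canonical map $B(\mathrm{GL}_n)\to B(\mathrm{PGL}_n)$ of Kottwitz sets, which is induced by the map of $L$-points $\mathrm{GL}_n(L)\to \mathrm{PGL}_n(L)$. Hilbert 90 for the field $L$, i.e.\ $H^1(L,\mathbb{G}_m)=0$, implies this latter map of groups is surjective; lifting any representative $\bar{g}\in\mathrm{PGL}_n(L)$ of a class in $B(\mathrm{PGL}_n)$ to some $g\in \mathrm{GL}_n(L)$ then yields a preimage in $B(\mathrm{GL}_n)$, so the map on $\varphi_L$-conjugacy classes is surjective as required.

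The main obstacle is the input that every cohomology class in $H^2_{\acute{e}t}(X_E,\mathbb{G}_m)$ is represented by an Azumaya algebra; this is supplied by Grothendieck's result on the Brauer group of noetherian schemes of Krull dimension one (already cited before the statement). Once this reduction is granted, the proof is entirely formal given \Cref{theorem: fargues theorem} and Hilbert 90 for $L$, so the argument works uniformly for $E/\Q_p$ and for $E\cong \F_q((t))$ with no additional difficulty in the equal-characteristic case.
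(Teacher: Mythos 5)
Your proposal is correct and follows essentially the same route as the paper: represent a Brauer class by a $\mathrm{PGL}_n$-torsor, transport via \Cref{theorem: fargues theorem} and \Cref{proposition: explicit description of b of g} to the map $B(\mathrm{GL}_n)\to B(\mathrm{PGL}_n)$, and observe that the latter is surjective because $\mathrm{GL}_n(L)\to\mathrm{PGL}_n(L)$ is (you invoke Hilbert~90 for $L$, which is the precise reason the paper calls this ``trivially surjective''). One small imprecision: since $H^1_{\acute{e}t}(X_E,\mathbb{G}_m)\cong\Z$ is nontrivial, the vanishing of $\delta_n$ is \emph{implied by}, but not formally \emph{equivalent to}, surjectivity of $H^1_{\acute{e}t}(X_E,\mathrm{GL}_n)\to H^1_{\acute{e}t}(X_E,\mathrm{PGL}_n)$; only the implication you actually use is needed, so the argument is fine.
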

\begin{proof}
By definition, every element in $\mathrm{Br}(X_E)$ is in the image of 
$$
H^1_{\acute{e}t}(X_E,\mathrm{PGL}_n)\to H^2_{\acute{e}t}(X_E,\mathbb{G}_m)
$$ 
for some $n\geq 0$.
By \Cref{theorem: fargues theorem} there is a commutative diagram
$$
\xymatrix{
B(\mathrm{GL}_n)\ar[r]\ar[d]^\cong & B(\mathrm{PGL}_n)\ar[d]^\cong \\
H^1_{\acute{e}t}(X_E,\mathrm{Gl}_n)\ar[r]& H^1_{\acute{e}t}(X_E,\mathrm{PGL}_n).
}
$$
Moreover, using \Cref{proposition: explicit description of b of g} the top horizontal arrow is trivially surjective. In particular, every element in $H^1_{\acute{e}t}(X_E,\mathrm{PGL}_n)$ maps to $0$ in $\mathrm{Br}(X_E)$. Thus, 
$$
\mathrm{Br}(X_E)=0.
$$
\end{proof}

Let 
$$
f\colon X_E\to \Spec(E)
$$ 
be the canonical morphism. As $E^\prime=H^0(X_{E^\prime},\mathcal{O}_{E^\prime})$ for every finite extension $E^\prime$ of $E$ we have
$$
f_\ast(\mathbb{G}_m)=\mathbb{G}_m.
$$
Using the Leray spectral sequence for $f$ and \Cref{theorem: brauer group of the curve} one obtains an exact sequence
$$
0\to H^1_{\acute{e}t}(X_E,\mathbb{G}_m)\to H^0(\Gal(\bar{E}/E),H^1_{\acute{e}t}(X_{\bar{E}}, \mathbb{G}_m))\to H^2_{\acute{e}t}(\Spec(E),\mathbb{G}_m)\to 0.
$$
But 
$$
H^1_{\acute{e}t}(X_E,\mathbb{G}_m)\cong \Z
$$ 
and 
$$
H^1_{\acute{e}t}(X_{\bar{E}},\mathbb{G}_m)\cong \Q
$$
with trivial $\Gal(\bar{E}/E)$-action.
In particular, we obtain the computation of the Brauer group of $E$
$$
\mathrm{Br}(E)\cong \Q/\Z
$$
in a rather complicated way. One should note that in order to deduce this theorem we implicitly used Steinbergs theorem that 
$$
H^1_{\acute{e}t}(\Gal(\bar{E}/{E^{\mathrm{un}}}), G)=1
$$ 
for a (connected) reductive group $G$ over the maximal unramified extension $E^{\mathrm{un}}$ of $E$ to deduce the concrete description of $B(G)$ (cf.\ \Cref{proposition: explicit description of b of g}) and therefore the surjectivity of 
$$
B(\mathrm{GL}_n)\to B(\mathrm{PGL}_n).
$$

\begin{theorem}
\label{theorem: etale cohomology of the curve}
Let $A$ be a discrete torsion module for $\Gal(\bar{E}/E)$ with $nA=0$ for some $n$ prime to the characteristic of $E$. Then for $i\leq 2$ the canonical morphism
$$
H^i(\Gal(\bar{E}/E),A)\to H^i_{\acute{e}t}(X_E,A)
$$  
is an isomorphism.
\end{theorem}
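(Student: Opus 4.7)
The plan is to apply the Leray spectral sequence for the structural morphism $f\colon X_E\to \Spec(E)$,
$$
E_2^{i,j}=H^i(\Gal(\bar E/E),R^jf_\ast A)\Rightarrow H^{i+j}_{\acute{e}t}(X_E,A),
$$
and to show that $R^jf_\ast A=0$ for $j=1,2$. The stalk of $R^jf_\ast A$ at the geometric point of $\Spec(E)$ defined by $\bar E$ equals $H^j_{\acute{e}t}(X_{\bar E},A)$, where $A$ becomes a constant sheaf (the Galois action is trivialised by the base change). Moreover $R^0f_\ast A=A$ as a Galois module by connectedness of $X_{\bar E}$, so, once the higher vanishings are established, the edge maps $H^i(\Gal(\bar E/E),A)\to H^i_{\acute{e}t}(X_E,A)$ are isomorphisms for $i\leq 2$.

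For $j=1$ the vanishing follows from \Cref{theorem: etale fundamental group of the curve}: it gives $\pi_1^{\acute{e}t}(X_{\bar E})=1$, whence $H^1_{\acute{e}t}(X_{\bar E},A)=0$ for every finite constant sheaf $A$. For $j=2$, a devissage on $A$ using short exact sequences of the form $0\to nA\to A\to A/nA\to 0$ reduces the claim to the case $A=\Z/n$. Since $\bar E$ contains a primitive $n$-th root of unity, the constant sheaf $\Z/n$ is isomorphic to $\mu_n$ on $X_{\bar E}$, and the Kummer sequence then yields an exact sequence
$$
\mathrm{Pic}(X_{\bar E})/n\to H^2_{\acute{e}t}(X_{\bar E},\mu_n)\to \mathrm{Br}(X_{\bar E})[n].
$$
Both outer terms vanish: the left one because $\mathrm{Pic}(X_{\bar E})\cong \Q$ (as used already after \Cref{theorem: brauer group of the curve}) is divisible, the right one because $\mathrm{Br}(X_{\bar E})=0$. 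The latter can be obtained either by rerunning the proof of \Cref{theorem: brauer group of the curve} over a growing finite extension $E^\prime/E$ and passing to the colimit, or concretely by noting that the pullback maps on $\mathrm{Pic}(X_{E^\prime})\cong \Z$ are multiplication by $[E^{\prime\prime}:E^\prime]$, so that every class in the direct limit $\varinjlim_{E^\prime} \mathrm{Pic}(X_{E^\prime})/n$ is eventually killed.

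The main obstacle I expect is this geometric Brauer vanishing $\mathrm{Br}(X_{\bar E})=0$; the rest is either formal (Leray plus edge maps) or already available in this paper (the fundamental group computation and $\mathrm{Pic}(X_{\bar E})\cong \Q$). With $R^jf_\ast A=0$ for $j=1,2$ in hand, the Leray spectral sequence collapses in the range $i+j\leq 2$ and yields the desired isomorphisms.
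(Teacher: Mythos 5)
Your argument follows the paper's approach essentially verbatim: Leray for $f\colon X_E\to\Spec(E)$, reduction to computing $H^j_{\acute{e}t}(X_{\bar E},-)$ for $j=1,2$, and the Kummer sequence together with $\mathrm{Pic}(X_{\bar E})\cong\Q$ (divisible, torsion-free) and $\mathrm{Br}(X_{\bar E})=0$; the paper just folds the $j=1$ case into the Kummer argument rather than citing $\pi_1^{\acute{e}t}(X_{\bar E})=1$ separately. The only wrinkle is that the devissage $0\to nA\to A\to A/nA\to 0$ is vacuous when $nA=0$; replace it with a composition series whose graded pieces are $\Z/p$ for primes $p\mid n$ (each $\cong\mu_p$ over $X_{\bar E}$), after which the rest goes through.
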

\begin{proof}
Let $f\colon X_E\to \Spec(E)$ be the canonical morphism. It suffices to prove
$$
R^if_\ast(\mu_l)=0
$$
for $i\leq 2$ and $l$ prime to $\mathrm{char}(E)$.
Because $l$ and $\mathrm{char}(E)$ are coprime the Kummer sequence
$$
0\to \mu_l\to \mathbb{G}_m\to \mathbb{G}_m\to 0
$$
is an exact sequence of \'etale sheaves. But
$$
H^0_{\acute{e}t}(X_{\bar{E}},\mathbb{G}_m)\cong \bar{E}^\times
$$
$$
H^1_{\acute{e}t}(X_{\bar{E}},\mathbb{G}_m)\cong \Q
$$
and 
$$
H^2_{\acute{e}t}(X_{\bar{E}},\mathbb{G}_m)\cong 0
$$
(cf.\ \Cref{theorem: brauer group of the curve}).
This implies the claim.
\end{proof}

Now we discuss the case $E=\F_q((t))$ with $p$-torsion coefficients using flat cohomology.

\begin{theorem}
\label{theorem: flat cohomology of the curve}  
Let $D$ be a finite multiplicative group scheme over $E$.
Then the canonical morphism
$$
H^i_{\mathrm{fl}}(\Spec(E),D)\to H^i_{\mathrm{fl}}(X_E,D)
$$
is an isomorphism for $i\leq 2$.
Moreover, if $E=\F_q((t))$ and $D=\F_p$, then
$$
H^i_{\mathrm{fl}}(\Spec(E),\F_p)\cong H^i_{\mathrm{fl}}(X_E,\F_p)
$$
for $i\geq 1$.
In particular, 
$$
H^i_{\mathrm{fl}}(X_E,\F_p)=0
$$
for $i\geq 2$.
\end{theorem}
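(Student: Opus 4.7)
The plan is to use the Leray spectral sequence for the structure morphism $f\colon X_E\to \Spec(E)$, reducing both statements to computations with $\mathbb{G}_m$ and $\mathbb{G}_a$ via the Kummer and Artin-Schreier sequences respectively.

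For the first statement, I would show that $f_\ast D\cong D$ and $R^qf_\ast D=0$ for $q=1,2$; the Leray spectral sequence then immediately yields the isomorphism in degrees $\leq 2$. The identification $f_\ast D\cong D$ follows from $\Gamma(X_{E^\prime},\mathcal{O}_{X_{E^\prime}})=E^\prime$ for every finite separable extension $E^\prime/E$. The vanishing of $R^qf_\ast D$ can be checked on geometric stalks, where it amounts to $H^q_{\mathrm{fl}}(X_{E^s},D)=0$ for $q=1,2$. Since $D$ is diagonalizable over $E^s$, we reduce to $D=\mu_n$ and apply the flat Kummer sequence
$$
0\to \mu_n\to \mathbb{G}_m\xrightarrow{n}\mathbb{G}_m\to 0,
$$
using that $H^0(X_{E^s},\mathbb{G}_m)=(E^s)^\times$ is $n$-divisible, that $H^1(X_{E^s},\mathbb{G}_m)\cong \mathbb{Q}$ is uniquely divisible, and that $H^2(X_{E^s},\mathbb{G}_m)=\mathrm{Br}(X_{E^s})=0$.

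For the second statement, note that $\F_p$ is étale so flat cohomology agrees with étale cohomology, and the Artin-Schreier sequence
$$
0\to \F_p\to \mathbb{G}_a\xrightarrow{F-1}\mathbb{G}_a\to 0
$$
is exact on the étale sites of $\Spec(E)$ and $X_E$. Hilbert 90 for the additive group yields $H^i_{\acute{e}t}(\Spec(E),\mathbb{G}_a)=0$ for $i\geq 1$, while $H^i_{\acute{e}t}(X_E,\mathbb{G}_a)=H^i(X_E,\mathcal{O}_{X_E})$ vanishes for $i\geq 1$ because $\mathcal{O}_{X_E}$ is semistable of slope $0$ (using the vanishing of $H^1$ for semistable bundles of nonnegative slope invoked in the proof of \Cref{theorem: factorization of admissible fiber functors}) and $X_E$ is one-dimensional. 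Comparing the Artin-Schreier long exact sequences then identifies both $H^i(\Spec(E),\F_p)$ and $H^i(X_E,\F_p)$ with $E/(F-1)E$ for $i=1$ and with $0$ for $i\geq 2$, with pullback respecting the identifications by naturality; the final vanishing statement is immediate.

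The main obstacle is the geometric input $H^2_{\acute{e}t}(X_{E^s},\mathbb{G}_m)=0$, since \Cref{theorem: brauer group of the curve} is stated only for the local field $E$. I plan to address this by writing $X_{E^s}=\varprojlim_{E^\prime/E}X_{E^\prime}$ over finite subextensions and using continuity of étale cohomology under such filtered limits of qcqs schemes with affine transition maps; the identification $H^1(X_{E^s},\mathbb{G}_m)\cong\mathbb{Q}$ then arises from the same colimit $\varinjlim \mathrm{Pic}(X_{E^\prime})$ along the multiplication-by-$[E^\prime:E]$ transition maps $\mathbb{Z}\to\mathbb{Z}$.
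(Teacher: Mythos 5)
Your overall strategy matches the paper's: Leray spectral sequence for $f\colon X_E\to\Spec(E)$, reducing the multiplicative case to the Kummer sequence and the $\F_p$-case to Artin-Schreier. For the Artin-Schreier part your execution differs slightly — you compare the long exact sequences directly on $\Spec(E)$ and $X_E$ (using $H^0(X_E,\mathcal{O}_{X_E})=E$ and $H^i(X_E,\mathcal{O}_{X_E})=0$ for $i\geq 1$), while the paper passes to the geometric fiber $X_{E^{\mathrm{alg}}}$ — and this variant is fine and arguably a bit more explicit about the identification with $E/(F-1)E$.

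There is, however, a genuine gap in your treatment of the multiplicative case. You compute the vanishing of $R^qf_\ast D$ at a ``geometric stalk'' over the separable closure $E^s$, but for the flat topology on $\Spec(E)$ the correct geometric base is the algebraic closure $E^{\mathrm{alg}}$ (finite flat extensions of $E$ include purely inseparable ones, and their union is $E^{\mathrm{alg}}$, not $E^s$). This is not a cosmetic point: your key input ``$H^0(X_{E^s},\mathbb{G}_m)=(E^s)^\times$ is $n$-divisible'' is \emph{false} when $\mathrm{char}(E)=p$ and $p\mid n$, since $E^s$ is not perfect; and $D$ a finite multiplicative group scheme over $E=\F_q((t))$ very much includes $\mu_{p^r}$, which is exactly the case where flat (rather than \'etale) cohomology is required. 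Concretely, the Kummer sequence over $X_{E^s}$ gives $H^1_{\mathrm{fl}}(X_{E^s},\mu_p)\cong(E^s)^\times/((E^s)^\times)^p\neq 0$. The fix is simply to replace $E^s$ by $E^{\mathrm{alg}}$ throughout this part, as the paper does: $D$ is still diagonalizable over $E^{\mathrm{alg}}$, $(E^{\mathrm{alg}})^\times$ is divisible (being a perfect algebraically closed field), and your limit argument for $H^1(X_{\cdot},\mathbb{G}_m)\cong\Q$ and $\mathrm{Br}(X_{\cdot})=0$ goes through verbatim over $E^{\mathrm{alg}}$ (the colimit of $\Z\xrightarrow{[E^\prime:E]}\Z$ over \emph{all} finite subextensions $E^\prime/E$, separable or not).
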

\begin{proof}
Let again $f\colon X_E\to \Spec(E)$ be the canonical morphism. As in \Cref{theorem: etale cohomology of the curve} it suffices to prove for every prime $l\in \Z$
$$
R^if_\ast(\mu_l)=0
$$
resp.\
$$
R^if_\ast(\F_p)=0
$$
for $1\leq i\leq 2$ resp.\ all $i\geq 1$.
Let $E^{\mathrm{alg}}$ be an algebraic closure of $E$. It suffices to prove
$$
H^i_{\mathrm{fl}}(X_{E^{\mathrm{alg}}},\mu_l)=0
$$
for $1\leq i\leq 2$ resp.\ 
$$
H^i_{\mathrm{fl}}(X_{E^{\mathrm{alg}}},\F_p)=0
$$
for $i\geq 1$.
For coefficients $\mu_l$ the statement follows as in \Cref{theorem: etale cohomology of the curve} using that for $l=p$ the Kummer sequence is exact for the flat topology.
The second statement follows using the Artin-Schreier sequence
$$
0\to \F_p\to \mathcal{O}_{X_{E^{\mathrm{alg}}}}\to \mathcal{O}_{X_{E^{\mathrm{alg}}}}\to 0
$$
together with the facts that
$$
H^i_{\mathrm{fl}}(X_{E^{\mathrm{alg}}},\mathcal{O}_{X_{E^{\mathrm{alg}}}})\cong H^i_{\acute{e}t}(X_{X_{E^{\mathrm{alg}}}},\mathcal{O}_{X_{E^{\mathrm{alg}}}})=0
$$
for $i\geq 1$ and $H^0(X_{E^{\mathrm{alg}}},\mathcal{O}_{X_{E^{\mathrm{alg}}}})\cong E^{\mathrm{alg}}$.
\end{proof}

In particular, we obtain that
$$
H^2_{\mathrm{fl}}(X_E,\mu_{p^r})\cong H^2_{\mathrm{fl}}(\Spec(E),\mu_{p^r})\cong \frac{1}{p^r}\Z/Z
$$
is canonically isomorphic to the $p^r$-torsion in the Brauer group $\mathrm{Br}(X_E)$ of $E$.

Finally, we record the calculation of $H^2_{\acute{e}t}(X_E,T)$ for an arbitrary torus $T$ over $E$ (cf.\ \cite[Th\'eor\'eme 2.7.]{fargues_g_torseurs_en_theorie_de_hodge_p_adique}).
 
\begin{theorem}
\label{theorem: second cohomology for tori}
Let $T$ be a torus over $E$. Then
$$
H^2_{\acute{e}t}(X_E,T)=H^2_{\mathrm{fl}}(X_E,T)=0.
$$
\end{theorem}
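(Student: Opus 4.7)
The plan is as follows. Every torus $T/E$ is smooth, so the canonical map $H^i_{\acute{e}t}(X_E, T) \to H^i_{\mathrm{fl}}(X_E, T)$ is an isomorphism for all $i$, and it suffices to prove the \'etale statement. The strategy is to embed $T$ in a quasi-trivial torus where the desired vanishing is an immediate consequence of \Cref{theorem: brauer group of the curve}, and then to control the resulting connecting map on $H^1$ via \Cref{theorem: fargues theorem} together with Steinberg's theorem.

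Concretely, I fix a finite Galois extension $E^\prime/E$ splitting $T$, with $\Gamma := \Gal(E^\prime/E)$, and choose a surjection of $\Gamma$-modules $\bigoplus_i \Z[\Gamma/H_i] \twoheadrightarrow X^\ast(T)$ from a permutation module onto the character lattice (take one generator of $X^\ast(T)$ per summand, with $H_i$ its $\Gamma$-stabilizer). Because $X^\ast(T)$ is torsion-free the kernel is again torsion-free, and dualizing yields a short exact sequence of $E$-tori
$$0 \to T \to T_1 \to T_2 \to 0,$$
where $T_1 = \prod_i \mathrm{Res}_{E_i/E}\mathbb{G}_m$ is quasi-trivial (with $E_i := (E^\prime)^{H_i}$) and $T_2$ is again a torus. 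Since the base-change morphisms $X_{E_i} \to X_E$ are finite, the Leray spectral sequence combined with \Cref{theorem: brauer group of the curve} gives $H^2_{\acute{e}t}(X_E, T_1) = \prod_i H^2_{\acute{e}t}(X_{E_i}, \mathbb{G}_m) = 0$, and the long exact sequence reduces the theorem to the surjectivity of
$$H^1_{\acute{e}t}(X_E, T_1) \to H^1_{\acute{e}t}(X_E, T_2).$$

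By functoriality of \Cref{theorem: fargues theorem}, this map is identified with $B(T_1) \to B(T_2)$. For commutative $T_i$, $\varphi_L$-conjugacy classes in $T_i(L)$ agree with cosets modulo the subgroup $\{t\varphi_L(t)^{-1} : t \in T_i(L)\}$, so surjectivity of $B(T_1) \to B(T_2)$ reduces to surjectivity of $T_1(L) \to T_2(L)$. The latter follows from $H^1(L, T) = 0$: since $L = \widehat{E^{\mathrm{un}}}$ has absolute Galois group $\Gal(\bar{E}/E^{\mathrm{un}})$, Steinberg's theorem (as invoked in the introduction) guarantees $H^1$-vanishing for the reductive group $T$ over this Galois group, and a glance at the long exact sequence for $0 \to T(L) \to T_1(L) \to T_2(L) \to H^1(L,T)$ concludes.

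The main obstacle is bookkeeping rather than mathematical: one must verify that $H^2_{\acute{e}t}(X_E, \mathrm{Res}_{E_i/E}\mathbb{G}_m) = H^2_{\acute{e}t}(X_{E_i}, \mathbb{G}_m)$ (immediate from the vanishing of higher direct images along the finite map $X_{E_i} \to X_E$) and that the bijection $H^1_{\acute{e}t}(X_E, S) \cong B(S)$ of \Cref{theorem: fargues theorem} is natural in the reductive $E$-group $S$. With these in hand the theorem reduces entirely to results already established in the excerpt.
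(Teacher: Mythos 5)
Your proposal is correct and follows essentially the same route as the paper: embed $T$ in a quasi-trivial torus whose $H^2$ on $X_E$ vanishes by the Brauer-group computation, then obtain the required surjectivity on $H^1$ via the identification $H^1_{\acute{e}t}(X_E,\,\cdot\,)\cong B(\cdot)$ of \Cref{theorem: fargues theorem} together with Steinberg's theorem. The only cosmetic difference is your choice of quasi-trivial envelope (a permutation resolution of $X^\ast(T)$), whereas the paper takes the slightly more direct unit-of-adjunction embedding $T\hookrightarrow \mathrm{Res}_{E'/E}T_{E'}$.
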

\begin{proof}
By \cite[Th\'eor\'eme (11.7)]{grothendieck_le_group_de_brauer_III} \'etale and flat cohomology for tori agree because tori are smooth. Let $E^\prime$ be a finite extension of $E$ splitting $T$. Then we obtain an exact sequence
$$
0\to T\to \mathrm{Res}_{E^\prime/E}T_{E^\prime}\to T^\prime\to 0
$$ 
of tori, where $\mathrm{Res}_{E^\prime/E}T$ denotes the Weil restriction of $T_{E^\prime}$ to $E$. By \Cref{theorem: fargues theorem} and \Cref{theorem: brauer group of the curve} we obtain a commutative diagram
$$
\xymatrix{
H^1_{\acute{e}t}(X_{E^\prime},T_{E^\prime})\ar[r]\ar[d]^\cong& H^1_{\acute{e}t}(X_E,T^\prime)\ar[r]\ar[d]^\cong& H^2_{\acute{e}t}(X_E,T)\ar[r] & H^2_{\acute{e}t}(X_{E^\prime},T_{E^\prime})=0 \\
B(\mathrm{Res}_{E^\prime/E}T_{E^\prime})\ar@{->>}[r] & B(T^\prime),
}
$$
where the lower horizontal arrow is surjective by Steinberg's theorem and \Cref{proposition: explicit description of b of g}.
This implies that 
$$
H^2_{\acute{e}t}(X_E,T)=0
$$
as desired.
\end{proof}

\Cref{theorem: second cohomology for tori} is related to Tate-Nakayama duality for tori (cf.\ \cite{tate_the_cohomology_groups_of_tori_in_finite_galois_extensions_of_number_fields}).
The pro\'etale covering $X_{\bar{E}}\to X_E$ yields a spectral sequence
$$
E^{ij}_2=H^i(\Gal(\bar{E}/E),H^j_{\acute{e}t}(X_{\bar{E}},T_{\bar{E}}))\Rightarrow H^{i+j}_{\acute{e}t}(X_E,T).
$$
But 
$$
H^1_{\acute{e}t}(X_{\bar{E}},T_{\bar{E}})\cong X_\ast(T)_{\Q}
$$ 
and 
$$
H^2_{\acute{e}t}(X_{\bar{E}},T_{\bar{E}})\cong 0,
$$ 
which implies by \Cref{theorem: second cohomology for tori} that there is an exact sequence
$$
H^1_{\acute{e}t}(X_E,T)\to H^0(\Gal(\bar{E}/E),X_\ast(T)_\Q)\to H^2(\Gal(\bar{E}/E),T(\bar{E}))\to 0
$$
where
$$
H^1_{\acute{e}t}(X_E,T)\cong B(T)\cong X_\ast(T)_\Gamma
$$
with $\Gamma=\Gal(\bar{E}/E)$ (cf.\ \cite{kottwitz_isocrystals_with_additional_structure_I}).

\section{Classification of reductive group schemes}

We continue with the notations from the previous two sections. 
We now want to classify reductive group schemes over the Fargues-Fontaine curve.
First we recall the definition of an affine group scheme $\mathbb{G}$ over $\varphi-\mathrm{Mod}_L$ (cf.\ \cite[Definition 9.1.8.]{dat_orlik_rapoport_period_domains}) as they provide the key examples of group schemes over $X_E$.

\begin{definition}
\label{definition: group scheme over isocrystals}
An affine group scheme $\mathbb{G}$ over $\varphi-\mathrm{Mod}_L$ is a Hopf algebra object $\mathcal{O}_{\mathbb{G}}$ in the category $\mathrm{Ind}-(\varphi-\mathrm{Mod}_L)$ of Ind-objects of $\varphi-\mathrm{Mod}_L$.
\end{definition}

In other words, an affine group scheme $\mathbb{G}$ over $\varphi-\mathrm{Mod}_L$ consists of an affine group scheme $G$ over $L$ and an isomorphism 
$$
\varphi_{\mathbb{G}}\colon\varphi^\ast_L G\cong G
$$ 
such that the Hopf algebra $\mathcal{O}_G$ of $G$ over $L$ is the increasing union of $\varphi_{\mathbb{G}}$-stable subspaces. Therefore we recover \cite[Definition 9.1.8.]{dat_orlik_rapoport_period_domains}. 
An affine group scheme $\mathbb{G}$ over $\varphi-\mathrm{Mod}_L$ is called reductive if the corresponding group scheme $G$ over $L$ is reductive.

If $G$ is an affine group scheme over $E$, then $G$ gives natural rise to an affine group scheme $\mathbb{G}$ over $\varphi-\mathrm{Mod}_L$ by considering the base change $G_L$ of $G$ to $L$ with its canonical isomorphism $\varphi^\ast_L(G_L)\cong G_L$. Equivalently, the Hopf algebra underlying $\mathbb{G}$ is given by $\mathcal{O}_G\otimes_EL$ with Frobenius acting on $L$.

Let $\mathbb{G}$ be an affine group scheme over $\varphi-\mathrm{Mod}_L$. Applying the tensor functor (cf.\ \Cref{theorem: classification of vector bundles})
$$
\mathcal{E}(-)\colon \varphi-\mathrm{Mod}_L\to \mathrm{Bun}_{X_E}
$$
to the Hopfalgebra $\mathcal{O}_{\mathbb{G}}$ yields a Hopf algebra 
$$
\mathcal{O}_{\mathcal{G}}:=\mathcal{E}(\mathcal{O}_{\mathbb{G}})
$$ 
over $\mathcal{O}_{X_E}$. Taking the relative Spec of this Hopf algebra 
$$
\mathcal{G}:=\underline{\Spec}(\mathcal{O}_{\mathcal{G}})
$$
defines a flat group scheme over $X_E$. We call $\mathcal{G}$ the group scheme (over $X_E$) associated with $\mathbb{G}$ and write 
$$
\mathcal{G}=\mathcal{E}(\mathbb{G})
$$ 
if we want to make this more precise. If $\mathbb{G}$ is reductive, then also $\mathcal{G}$ is reductive (over $X_E$) as this can be tested fiberwise and then over a large field extension of $E$.

We record the following general lemma.

\begin{lemma}
\label{lemma: inner forms}
Let $k$ be a field, let $G$ be an affine group scheme over $k$, let $G^\mathrm{ad}$ be its adjoint quotient and let $S$ be a $k$-scheme.
Let $\mathcal{Q}$ be a $G^\mathrm{ad}$-torsor over $S$ with corresponding inner form $\mathcal{G}_{\mathrm{Q}}$ of $G$ over $S$. Let 
$$
\omega_{\mathcal{Q}}\colon \Rep_k(G^{\mathrm{ad}})\to \Bun_{S}
$$ 
be the fiber functor of $\Rep_k(G^{\mathrm{ad}})$ over $S$ associated with $\mathcal{Q}$ (cf.\ \Cref{lemma: tannakian description of torsors}).
Then 
$$
\omega_{\mathcal{Q}}(\mathcal{O}_G)\cong \mathcal{O}_{\mathcal{G}_\mathcal{Q}}
$$
where $\mathcal{O}_G$ is considered as a Hopf algebra in the category of Ind-objects in $\mathrm{Rep}_k(G^\ad)$ via the adjoint action of $G^{\mathrm{ad}}$ and where $\mathcal{O}_{\mathcal{G}_\mathcal{Q}}$ denotes the $\mathcal{O}_S$-Hopf algebra of $\mathcal{G}_{\mathcal{Q}}$.  
\end{lemma}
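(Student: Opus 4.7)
The plan is to unwind both sides of the claimed isomorphism, observe that they are given by the same contracted-product construction, and then verify compatibility with the Hopf algebra structure. The underlying philosophy is that both constructions amount to \emph{twisting $\mathcal{O}_G$ by $\mathcal{Q}$ via the adjoint action of $G^{\mathrm{ad}}$}; one phrases this twist through the inner form $\mathcal{G}_{\mathcal{Q}}$, the other through the Tannakian fiber functor $\omega_{\mathcal{Q}}$.

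First I would recall the definition of the inner form. By construction, $\mathcal{G}_{\mathcal{Q}}$ is the contracted product $\mathcal{Q}\times^{G^{\mathrm{ad}}} G$, where $G^{\mathrm{ad}}$ acts on $G$ through the map $G^{\mathrm{ad}}\to \mathrm{Aut}(G)$, i.e., by conjugation. At the level of coordinate rings this is the same as descending $\mathcal{O}_G\otimes_k\mathcal{O}_{\mathcal{Q}}$ along $\mathcal{Q}\to S$ using the adjoint action of $G^{\mathrm{ad}}$ on $\mathcal{O}_G$. On the other hand, by \Cref{lemma: tannakian description of torsors}, $\omega_{\mathcal{Q}}$ sends a finite-dimensional $V\in \Rep_k(G^{\mathrm{ad}})$ to the associated vector bundle $\mathcal{Q}\times^{G^{\mathrm{ad}}} V$; extending to Ind-objects by filtered colimits and applying this to $\mathcal{O}_G$ (which is a filtered union of its finite-dimensional $G^{\mathrm{ad}}$-stable subspaces under the adjoint action), one obtains exactly $\mathcal{Q}\times^{G^{\mathrm{ad}}}\mathcal{O}_G$. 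Thus the underlying $\mathcal{O}_S$-modules coincide canonically.

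Next I would check that this canonical identification is one of Hopf algebras. The structure morphisms of $\mathcal{O}_G$ (comultiplication, counit, multiplication, unit, antipode) are all $G^{\mathrm{ad}}$-equivariant, because the adjoint action is by Hopf algebra automorphisms: this is precisely the statement that $G^{\mathrm{ad}}\to \mathrm{Aut}(G)$ factors through group scheme automorphisms. Since $\omega_{\mathcal{Q}}$ is an exact tensor functor (and commutes with filtered colimits when extended to Ind-objects), it sends these equivariant morphisms to morphisms of $\mathcal{O}_S$-modules that endow $\omega_{\mathcal{Q}}(\mathcal{O}_G)$ with a Hopf algebra structure. Under the identification of the previous paragraph, these morphisms match the Hopf algebra structure on $\mathcal{O}_{\mathcal{G}_{\mathcal{Q}}}$ obtained from the group scheme structure on $\mathcal{G}_{\mathcal{Q}}=\mathcal{Q}\times^{G^{\mathrm{ad}}} G$.

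Alternatively, one could reduce directly to the case where $\mathcal{Q}$ is trivial by pulling back along $\mathcal{Q}\to S$, where both sides become $\mathcal{O}_G\otimes_k\mathcal{O}_{\mathcal{Q}}$ with its obvious Hopf structure, and then descend along this fpqc cover using that the descent data on both sides is given by the adjoint action of $G^{\mathrm{ad}}$. The main (mild) obstacle in either approach is bookkeeping for Ind-objects: one must verify that $\omega_{\mathcal{Q}}$, a priori defined on $\Rep_k(G^{\mathrm{ad}})$, extends to Ind-representations in a way compatible with the tensor structure so that the Hopf algebra diagrams transport. Since $\omega_{\mathcal{Q}}$ is right-exact and commutes with filtered colimits on Ind-objects, this compatibility is automatic.
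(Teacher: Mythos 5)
Your proof is correct and follows essentially the same approach as the paper: identify $\omega_{\mathcal{Q}}(\mathcal{O}_G)$ and $\mathcal{O}_{\mathcal{G}_{\mathcal{Q}}}$ both as the contracted product $\mathcal{Q}\times^{G^{\mathrm{ad}}}\mathcal{O}_G$, with Hopf-algebra compatibility coming from $G^{\mathrm{ad}}$-equivariance of the structure maps. The paper compresses this into a single sentence (``this twisting can be done on the Hopf algebra of $G$''); your write-up spells out the Ind-object bookkeeping and equivariance check that the paper leaves implicit, which is a reasonable thing to do but not a different argument.
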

\begin{proof}
  The fiber functor $\omega_{\mathcal{Q}}$ of $\Rep_k(G^\ad)$ is given by
$$
\omega_{\mathcal{Q}}\colon \Rep_k(G^\ad)\to \Bun_{S},\ V\mapsto \mathcal{Q}\times^{G^\ad}(V\otimes_k \mathcal{O}_S). 
$$
Moreover, the inner form $\mathcal{G}_{\mathcal{Q}}$ of $G$ is by definition given by the group scheme 
$$
\mathcal{G}_{\mathcal{Q}}=\mathcal{Q}\times^{G^\ad}G
$$ over $S$. Equivalently, this twisting can be done on the Hopf algebra of $G$. This shows 
$$
\omega_{\mathcal{Q}}(\mathcal{O}_G)\cong \mathcal{Q}\times^{G^\ad}\mathcal{O}_G\cong \mathcal{O}_{\mathcal{G}_{\mathcal{Q}}}
$$ 
and the proof is finished.
\end{proof}

We can now prove the following classification of reductive group schemes over the Fargues-Fontaine curve.

\begin{theorem}
\label{theorem: classification of reductive group schemes}
Let $\mathcal{G}$ be a reductive group scheme over the Fargues-Fontaine curve $X_E$.
Then there exists a reductive group scheme $\mathbb{G}$ over $\varphi-\mathrm{Mod}_L$, unique up to isomorphism, such that $\mathcal{G}$ is isomorphic to the group scheme $\mathcal{E}(\mathbb{G})$ associated with $\mathbb{G}$.
There exists moreover a quasi-split group $G^\prime$ over $E$ such that $\mathcal{G}$ is an inner form (over $X_E$) of $G^\prime$.
\end{theorem}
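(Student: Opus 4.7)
The plan is to combine \Cref{theorem: fargues theorem} (classification of torsors for constant reductive groups) with \Cref{theorem: classification of admissible fiber functors} (admissible fiber functors factor through isocrystals) and \Cref{theorem: etale fundamental group of the curve} (computation of $\pi_1^{\acute{e}t}(X_E)$), in order to reduce the problem to an inner twist of a group coming from $E$.

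First I would produce a quasi-split inner form of $\mathcal{G}$ that is defined over $E$. Since $X_E$ is connected, the root datum of $\mathcal{G}$ is constant; fix a split Chevalley form $G_0/E$ with this root datum. The isomorphism class of $\mathcal{G}$ corresponds to a class in $H^1_{\acute{e}t}(X_E,\mathrm{Aut}(G_0))$, whose image in $H^1_{\acute{e}t}(X_E,\mathrm{Out}(G_0))$ is the outer class of $\mathcal{G}$. Because $\mathrm{Out}(G_0)$ is a constant finite \'etale group scheme, \Cref{theorem: etale fundamental group of the curve} yields
$$
H^1_{\acute{e}t}(X_E,\mathrm{Out}(G_0))\cong H^1(\Gal(\bar{E}/E),\mathrm{Out}(G_0)),
$$
so the outer class descends to $\Spec(E)$ and defines a quasi-split inner form $G'$ of $G_0$ over $E$ sharing the outer type of $\mathcal{G}$. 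Hence $\mathcal{G}$ is an inner twist of $G'_{X_E}$ by a $G'^{\mathrm{ad}}$-torsor $\mathcal{Q}$ on $X_E$; this already proves the last assertion of the theorem.

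Next I would pass to isocrystals. By \Cref{theorem: fargues theorem} applied to the reductive group $G'^{\mathrm{ad}}$, the fiber functor $\omega_{\mathcal{Q}}\colon \Rep_E(G'^{\mathrm{ad}})\to \Bun_{X_E}$ attached to $\mathcal{Q}$ (cf.\ \Cref{lemma: tannakian description of torsors}) is admissible, so \Cref{theorem: classification of admissible fiber functors} supplies an exact tensor functor
$$
\omega'\colon \Rep_E(G'^{\mathrm{ad}})\to \varphi-\mathrm{Mod}_L
$$
with $\mathcal{E}(-)\circ \omega'\cong \omega_{\mathcal{Q}}$. Applying the Ind-extension of $\omega'$ to the Hopf algebra $\mathcal{O}_{G'}$ of $G'$, viewed inside $\mathrm{Ind}$-$\Rep_E(G'^{\mathrm{ad}})$ via the adjoint action, produces a Hopf algebra object $\mathcal{O}_{\mathbb{G}}:=\omega'(\mathcal{O}_{G'})$ in $\mathrm{Ind}$-$(\varphi-\mathrm{Mod}_L)$, that is, an affine group scheme $\mathbb{G}$ over $\varphi-\mathrm{Mod}_L$ (\Cref{definition: group scheme over isocrystals}). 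The same twisting argument as in \Cref{lemma: inner forms}, but carried out inside the Tannakian category $\varphi-\mathrm{Mod}_L$, shows that the underlying $L$-group of $\mathbb{G}$ is the inner twist of $G'_L$ by the $G'^{\mathrm{ad}}$-torsor of fiber functors of $\omega'$; in particular, $\mathbb{G}$ is reductive.

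Finally, using $\mathcal{E}(-)\circ \omega'\cong \omega_{\mathcal{Q}}$ together with \Cref{lemma: inner forms} on $X_E$ one obtains
$$
\mathcal{E}(\mathbb{G})=\underline{\Spec}\bigl(\mathcal{E}(\omega'(\mathcal{O}_{G'}))\bigr)\cong \underline{\Spec}\bigl(\omega_{\mathcal{Q}}(\mathcal{O}_{G'})\bigr)\cong \underline{\Spec}(\mathcal{O}_{\mathcal{G}})=\mathcal{G}.
$$
Uniqueness of $\mathbb{G}$ up to isomorphism is a consequence of the bijection between isomorphism classes in \Cref{theorem: classification of admissible fiber functors}, which transports the Hopf algebra data of $\mathcal{O}_{G'}$. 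The main obstacle is the outer-form descent in the first step: one needs the constancy of the root datum over the connected base $X_E$ together with \Cref{theorem: etale fundamental group of the curve} to know that the outer class really descends from $X_E$ to $\Spec(E)$. Once that step is in place, the remainder is essentially Tannakian bookkeeping using material already developed in \Cref{section: admissible fiber functors}.
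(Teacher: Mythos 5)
Your argument is correct and follows essentially the same route as the paper: you descend the outer class to $E$ using the constancy of $\mathrm{Out}(G_0)$ together with \Cref{theorem: etale fundamental group of the curve}, realize $\mathcal{G}$ as an inner twist of the quasi-split constant form $G'$ by a $(G')^{\mathrm{ad}}$-torsor $\mathcal{Q}$, factor the associated admissible fiber functor through isocrystals via \Cref{theorem: classification of admissible fiber functors} (the paper invokes \Cref{theorem: factorization of admissible fiber functors} at this spot, but the two are interchangeable here), and twist $\mathcal{O}_{G'}$ by $\omega'$ to produce $\mathbb{G}$; uniqueness follows because $\mathrm{gr}\circ\mathrm{HN}\circ\mathcal{E}(-)\cong\mathrm{Id}$ recovers $\mathbb{G}$ from $\mathcal{G}$. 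The only cosmetic slip is calling $\mathrm{Out}(G_0)$ ``finite''---for a general split reductive $G_0$ it need only be a constant group scheme, which is all the argument actually uses.
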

\begin{proof}
Let $G_0$ be the split reductive group over $E$ such that $\mathcal{G}$ is a form of $G_0$. 
Consider the canonical exact sequence
$$
1\to G^\ad_0\to \Aut(G_0)\to \mathrm{Out}(G_0)\to 1
$$
of group schemes over $X_E$. It gives rise to an exact sequence of pointed sets
$$
H^1_{\acute{e}t}(X_E,G^\ad_0)\to H^1_{\acute{e}t}(X_E,\Aut(G_0)) \xrightarrow{\delta} H^1_{\acute{e}t}(X_E,\mathrm{Out}(G_0)).
$$
But 
$$
H^1(\Gal(\bar{E}/E),\mathrm{Out}(G_0))\cong H^1_{\acute{e}t}(X_E,\mathrm{Out}(G_0))
$$ 
because $\mathrm{Out}(G_0)$ is constant and
$$
\pi_1^{\acute{e}t}(X_E)\cong \Gal(\bar{E}/E)
$$
by \Cref{theorem: etale fundamental group of the curve}.
The choice of a pinning of $G_0$ defines a splitting of $\delta$ and the image of 
$$
H^1(\Gal(\bar{E}/E),\mathrm{Out}(G_0))\cong H^1_{\acute{e}t}(X_E,\mathrm{Out}(G_0))
$$ under this splitting consists precisely of the classes of constant reductive group schemes 
$$
G^\prime\times_{\Spec(E)}X_E
$$ 
for $G^\prime$ a quasi-split form of $G_0$ over $E$.
Moreover, the elements in a fiber of $\delta$ are all inner forms of each other. In particular, we can see that $\mathcal{G}$ is an inner form (over $X_E$) of a quasi-split form $G^\prime$ (over $E$) of $G_0$.

Let $\mathcal{Q}$ be the $(G^\prime)^\ad$-torsor over $X_E$ and let 
$$
\omega_{\mathcal{Q}}\colon \Rep_E((G^\prime)^\ad)\to \mathrm{Bun}_{X_E}
$$
be the corresponding fiber functor. By \Cref{lemma: inner forms} we obtain
$$
\mathcal{O}_{\mathcal{G}}\cong \omega_{\mathcal{Q}}(\mathcal{O}_{G^\prime})
$$
where $\mathcal{O}_{G^\prime}$ is considered as a $(G^\prime)^\ad$-representation via the adjoint action.
By \Cref{theorem: fargues theorem} we know that the fiber functor $\omega_{\mathcal{Q}}$ is admissible and we can apply theorem \Cref{theorem: factorization of admissible fiber functors}. This yields an exact tensor functor
$$
\omega^\prime\colon \Rep_E((G^\prime)^\ad)\to \varphi-\mathrm{Mod}_L
$$
such that 
$$
\omega_{\mathcal{Q}}\cong \mathcal{E}(-)\circ \omega^\prime.
$$ 
In particular, we see that $\mathcal{G}$ is isomorphic to the group scheme associated with the group scheme $\mathbb{G}$ given by the Hopf algebra
$$
\omega^\prime(\mathcal{O}_{G^\prime})
$$
over $\varphi-\mathrm{Mod}_L$.
Moreover, as $\mathcal{G}$ is reductive, $\mathbb{G}$ is reductive.
As 
$$
\mathrm{gr}\circ \mathrm{HN}\circ \mathcal{E}(-)\cong \mathrm{Id}_{\varphi-\mathrm{Mod}_L}
$$ 
we see that $\mathbb{G}$ is determined, up to isomorphism, by $\mathcal{G}$.
\end{proof}

In short, the functor from reductive group schemes over $\varphi-\mathrm{Mod}_L$ to reductive group schemes over $X_E$ is faithful and induces a bijection on isomorphism classes. But it is not an equivalence, for $\mathcal{E}=\mathcal{O}\oplus \mathcal{O}(1)$ and $\mathcal{G}:=\mathrm{GL}(\mathcal{E})$ the global sections $\mathcal{G}(X_E)$ depend on the chosen algebraically closed perfectoid field $F/\F_q$. 

We can moreover obtain the following result.

\begin{lemma}
\label{lemma: finite extension yields pure inner form}
Let $\tilde{E}/E$ be an algebraic extension of $E$ such that $\tilde{E}$ has cohomological dimension $1$ and, if $E=\F_q((t))$, contains the perfection of $E$. Let $\mathcal{G}$ be a reductive group scheme over $X_E$. Then there exists a finite subextension $E^\prime\subseteq \tilde{E}$ such that the base change $\mathcal{G}_{X_E^\prime}$ is a pure inner form of a quasi-split group $G^\prime$ over $E^\prime$.
\end{lemma}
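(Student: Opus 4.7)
The plan is to use the standard obstruction theory for pure inner forms together with a cohomological vanishing on $X_{\tilde E}$ (interpreted as a cofiltered limit of the $X_{E'}$). By \Cref{theorem: classification of reductive group schemes}, $\mathcal{G}$ is an inner form over $X_E$ of some quasi-split reductive $G_0/E$, classified by a class $[\mathcal{G}] \in H^1_{\mathrm{fl}}(X_E,G_0^{\ad})$. Write $Z := Z(G_0)$, a group scheme of multiplicative type over $E$. For every finite subextension $E \subseteq E' \subseteq \tilde E$, the base change $G' := G_{0,E'}$ is quasi-split over $E'$, and $\mathcal{G}_{X_{E'}}$ is a pure inner form of $G'$ over $X_{E'}$ if and only if the image of $[\mathcal{G}]$ under the coboundary
$$
\delta\colon H^1_{\mathrm{fl}}(X_{E'},G_0^{\ad}) \to H^2_{\mathrm{fl}}(X_{E'},Z_{E'})
$$
arising from $1\to Z \to G_0 \to G_0^{\ad}\to 1$ vanishes. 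Since flat cohomology commutes with cofiltered limits of qcqs schemes along affine transitions, it therefore suffices to prove
$$
\varinjlim_{E'/E\text{ finite},\ E'\subseteq\tilde E} H^2_{\mathrm{fl}}(X_{E'},Z_{E'}) = 0.
$$

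\textbf{Vanishing on $X_{\tilde E}$.} Use the canonical short exact sequence $1 \to T \to Z \to F \to 1$ of multiplicative-type groups over $E$, with $T := Z^0$ a torus and $F := \pi_0(Z) = Z/Z^0$ a finite group scheme of multiplicative type. The long exact sequence in flat cohomology reduces the vanishing to that for $T$ and for $F$ separately. For the torus $T$, \Cref{theorem: second cohomology for tori} applied at each finite level yields $H^2_{\mathrm{fl}}(X_{E'},T_{E'}) = 0$, so the colimit vanishes. For the finite group $F$, \Cref{theorem: flat cohomology of the curve} produces an isomorphism $H^2_{\mathrm{fl}}(X_{E'},F_{E'}) \cong H^2_{\mathrm{fl}}(\Spec(E'),F_{E'})$ at each finite level; passing to the colimit we reduce to showing $H^2_{\mathrm{fl}}(\Spec(\tilde E),F_{\tilde E}) = 0$. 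The cohomological-dimension hypothesis does this: Hilbert~90 together with $\mathrm{cd}(\tilde E)\le 1$ forces $\mathrm{Br}(\tilde E) = 0$, the flat Kummer sequence then gives $H^2_{\mathrm{fl}}(\tilde E,\mu_n) = 0$ for all $n$, and a d\'evissage along a composition series of $F$ (after a harmless preliminary finite extension inside $\tilde E$ to split the Galois action on the Cartier dual of $F$) handles the general finite multiplicative-type case.

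\textbf{Main obstacle.} The delicate point is the $p$-primary component of $F$ in equal characteristic $p$: there $F$ is non-\'etale, so Galois cohomology of the $\overline{\tilde E}$-points is insufficient and one must work with flat cohomology throughout. The hypothesis that $\tilde E$ contain the perfection of $E$ is precisely what ensures that $\tilde E$ is perfect, so that the Brauer-group vanishing and the flat Kummer argument carry over to $\mu_{p^k}$ and hence to arbitrary finite multiplicative-type $F$; conversely this is why the statement of the lemma imposes this hypothesis only in equal characteristic.
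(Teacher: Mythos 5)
Your proposal shares the paper's skeleton (reduce to killing the coboundary class in $H^2$ of the center $Z := Z(G')$ over $X_{\tilde E}=\varprojlim X_{E'}$) but takes a genuinely different d\'evissage of $Z$. The paper \emph{embeds} $Z$ in a maximal torus $T\subseteq G'$, obtaining $0\to Z\to T\to\tilde T\to 0$ with both $T$ and $\tilde T$ tori; vanishing of $H^2(X_{\tilde E},Z)$ then follows from $H^2(X_{\tilde E},T)=0$ (\Cref{theorem: second cohomology for tori}) together with surjectivity of $H^1(X_{\tilde E},T)\to H^1(X_{\tilde E},\tilde T)$, the latter deduced from the cohomological-dimension hypothesis via the identification $H^1(X_{\tilde E},T)\cong X_\ast(T)_\Q$ in the limit (cd$\le1$ plus perfection guarantee that the degrees $[E':E]$ are cofinal in divisibility). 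Your d\'evissage goes the other way, cutting $Z$ into a subtorus and a finite multiplicative quotient, and therefore needs \Cref{theorem: flat cohomology of the curve} and a Brauer-group vanishing that the paper never invokes. Both routes are legitimate; the paper's stays entirely inside tori, which is why it is shorter.

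Two genuine gaps remain in your write-up. First, the identification $T:=Z^0$, $F:=\pi_0(Z)$ is incorrect in equal characteristic: for a multiplicative-type group over a field of characteristic $p$, the infinitesimal $p$-part (e.g.\ $\mu_{p^k}$) sits inside $Z^0$, so $Z^0$ is \emph{not} a torus, while $\pi_0(Z)=Z/Z^0$ is always \'etale --- which directly contradicts your closing remark that ``$F$ is non-\'etale'' at $p$. The sequence you want is $1\to T\to Z\to F\to 1$ with $T$ the \emph{maximal subtorus} of $Z$ (dual to $X^\ast(Z)/X^\ast(Z)_{\mathrm{tors}}$) and $F$ the finite multiplicative quotient (dual to $X^\ast(Z)_{\mathrm{tors}}$). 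Second, the ``harmless preliminary finite extension inside $\tilde E$ to split the Galois action on the Cartier dual of $F$'' does not exist in general: $\tilde E$ is assumed only to have cd$\le1$, so $\Gal(\bar{\tilde E}/\tilde E)$ may act nontrivially on $X^\ast(F)$, and no finite subextension kills that action. To finish the $F$-step you should instead embed $F$ in a torus, $1\to F\to T_1\to T_2\to 1$, and conclude $H^2_{\mathrm{fl}}(\tilde E,F)=0$ from $H^1(\tilde E,T_2)=0$ (Hilbert~90) and $H^2(\tilde E,T_1)=0$ (which follows from cd$\le1$, using perfectness of $\tilde E$ to handle the $p$-part). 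Once you do this your proof has essentially collapsed into the paper's, which is a sign that embedding $Z$ in a torus from the start is the more economical route.
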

\begin{proof}
By \Cref{theorem: classification of reductive group schemes} we already know that $\mathcal{G}$ is an inner form of $G^\prime$ for a quasi-split group $G^\prime$ over $E$.
There exists an exact sequence (in the flat topology)
$$
1\to Z(G^\prime)\to G^\prime\to (G^\prime)^\ad\to 0
$$
yielding an exact sequence
$$
H^1_{\mathrm{fl}}(X_E,G^\prime)\to H^1_{\mathrm{fl}}(X_E,(G^\prime)^\ad)\to H^2_{\mathrm{fl}}(X_E,Z(G)). 
$$
Thus it suffices to show that every element in $H^2_{\mathrm{fl}}(X_E,Z(G^\prime))$ maps to zero under a finite extension contained in $\tilde{E}$. Let $T\subseteq G^\prime$ be a maximal torus and consider the exact sequence
$$
0\to Z(G^\prime)\to T\to \tilde{T}\to 0.
$$
Using the associated long exact sequence and \Cref{theorem: second cohomology for tori} it suffices to prove that
$$
\mathrm{Coker}(H^1_{\mathrm{fl}}(X_{\tilde{E}},T)\to H^1_{\mathrm{fl}}(X_E,\tilde{T}))=0.
$$
As $\tilde{E}$ has cohomological dimension $\leq 1$ (and contains the perfection of $E$ if $E=\F_q((t))$) the degrees of finite subextensions $E^\prime/E$ get divisible by arbitrary large integers $n\in \Z$.
We can conclude
$$
H^1_{\mathrm{fl}}(X_{\tilde{E}},T)\cong X_\ast(T)_{\Q}
$$
and
$$
H^1_{\mathrm{fl}}(X_{\tilde{E}},\tilde{T})\cong X_{\ast}(\tilde{T})_{\Q}.
$$
As $T\to \tilde{T}$ is surjective the morphism
$$
X_\ast(T)_{\Q}\to X_{\ast}(\tilde{T})_\Q
$$
is surjective. This finishes the proof of the lemma.
\end{proof}

For example, looking at the proof shows that one can take $\tilde{E}$ also to be the maximal unramified extension of $E$. But one can also take $\tilde{E}$ as the composition of a totally ramified extension and the perfection of $E$, if $E=\F_q((t))$.

We now start to classify torsors under reductive group schemes over $X_E$.

\begin{definition}
\label{definition: representations of group schemes over isocrystals}
Let $\mathbb{G}$ be a group scheme over $\varphi-\mathrm{Mod}_L$. We define $\Rep_E(\mathbb{G})$ as the category of representations of $\mathbb{G}$ on isocrystals, i.e., as the category of finite dimensional comodules of the Hopf algebra $\mathcal{O}_{\mathbb{G}}$ in the category $\varphi-\mathrm{Mod}_L$.
\end{definition}

In other words, an object $V\in \Rep_E(\mathbb{G})$ consists of an isocrystal $V\in \varphi-\mathrm{Mod}_L$ and a coaction $V\to V\otimes_L \mathcal{O}_{\mathbb{G}}$ which is moreover a morphism of Ind-isocrystals.
Clearly, the category $\Rep_E(\mathbb{G})$ is Tannakian over $E$.\footnote{We mention the following possible cause of confusion. If $G$ is a reductive group over $E$ with associated group scheme $\mathbb{G}$ over $\varphi-\mathrm{Mod}_L$, then the category of finite dimensional $E$-representations $\Rep_E(G)$ of $G$ is the full subcategory of $\Rep_E(\mathbb{G})$ given by representations of $\mathbb{G}$ whose underlying isocrystal is semistable of slope $0$.}
We denote by 
$$
\omega_{\mathrm{can}}^\prime\colon \Rep_E(\mathbb{G})\to \varphi-\mathrm{Mod}_L
$$
the canonical exact tensor functor sending a $\mathbb{G}$-representation to its underlying isocrystal. We define 
$$
\omega_{\mathrm{can}}:=\mathcal{E}(-)\circ \omega_{\mathrm{can}}^\prime\colon \Rep_E(\mathbb{G})\to \mathrm{Bun}_{X_E}, 
$$
and call it the canonical fiber functor of $\Rep_E(\mathbb{G})$ over $X_E$.

\begin{lemma}
\label{lemma: automorphisms of fiber functor}
Let $\mathbb{G}$ be a group scheme over $\varphi-\mathrm{Mod}_L$. Let 
$$
\omega\colon \Rep_E(\mathbb{G})\to \Bun_{S}
$$ 
be a fiber functor of $\Rep_E(\mathbb{G})$ over $S$ and let 
$$
\mathcal{G}:=\Aut^\otimes(\omega)
$$ 
be the corresponding group scheme over $S$. Let 
$$
\mathcal{O}_{\mathbb{G}}\in \mathrm{Ind}-\Rep_E(\mathbb{G})
$$ 
be the Hopf algebra underlying $\mathbb{G}$ considered as a representation of $\mathbb{G}$ via the adjoint action. Then there is a natural isomorphism 
$$
\mathcal{O}_{\mathcal{G}}\cong \omega(\mathcal{O}_{\mathbb{G}})
$$ 
of Hopf algebras. In other words, $\mathcal{O}_{\mathbb{G}}$ with the adjoint action by $\mathbb{G}$ is the fundamental group of $\Rep_E(\mathbb{G})$.
\end{lemma}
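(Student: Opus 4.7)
The strategy is to invoke the general Tannakian identity $\omega(\mathcal{O}_{\pi(\mathcal{T})})\cong\mathcal{O}_{\Aut^\otimes(\omega)}$ recalled in the excerpt just before \Cref{theorem: results about q-filtered fiber functors}, applied to $\mathcal{T}=\Rep_E(\mathbb{G})$. Once invoked, the lemma becomes equivalent to the identification $\pi(\Rep_E(\mathbb{G}))\cong\mathbb{G}$ with conjugation self-action, which is the direct analog of the classical $\pi(\Rep_k(G))\cong G$ for an affine group scheme $G$ over a field, and the plan is to adapt the classical argument.

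By definition of $\Rep_E(\mathbb{G})$, each $V$ is equipped with a coaction $V\to V\otimes_L\mathcal{O}_{\mathbb{G}}$. Viewing $\mathcal{O}_{\mathbb{G}}$ as a comodule over itself via the conjugation coaction turns it into a Hopf algebra object $\mathbb{G}_{\ad}$ of $\mathrm{Ind}-\Rep_E(\mathbb{G})$, and the collection of coactions provides a tensor-functorial action of $\mathbb{G}_{\ad}$ on the identity functor of $\Rep_E(\mathbb{G})$. The universal property of $\pi(\Rep_E(\mathbb{G}))$ as the group scheme of tensor automorphisms of the identity then provides a canonical morphism $\mathbb{G}_{\ad}\to\pi(\Rep_E(\mathbb{G}))$. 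To show it is an isomorphism I apply the canonical fiber functor $\omega_{\mathrm{can}}=\mathcal{E}(-)\circ\omega_{\mathrm{can}}^\prime$, which is faithful by \Cref{theorem: classification of vector bundles}. On the source one has $\omega_{\mathrm{can}}(\mathbb{G}_{\ad})=\mathcal{E}(\mathcal{O}_{\mathbb{G}})=\mathcal{O}_{\mathcal{G}_0}$ with $\mathcal{G}_0=\mathcal{E}(\mathbb{G})$; on the target one obtains $\mathcal{O}_{\Aut^\otimes(\omega_{\mathrm{can}})}$ by the Tannakian formula; and the induced map $\mathcal{G}_0\to\Aut^\otimes(\omega_{\mathrm{can}})$ is the natural one coming from the $\mathcal{G}_0$-action on $\omega_{\mathrm{can}}(V)=\mathcal{E}(V)$ obtained by pushing forward the coactions. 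Standard Tannaka duality, applied fpqc-locally on $X_E$ to reduce to the neutral case, shows this map is an isomorphism, and faithfulness of $\omega_{\mathrm{can}}$ then promotes the conclusion to the desired isomorphism $\mathbb{G}_{\ad}\cong\pi(\Rep_E(\mathbb{G}))$ of Hopf algebras in $\mathrm{Ind}-\Rep_E(\mathbb{G})$.

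The main obstacle is the last Tannaka-duality step: because $X_E$ is a curve rather than a field, the statement $\mathcal{G}_0\xrightarrow{\sim}\Aut^\otimes(\omega_{\mathrm{can}})$ is not the classical neutral-Tannakian identification but must be proved after trivializing fpqc-locally on $X_E$. Equivalently, one shows directly that for every $X_E$-scheme $T$, a compatible system of tensor automorphisms of the $\mathcal{E}(V)|_T$ is precisely a $T$-point of $\mathcal{G}_0$; this is formal once one observes that the relevant data factors through the universal property of $\mathcal{E}(\mathcal{O}_{\mathbb{G}})$ as the Hopf algebra of $\mathcal{G}_0$.
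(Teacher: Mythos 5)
Your high-level route is genuinely different from the paper's: you reformulate the lemma as the identification $\pi(\Rep_E(\mathbb{G}))\cong\mathbb{G}_{\ad}$ in $\mathrm{Ind}\text{-}\Rep_E(\mathbb{G})$, construct the canonical morphism $\mathbb{G}_{\ad}\to\pi(\Rep_E(\mathbb{G}))$ from the coactions, and then try to check it is an isomorphism by pushing forward along the faithful, exact functor $\omega_{\mathrm{can}}$ and reflecting. The paper instead proves the statement directly for an arbitrary $\omega$ over an arbitrary $S$, by exhibiting a bijection between $\mathcal{O}_S$-algebra maps $\omega(\mathcal{O}_{\mathbb{G}})\to\mathcal{R}$ and tensor automorphisms of $\omega\otimes\mathcal{R}$ using the comultiplication in one direction and evaluation at the Ind-object $\mathcal{O}_{\mathbb{G}}$ composed with the counit in the other, and verifying the two constructions are inverse.

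There is a gap in your key step. To show the pushed-forward morphism $\mathcal{G}_0=\mathcal{E}(\mathbb{G})\to\Aut^\otimes(\omega_{\mathrm{can}})$ is an isomorphism you invoke ``standard Tannaka duality, applied fpqc-locally on $X_E$ to reduce to the neutral case.'' This does not work as stated: $\Rep_E(\mathbb{G})$ is a Tannakian category over the field $E$, and fpqc-localizing on the $E$-scheme $X_E$ does not turn it into $\Rep$ of a group scheme over a field, so there is no ``neutral case'' being reached. Your fallback remark --- that one should check directly that for every $X_E$-scheme $T$ a compatible system of tensor automorphisms of the $\mathcal{E}(V)|_T$ is a $T$-point of $\mathcal{G}_0$, via the universal property of $\mathcal{E}(\mathcal{O}_{\mathbb{G}})$ --- is the right move, but you declare it ``formal'' without carrying it out, and this verification \emph{is} the content of the lemma (specialized to $\omega_{\mathrm{can}}$); it is exactly the comultiplication/counit argument the paper performs. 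Once you are willing to do that check, the detour via $\omega_{\mathrm{can}}$ and faithfulness-reflection is superfluous overhead: the identical argument works uniformly for any $\omega$ over any $S$ and immediately gives the lemma, and also gives the $\pi(\Rep_E(\mathbb{G}))\cong\mathbb{G}_{\ad}$ interpretation as a consequence rather than a prerequisite. A secondary caution: to reflect the isomorphism back from $\mathrm{QCoh}(X_E)$ to $\mathrm{Ind}\text{-}\Rep_E(\mathbb{G})$ you need conservativity, not bare faithfulness; this does follow from faithfulness plus exactness of the Ind-extension between abelian categories, but it should be said.
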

\begin{proof}
Let $\mathcal{R}$ be a quasi-coherent $\mathcal{O}_S$-algebra and let 
$$
f\colon \omega(\mathcal{O}_{\mathbb{G}})\to \mathcal{R}
$$ 
be a morphism of $\mathcal{O}_S$-algebras.
Then for $V\in \Rep_E(\mathbb{G})$ the composition
$$
\omega(V)\otimes_{\mathcal{O}_S}\mathcal{R}\to \omega(V)\otimes_{\mathcal{O}_S} \omega(\mathcal{O}_{\mathbb{G}})\otimes \mathcal{R}\to \omega(V)\otimes_{\mathcal{O}_S}\mathcal{R}, 
$$
where the first morphism is induced by the comultiplication of $V$, which is a morphism of $\mathbb{G}$-representations if $\mathcal{O}_{\mathbb{G}}$ is equipped with the adjoint action, and the second by $f$ and multiplication in $\mathcal{R}$,
is natural in $V$ and a naturally a tensor automorphism, hence defines a $\underline{\Spec}(\mathcal{R})$-valued point of $\mathcal{G}$. Conversely, if 
$$
\alpha_V\colon \omega(V)\otimes_{\mathcal{O_S}}\mathcal{R}\to \omega(V)\otimes_{\mathcal{O_S}} \mathcal{R}
$$
is a natural tensor automorphism, then evaluating $\alpha$ on the Ind-object $\mathcal{O}_{\mathbb{G}}$ of $\Rep_E(\mathbb{G})$ defines a morphism of $\mathcal{O}_S$-algebras
$$
\omega(\mathcal{O}_{\mathbb{G}})\to \omega(\mathcal{O}_{\mathbb{G}})\otimes_{\mathcal{O}_S} \mathcal{R}\xrightarrow{\alpha}\omega(\mathcal{O}_{\mathbb{G}})\otimes_{\mathcal{O}_S} \mathcal{R}\to \mathcal{R}, 
$$
where the first morphism is induced by the unit of $\mathcal{R}$ and the last by the counit of $\mathcal{O}_{\mathbb{G}}$, hence an $\mathcal{R}$-valued point of $\underline{\Spec}(\omega(\mathcal{O}_{\mathbb{G}}))$. One checks that both maps are inverse homomorphisms of groups. Hence, the claim follows.
\end{proof}

Applying this lemma to 
$$
\omega_{\mathrm{can}}\colon \Rep_E(\mathbb{G})\to \mathrm{Bun}_{X_E}
$$
shows that 
$$
\Aut^\otimes(\omega_{\mathrm{can}})\cong\mathcal{E}(\mathbb{G})
$$ 
is isomorphic to the group scheme associated with the group scheme $\mathbb{G}$ over $\varphi-\mathrm{Mod}_L$.
A similar proof shows moreover, that 
$$
\mathbb{G}\cong \Aut^\otimes(\omega^\prime_{\mathrm{can}})
$$ 
where $\Aut^\otimes(\omega^\prime_{\mathrm{can}})$ is the functor sending an algebra object $R$ in $\mathrm{Ind}-(\varphi-\mathrm{Mod}_L)$ to the group of tensor automorphisms of $\omega^\prime_{\mathrm{can}}\otimes R$. In particular, we can conclude that the band of the gerbe of the Tannakian category $\Rep_E(\mathbb{G})$ is reductive if $\mathbb{G}$ is.

We can now give a concrete classification of the category $\mathcal{B}(\Rep_E(\mathbb{G}))$ (cf.\ \Cref{definition: b of t}) of a connected group scheme $\mathbb{G}$ over $\varphi-\mathrm{Mod}_L$ (cf.\ \cite[Lemma 9.1.4]{dat_orlik_rapoport_period_domains}).
We let $\mathcal{B}(\mathbb{G})$ be the following category:
Its objects are elements $b\in \mathbb{G}(L)$ (more precisely, $b\in G(L)$ if $G$ is the group scheme over $L$ underlying $\mathbb{G}$) and the set of morphisms
from $b^\prime$ to $b$ are the elements $c\in \mathbb{G}(L)$ such that 
$$
b=cb^\prime\varphi_{\mathbb{G}}(c)^{-1}
$$ 
where 
$$
\varphi_{\mathbb{G}}\colon \mathbb{G}(L)\to \mathbb{G}(L)
$$ 
is induced by the given isomorphism $\varphi_{\mathbb{G}}$ of $G$ lying over the Frobenius $\varphi\colon L\to L$ of $L$.

Given an element $b\in \mathbb{G}(L)$ we can define an exact tensor functor
$$
\omega^\prime_b\colon \Rep_E(\mathbb{G})\to \varphi-\mathrm{Mod}_L
$$
by sending a $\mathbb{G}$-representation $V$ to the isocrystal
$$
(V,V\xrightarrow{\varphi_V}V\xrightarrow{b}V)
$$
where the second morphism denotes the action of $b\in \mathbb{G}(L)$ on $V$.

\begin{proposition}
\label{proposition: explicit description of b of g}
Let $\mathbb{G}$ be a connected affine group scheme over $\varphi-\mathrm{Mod}_L$. Sending $b\in \mathbb{G}(L)$ to $\omega_b^\prime$ defines an equivalence of categories
$$
\mathcal{B}(\mathbb{G})\cong \mathcal{B}(\Rep_E(\mathbb{G})).
$$
\end{proposition}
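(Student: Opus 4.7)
My plan is to apply the Tannakian dictionary (encoded in \Cref{lemma: automorphisms of fiber functor}) together with Steinberg's theorem, which provides triviality of $G$-torsors over $\mathrm{Spec}(L)$ for the connected (reductive) underlying group $G$ of $\mathbb{G}$ over $L$, exploiting that $L=\widehat{E^{\mathrm{un}}}$ has cohomological dimension one.

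First I would verify that $b\mapsto\omega_b'$ is functorial on $\mathcal{B}(\mathbb{G})$. Given a morphism $c\colon b'\to b$, that is, $c\in\mathbb{G}(L)$ with $b=cb'\varphi_{\mathbb{G}}(c)^{-1}$, letting $c$ act on each $V\in\Rep_E(\mathbb{G})$ through the representation structure yields an $L$-linear isomorphism $\omega_{b'}'(V)\to\omega_b'(V)$; the displayed relation is precisely what is needed to intertwine the twisted Frobenii $b'\varphi_V$ and $b\varphi_V$, and naturality and tensor compatibility are automatic from the representation structure.

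For essential surjectivity, I would start with an exact tensor functor $\omega'\colon\Rep_E(\mathbb{G})\to\varphi-\mathrm{Mod}_L$ and compose it with the forgetful functor $\varphi-\mathrm{Mod}_L\to\Bun_L$ to obtain a plain fiber functor $\omega''$ of $\Rep_E(\mathbb{G})$ over $\Spec(L)$. By \Cref{lemma: automorphisms of fiber functor} the group scheme $\Aut^\otimes(\omega''_{\mathrm{can}})$ is $G$, so $\omega''$ corresponds to a $G$-torsor on $\Spec(L)$. By Steinberg this torsor is trivial, giving an isomorphism $\omega''\cong\omega''_{\mathrm{can}}$. Under this identification the $\varphi$-semilinear Frobenius on $\omega'(V)$ differs from the canonical semilinear Frobenius $\varphi_V$ on the underlying $L$-space of $V$ by an $L$-linear automorphism $b_V$ of $V$; these $b_V$ are natural in $V$ and multiplicative under tensor products, so they assemble to a tensor automorphism of $\omega''_{\mathrm{can}}$, i.e.\ an element $b\in\mathbb{G}(L)$, and by construction $\omega'\cong\omega_b'$.

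For full faithfulness, a natural tensor isomorphism $\omega_{b'}'\to\omega_b'$ is in particular a tensor automorphism of the underlying plain fiber functor $\omega''_{\mathrm{can}}$, so by \Cref{lemma: automorphisms of fiber functor} it is given by some $c\in\mathbb{G}(L)$; compatibility with the twisted Frobenii amounts to the identity $b\varphi_V\circ c=c\circ b'\varphi_V$ on each $V$, which, after conjugating by $\varphi_V$, is exactly the relation $b=cb'\varphi_{\mathbb{G}}(c)^{-1}$ defining the morphisms of $\mathcal{B}(\mathbb{G})$. The main technical obstacle is invoking Steinberg's theorem in the correct generality; in the applications $\mathbb{G}$ is reductive so this is standard, while in the merely connected case one supplements it with the vanishing of $H^1(\Spec(L),-)$ for the unipotent radical.
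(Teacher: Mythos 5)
Your proposal follows essentially the same route as the paper: identify $\Aut^\otimes$ of the forgetful fiber functor with $G$ via \Cref{lemma: automorphisms of fiber functor}, invoke Steinberg's theorem to trivialize the underlying $G$-torsor over $\Spec(L)$, and then read off the element $b\in\mathbb{G}(L)$ by comparing the two $\varphi_L$-semilinear Frobenius structures, with the same Tannakian argument giving full faithfulness. Your closing caveat about the merely connected (as opposed to reductive) case is a reasonable refinement that the paper leaves implicit.
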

\begin{proof}
Let $\omega_{\mathrm{can}}^\prime\colon \Rep_E(\mathbb{G})\to \varphi-\mathrm{Mod}_L$
resp.\
$$
\mathrm{forg}\colon \varphi-\mathrm{Mod}_L\to \mathrm{Vec}_L
$$
be the canonical exact tensor functors.
If $G$ denotes the underlying connected group scheme of $\mathbb{G}$ over $L$ then by \Cref{lemma: automorphisms of fiber functor} 
$$
\Aut^\otimes(\mathrm{forg}\circ\omega^\prime_{\mathrm{can}})\cong G.
$$
Let 
$$
\omega^\prime\colon \Rep_E(\mathbb{G})\to \varphi-\mathrm{Mod}_L
$$ 
be an exact tensor functor. By Steinberg's theorem the functors $\mathrm{forg}\circ \omega^\prime$ and $\mathrm{forg}\circ \omega^\prime_{\mathrm{can}}$ are isomorphic. Let 
$$
\alpha\colon \mathrm{forg}\circ \omega^\prime\to \mathrm{forg}\circ \omega^\prime_{\mathrm{can}}
$$ 
be an $L$-linear tensor isomorphism.  Moreover, both functors come equipped with canonical $\varphi_L$-semilinear tensor automorphisms $\sigma^\prime,\sigma^\prime_{\mathrm{can}}$. Then 
$$
\beta:=\alpha\circ\sigma^\prime\circ \alpha^{-1}\circ \sigma^\prime_{\mathrm{can}}
$$ 
is an $L$-linear tensor automorphism of $\mathrm{forg}\circ \omega^\prime_{\mathrm{can}}$. Therefore, by the Tannakian formalism, there exists an element $b\in \mathbb{G}(L)$ such that $\beta$ is given by the multiplication by $b$. But then $\omega^\prime$ is isomorphic to $\omega^\prime_b$.
A similar reasoning shows that the functor $\mathcal{B}(\mathbb{G})\to \mathcal{B}(\Rep_E(\mathbb{G}))$ is fully faithful.   
\end{proof}

Let $G/E$ be a connected affine group scheme and let $\mathbb{G}$ be the connected affine group scheme over $\varphi-\mathrm{Mod}_L$. Using \cite[Lemma 9.1.4]{dat_orlik_rapoport_period_domains} (i.e., the same reasoning as in the proof above) we can conclude that there is a canonical chain of equivalences
$$
\mathcal{B}(\Rep_E(G))\cong \mathcal{B}(G)=\mathcal{B}(\mathbb{G})\cong \mathcal{B}(\Rep_E(\mathbb{G}))
$$
where $\mathcal{B}(G)$ is defined exactly as $\mathcal{B}(\mathbb{G})$. 

We will need the following lemma. If $\mathcal{T}$ is a Tannakian category over a field $k$ and $k^\prime/k$ a finite extension we denote by $\mathcal{T}_{k^\prime}$ the base change of $\mathcal{T}$ to $k^\prime$ (cf.\ \cite[Construction 2.12]{ziegler_graded_and_filtered_fiber_functors}).

\begin{lemma}
\label{lemma: base change of tannakian category}
  Let $E^\prime/E$ be a composition of a finite totally ramified and a finite purely inseparable extension and let $\mathbb{G}$ be an affine group scheme over $\varphi-\mathrm{Mod}_L$. Then the base change $\Rep_{E}(\mathbb{G})_{E^\prime}$ of $\Rep_{E}(\mathbb{G})$ to $E^\prime$ is equivalent to the category $\Rep_{E^\prime}(\mathbb{G}_{E^\prime})$ where $\mathbb{G}_{E^\prime}$ is the base change of $\mathbb{G}$ to $E^\prime$ (i.e., the Hopf algebra of $\mathbb{G}_{E^\prime}$ is given by the Hopf algebra $\mathcal{O}_{\mathbb{G}}\otimes_L L^\prime$ in $\varphi-\mathrm{Mod}_{L^\prime}$ where $L^\prime\cong E^\prime\otimes_E L$ is the completion of the maximal unramified extension of $E^\prime$).   
\end{lemma}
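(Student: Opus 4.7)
The plan is to reduce the lemma to the key algebraic identification $L \otimes_E E' \cong L'$ together with its compatibility with the Frobenius, after which everything becomes formal base change of comodule categories.

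The first step is to verify $L \otimes_E E' \cong L'$ as $\varphi$-rings, i.e., that under this isomorphism $\varphi_L \otimes \mathrm{Id}_{E'}$ corresponds to $\varphi_{L'}$. The hypothesis on $E'/E$ ensures that the residue field extension of $E'/E$ is trivial: for the totally ramified factor this is immediate, and for the purely inseparable factor (which only occurs in equal characteristic) this uses that $\F_q$ is perfect. Consequently $(E')^{\mathrm{un}} = E^{\mathrm{un}} \cdot E'$ inside a chosen algebraic closure, the Frobenius acts as the identity on both the totally ramified and the purely inseparable parts, and $E'$ and $L$ are Frobenius-independent factors of $L'$; passing to completions gives $L \otimes_E E' \cong L'$ with the required Frobenius-compatibility. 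This step also fails for arbitrary finite extensions: if the residue extension is nontrivial, $L \otimes_E E'$ splits into a product of fields, which is exactly why the hypothesis is imposed.

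The second step is to unwind Ziegler's Construction 2.12 in our setting. An object of $\Rep_E(\mathbb{G})_{E'}$ is a pair $(V, \rho)$ where $V \in \Rep_E(\mathbb{G})$ and $\rho \colon E' \to \mathrm{End}_{\Rep_E(\mathbb{G})}(V)$ is an $E$-algebra homomorphism. Since endomorphisms in $\Rep_E(\mathbb{G})$ commute with the $L$-action, with the Frobenius $\varphi_V$, and with the coaction of $\mathcal{O}_{\mathbb{G}}$, the combined $L$- and $E'$-actions give $V$ the structure of a finite-dimensional module over $L \otimes_E E' = L'$; the Frobenius becomes $\varphi_{L'}$-semilinear by the compatibility in step one; and the coaction extends to an $L'$-linear coaction by $\mathcal{O}_{\mathbb{G}} \otimes_L L' = \mathcal{O}_{\mathbb{G}_{E'}}$. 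This defines a functor $F\colon \Rep_E(\mathbb{G})_{E'} \to \Rep_{E'}(\mathbb{G}_{E'})$.

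For the quasi-inverse, an object $W \in \Rep_{E'}(\mathbb{G}_{E'})$ is restricted along $L \hookrightarrow L'$ to a finite-dimensional $L$-module (finite since $[L':L] = [E':E]$ by step one), with the restricted $\varphi_L$-semilinear Frobenius, and the coaction of $\mathcal{O}_{\mathbb{G}} \otimes_L L'$ is restricted along $\mathcal{O}_{\mathbb{G}} \to \mathcal{O}_{\mathbb{G}} \otimes_L L'$ to an $\mathcal{O}_{\mathbb{G}}$-coaction; the residual embedding $E' \hookrightarrow L'$ provides the endomorphisms of $W$ in $\Rep_E(\mathbb{G})$ defining $\rho$. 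Verifying that these two functors are mutually quasi-inverse is routine once one has tracked the structures correctly. The main obstacle is the bookkeeping of linearity and semilinearity relations, all of which ultimately reduce to the single identification in step one.
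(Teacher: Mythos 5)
Your proposal is correct and takes essentially the same approach as the paper: identify $L' \cong E' \otimes_E L$ as $\varphi$-rings and then unwind Ziegler's Construction 2.12, exchanging the auxiliary $E'$-action on an object of $\Rep_E(\mathbb{G})$ for the $L'$-linear structure of an object of $\Rep_{E'}(\mathbb{G}_{E'})$. The only difference is that you spell out why the hypothesis on $E'/E$ forces $L \otimes_E E'$ to be a field with the right Frobenius, which the paper simply records in the parenthetical $L' \cong E' \otimes_E L$ without comment.
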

\begin{proof}
An object of $\Rep_E(\mathbb{G})_{E^\prime}$ is, by definition, given by a triple
$$
(V\in \varphi-\mathrm{Mod}_L, \alpha\colon E^\prime\mapsto \mathrm{End}_{\mathbb{G}}(V), c\colon V\to V\otimes_L \mathcal{O}_{\mathbb{G}})
$$
where $V$ is an isocrystal over $L$, $\alpha$ an $E$-algebra homomorphism and $c$ a coaction (in $\varphi-\mathrm{Mod}_L$) of $\mathcal{O}_{\mathbb{G}}$ on $V$. Moreover, $c$ is $E^\prime$-linear as $\alpha$ maps into the $\mathbb{G}$-endomorphisms of $V$.
We can map this triple to the $\mathbb{G}_{E^\prime}$-representation $(V,\tilde{c})$ where $V\in \varphi-\mathrm{Mod}_{L^\prime}$ is considered as an isocrystal over $L^\prime\cong E^\prime\otimes_E L$ with $E^\prime$ acting via $\alpha$ on $V$ and $\varphi_V\colon V\to V$ being the given $\varphi_L$-semilinear automorphism (which is also $\varphi_{L^\prime}$-linear). Here we have written $\varphi_L$ resp.\ $\varphi_{L^\prime}=\mathrm{Id}_{E^\prime}\otimes \varphi_L$ for the Frobenius of $L$ resp.\ $L^\prime$.
Finally, $\tilde{c}$ is defined as the composition
$$
V\xrightarrow{c} V\otimes_L\mathcal{O}_{\mathbb{G}}\cong V\otimes_{L^\prime}(L^\prime\otimes_L \mathcal{O}_{\mathbb{G}})
$$
which is a morphism of isocrystals over $L^\prime$.
Conversely, a $\mathbb{G}_{E^\prime}$-representation 
$$
(W\in \varphi-\mathrm{Mod}_{L^\prime},\tilde{c}\colon W\to W\otimes_{L^\prime}(L^\prime\otimes_L \mathcal{O}_{\mathbb{G}})
$$
can be sent to the triple
$$
(W, \alpha\colon E^\prime\to \mathrm{End}_{\mathbb{G}}(W),c\colon W\xrightarrow{\tilde{c}} W\otimes_L \mathcal{O}_{\mathbb{G}})
$$
where $W$ is considered as an isocrystal over $L$, $\alpha$ is the given action of $E^\prime$ on $W$ and $c$ the given coaction (using $W\otimes_{L^\prime}(L^\prime\otimes_L \mathcal{O}_{\mathbb{G}})\cong W\otimes_L \mathcal{O}_{\mathbb{G}}$).
These two functors define inverse equivalences of categories.  
\end{proof}

The same proof shows that for the Tannakian category $\Rep_k(G)$ of representations of an affine group scheme $G$ over a field $k$ and a finite extension $k^\prime$ of $k$ the base change $\Rep_k(G)_{k^\prime}$ is equivalent to the Tannakian category $\Rep_{k^\prime}(G_{k^\prime})$. 

\begin{theorem}
\label{theorem: classification of torsors general case}
Let $\mathcal{G}$ be a reductive group scheme over $X_E$ and let $\mathbb{G}$ be an affine group scheme over $\varphi-\mathrm{Mod}_L$ such that $\mathcal{G}$ is associated to $\mathbb{G}$, i.e., 
$$
\mathcal{G}\cong \mathcal{E}(\mathbb{G}).
$$ 
Then the canonical morphism
$$
B(\Rep_E(\mathbb{G}))\to H^1_{\acute{e}t}(X_E,\mathcal{G})
$$
is bijective.
\end{theorem}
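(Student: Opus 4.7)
The plan is to interpret both sides of the claimed bijection as isomorphism classes of fiber functors $\omega\colon \Rep_E(\mathbb{G})\to \Bun_{X_E}$ and then to show that every such fiber functor is automatically admissible, by upgrading the proof of \Cref{theorem: fargues theorem}.

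By \Cref{theorem: classification of admissible fiber functors}, the set $B(\Rep_E(\mathbb{G}))$ is identified with iso classes of admissible fiber functors $\Rep_E(\mathbb{G})\to \Bun_{X_E}$. On the other hand, \Cref{lemma: automorphisms of fiber functor} gives $\Aut^\otimes(\omega_{\mathrm{can}})\cong \mathcal{G}$; since any two fiber functors of a Tannakian category on a common scheme are fpqc-locally isomorphic, the Tannakian formalism identifies iso classes of \emph{all} fiber functors $\Rep_E(\mathbb{G})\to \Bun_{X_E}$ with $H^1_{\mathrm{fpqc}}(X_E,\mathcal{G})=H^1_{\acute{e}t}(X_E,\mathcal{G})$ (the last equality by smoothness of $\mathcal{G}$). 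Under these identifications the map in the theorem is the inclusion ``admissible $\hookrightarrow$ all'', so bijectivity reduces to the statement that every fiber functor $\omega\colon \Rep_E(\mathbb{G})\to \Bun_{X_E}$ is admissible.

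Imitating the proof of \Cref{theorem: fargues theorem}, fix a short exact sequence $0\to V\to V^\prime\to V^{\prime\prime}\to 0$ in $\Rep_E(\mathbb{G})$. The same formal reductions apply, since $\Rep_E(\mathbb{G})$ is Tannakian and both $\omega$ and $\mathrm{gr}\circ\mathrm{HN}$ preserve duals, exterior powers, and modules of symmetric tensors: rank-counting reduces exactness after $\mathrm{gr}\circ\mathrm{HN}\circ\omega$ to injectivity of the first map, and taking $\Lambda^{\dim V}$ together with tensoring with $V^\vee$ reduces further to the case $V\cong \mathbf{1}$, the unit object of $\Rep_E(\mathbb{G})$. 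Then $\omega(\mathbf{1})=\mathcal{O}_{X_E}$, and it remains to show that for any monomorphism $\mathbf{1}\hookrightarrow V^\prime$ the induced morphism $\mathcal{O}_{X_E}\to \mathrm{gr}\circ\mathrm{HN}\circ\omega(V^\prime)$ is non-zero. As in loc.\ cit., this follows once one produces, for some $r\geq 1$, a splitting in $\Rep_E(\mathbb{G})$ of the inclusion $\mathbf{1}=\mathrm{TS}_r(\mathbf{1})\hookrightarrow \mathrm{TS}_r(V^\prime)$.

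The crux is therefore an isocrystal-level upgrade of Haboush's theorem. Let $G=\mathbb{G}_L$ denote the underlying reductive group over $L$. Classical Haboush applied to $G$ yields, for some $r\geq 1$, an $L$-linear $G$-invariant form $\tilde f\colon \mathrm{TS}_r(V^\prime_L)\to L$ non-zero on $\mathrm{TS}_r(\mathbf{1}_L)=L$; equivalently, the restriction morphism
$$
M:=\bigl((\mathrm{TS}_r V^\prime_L)^\vee\bigr)^G \longrightarrow L
$$
is surjective as a map of $L$-vector spaces. Because $\mathbf{1}\hookrightarrow V^\prime$ lives in $\Rep_E(\mathbb{G})$, the subspace $M$ is $\varphi_L$-stable, hence a sub-isocrystal of $(\mathrm{TS}_r V^\prime_L)^\vee$, and $M\to L$ is a morphism of isocrystals. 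Since $\Hom$'s between isocrystals of distinct slopes vanish, this morphism factors through the slope-$0$ summand $M_0\subseteq M$; the equivalence between slope-$0$ isocrystals over $L$ and finite-dimensional $E$-vector spaces via $N\mapsto N^{\varphi_L}$ then promotes surjectivity on $L$-points to surjectivity on $E$-points, producing a $\varphi_L$-fixed lift of $1\in L$. This lift is an element of $M^{\varphi_L}=\Hom_{\Rep_E(\mathbb{G})}(\mathrm{TS}_r V^\prime,\mathbf{1})$ providing the desired splitting. This Haboush upgrade is the main obstacle; once it is in hand, the rest of the argument of \Cref{theorem: fargues theorem} transfers verbatim.
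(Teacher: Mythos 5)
Your proof is correct, and it takes a genuinely different route from the paper's. The paper reduces to the constant case: using \Cref{theorem: classification of reductive group schemes} it presents $\mathcal{G}$ as an inner form of a quasi-split group $G^\prime/E$; in the \emph{pure} inner form case it produces an equivalence $\Rep_E(\mathbb{G})\cong\Rep_E(\mathbb{G}^\prime)$ by twisting with the element $b\in G^\prime(L)$ that gives rise to the adjoint torsor (so that admissibility of every fiber functor follows from \Cref{theorem: fargues theorem}); and for a general inner form it passes to a finite extension $E^\prime/E$ where $\mathcal{G}$ becomes a pure inner form (\Cref{lemma: finite extension yields pure inner form}), using the base change lemmas \Cref{lemma: admissible fiber functors and finite base change} and \Cref{lemma: base change of tannakian category} to transport admissibility back to $E$. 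Your approach instead re-runs the argument of \Cref{theorem: fargues theorem} directly inside $\Rep_E(\mathbb{G})$, replacing Haboush over $E$ by an isocrystal-level version over $L$: the invariants $M=((\mathrm{TS}_r V^\prime_L)^\vee)^G$ form a sub-isocrystal of $(\mathrm{TS}_r V^\prime)^\vee$ (being the equalizer, in $\varphi\textrm{-}\mathrm{Mod}_L$, of the coaction and the unit), the restriction $M\to L$ is a morphism of isocrystals, surjective onto the slope-$0$ target $\mathbf{1}=(L,\varphi_L)$, hence factors through the slope-$0$ part $M_0$ and, under the equivalence of slope-$0$ isocrystals with $E$-vector spaces, admits a $\varphi$-fixed preimage of $1$, which is precisely a splitting in $\Rep_E(\mathbb{G})$. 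This avoids the inner-form reduction and the finite base change entirely and makes the proof self-contained modulo \Cref{theorem: classification of admissible fiber functors} and \Cref{lemma: automorphisms of fiber functor}; the paper's route has the advantage of reusing the constant-case theorem verbatim and of only invoking Haboush over the ground field $E$, but yours is arguably more direct and shows that \Cref{theorem: classification of torsors general case} is really just \Cref{theorem: fargues theorem} applied to the larger Tannakian category $\Rep_E(\mathbb{G})$.
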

\begin{proof}
Let $G^\prime$ be a quasi-split group over $E$ such that $\mathcal{G}$ is an inner form of $G^\prime$ (cf.\ \Cref{theorem: classification of reductive group schemes}).
We first assume that $\mathcal{G}$ is a pure inner form of $G^\prime$. Then there exists a $G^\prime$-torsor $\mathcal{Q}$ over $X_E$ such that 
$$
\mathcal{G}\cong \Aut_{G^\prime}(\mathcal{Q}).
$$
By \Cref{theorem: fargues theorem}, \Cref{proposition: explicit description of b of g} and \Cref{lemma: automorphisms of fiber functor} we can see that there exists an element $b\in G^\prime(L)$ such that 
$$
\mathcal{O}_{\mathbb{G}}\cong\mathcal{O}_{G^\prime}\otimes_EL
$$ 
with $\varphi_{\mathbb{G}}$ given by the composition
$$
\mathcal{O}_{G^\prime}\otimes_EL\xrightarrow{\mathrm{Id}\otimes\varphi_L}\mathcal{O}_{G^\prime}\otimes_EL\xrightarrow{\mathrm{Ad}(b)}\mathcal{O}_{G^\prime}\otimes_EL
$$
where 
$$
\varphi_L\colon L\to L
$$ 
denotes the Frobenius on $L$ and $\mathrm{Ad}(b)$ be adjoint action of $b$ on $\mathcal{O}_{G^\prime}\otimes_E L$.
Let $\mathbb{G}^\prime$ be the group scheme over $\varphi-\mathrm{Mod}_L$ associated with $G^\prime$.
One can deduce that 
$$
\Rep_E(\mathbb{G})\cong \Rep_E(\mathbb{G}^\prime)
$$ 
by mapping a representation $V$ of $\mathbb{G}^\prime$ to the $\mathbb{G}^\prime$-representation $bV$ (cf.\ \cite[Example 9.1.22]{dat_orlik_rapoport_period_domains}).
In particular, we can conclude that every fiber functor 
$$
\omega\colon \Rep_E(\mathbb{G})\to \mathrm{Bun}_{X_E}
$$ 
for $\Rep_E(\mathbb{G})$ is admissible because this holds true for $\Rep_E(\mathbb{G}^\prime)$ by \Cref{theorem: fargues theorem} (and the fact these fiber functors identify with fiber functors for $\Rep_E(G^\prime)$, cf.\ \Cref{proposition: explicit description of b of g}).
By \Cref{theorem: classification of admissible fiber functors} we can conclude in this case.

Now assume that $\mathcal{G}$ is an arbitrary inner form of $G^\prime$. 
Let $E^\prime$ be a composition of a finite totally ramified extension of $E$ and a finite purely inseparable extension of $E$ such that $\mathcal{G}_{X_{E^\prime}}$ is a pure inner form of $G^\prime_{E^\prime}$ (the existence of such a field extension is guaranteed by \Cref{lemma: finite extension yields pure inner form} and the discussion following it).
We already know (from \Cref{theorem: classification of admissible fiber functors}) that the map
$$
B(\Rep_E(\mathbb{G}))\to H^1_{\acute{e}t}(X_E,\mathcal{G})
$$
is injective and, again by \Cref{theorem: classification of admissible fiber functors}, it suffices to check that every fiber functor
$$
\omega\colon \Rep_E(\mathbb{G})\to \mathrm{Bun}_{X_E}
$$
is admissible.
Let $\omega\colon \Rep_E(\mathbb{G})\to \mathrm{Bun}_{X_E}$ be such a fiber functor. By \cite[Construction 2.12]{ziegler_graded_and_filtered_fiber_functors} the composition
$$
\Rep_E(\mathbb{G})\to \mathrm{Bun}_{X_E}\to \mathrm{Bun}_{X_{E^\prime}}
$$
factors over a fiber functor 
$$
\omega_{E^\prime}\colon \Rep_E(\mathbb{G})_{E^\prime}\to \mathrm{Bun}_{X_{E^\prime}},
$$
where $\Rep_E(\mathbb{G})_{E^\prime}$ denotes the base change of the category $\Rep_E(\mathbb{G})$ to $E^\prime$.
By \Cref{lemma: base change of tannakian category} the categories $\Rep_E(\mathbb{G})_{E^\prime}$ and $\Rep_{E^\prime}(\mathbb{G}_{E^\prime})$ are equivalent. As $\mathcal{G}_{X_{E^\prime}}$ is a pure inner form of $G^\prime_{E^\prime}$ (and associated with $\mathbb{G}_{E^\prime}$) the fiber functor $\omega_{E^\prime}$ is admissible by the case already proven. By \Cref{lemma: admissible fiber functors and finite base change} this implies that $\omega$ is admissible. In particular, this finishes the proof of the classification of $\mathcal{G}$-torsors.
\end{proof}

\section{Uniformization results}
\label{section: uniformization}

In this section we establish uniformization results for $\mathcal{G}$-torsors over the Fargues-Fontaine curve.

First, we want to prove that reductive group schemes over the Fargues-Fontaine curve become constant after removing a closed point.

We will need the following lemma.

\begin{lemma}
\label{lemma: reduction to tori in the quasi-split case}
Let $G/E$ be a quasi-split reductive group over $E$. Then every element in $B(G)$ admits a reduction to some torus $T\subseteq G$.  
\end{lemma}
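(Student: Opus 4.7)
The strategy is the standard two-step reduction: first use the Newton cocharacter of $b$ to reduce to a Levi subgroup where $[b]$ becomes basic, then lift a basic class to a torus class via Kottwitz's classification.

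Fix a Borel pair $T_0 \subseteq B_0 \subseteq G$ defined over $E$, which exists because $G$ is quasi-split, and set $\Gamma := \Gal(\bar E/E)$. For $b \in G(L)$ representing a class in $B(G)$, Kottwitz (cf.\ \cite{kottwitz_isocrystals_with_additional_structure_I}) associates a slope homomorphism $\nu_b \colon \mathbb{D}_L \to G_L$. After replacing $b$ by a $\varphi_L$-conjugate, I may assume that $\nu_b$ is defined over $E$, factors through $T_0$, and is $B_0$-dominant. Then $M := Z_G(\nu_b)$ is a standard Levi of $G$ containing $T_0$ and defined over $E$, and $b \in M(L)$ because $\nu_b$ commutes with $b$. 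By construction the Newton point of $b$ viewed inside $M$ is central, so $[b]$ is basic in $B(M)$.

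The next step is to lift a basic class in $B(M)$ to one in $B(T_0)$. For this I would invoke the Kottwitz isomorphism
$$
\kappa_M \colon B(M)_{\mathrm{basic}} \xrightarrow{\sim} \pi_1(M)_\Gamma
$$
together with $\kappa_{T_0} \colon B(T_0) \xrightarrow{\sim} X_\ast(T_0)_\Gamma$ and the fact that the natural projection $X_\ast(T_0) \twoheadrightarrow \pi_1(M)$ is surjective, because $T_0$ is a maximal torus of $M$. Passing to $\Gamma$-coinvariants preserves surjectivity, so the composite $B(T_0) \to B(M)_{\mathrm{basic}}$ is surjective and $[b]$ admits a representative in $T_0(L) \subseteq M(L) \subseteq G(L)$, which finishes the argument with $T := T_0$.

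The main obstacle is the first reduction: arranging, after $\varphi_L$-conjugation, that $\nu_b$ is defined over $E$ and $B_0$-dominant, so that its centralizer $M$ is a standard Levi defined over $E$. This is where the quasi-split hypothesis genuinely enters, since it supplies the $E$-rational Borel pair $(T_0, B_0)$ relative to which $\nu_b$ can be ``straightened''; the required statement is precisely the standard Newton decomposition of \cite{kottwitz_isocrystals_with_additional_structure_I}.
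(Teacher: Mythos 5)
Your first step is a correct algebraic alternative to the argument in the paper: there, the reduction of $b$ to a basic class in a Levi is obtained geometrically, via the Harder--Narasimhan filtration on the filtered fiber functor $\mathrm{HN}\circ\mathcal{E}(-)\circ\omega_b^\prime$ over $X_E$, the resulting parabolic $P(\omega)$, and a descent to a Levi using the vanishing of $H^1$ for positive-slope bundles. Your version uses Kottwitz's Newton theory directly: after a $\varphi_L$-conjugation one may assume $\nu_b\in X_*(T_0)_\Q^{+}$ is $\Gamma$-fixed, so that $M=Z_G(\nu_b)$ is a standard Levi over $E$, $b\in M(L)$ (since $\varphi_L(\nu_b)=\nu_b$ forces $\mathrm{Int}(b)$ to fix $\nu_b$), and $[b]$ is basic in $B(M)$. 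Both routes are legitimate and land at the same intermediate statement.

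The second step, however, contains a genuine gap. There is no map $B(T_0)\to B(M)_{\mathrm{basic}}$: the map $B(T_0)\to B(M)$ does not take values in the basic subset, and the surjection $X_*(T_0)_\Gamma\twoheadrightarrow\pi_1(M)_\Gamma$ only controls the Kottwitz point, not the Newton point, so it does not show that a basic class lifts to $B(T_0)$. In fact the assertion ``every basic class in $B(M)$ reduces to the fixed split maximal torus $T_0$'' is false. Take $M=G=\mathrm{PGL}_2$ over $\Q_p$ with $T_0$ the split diagonal torus. Then $B(\mathrm{PGL}_2)_{\mathrm{basic}}\cong\pi_1(\mathrm{PGL}_2)_\Gamma\cong\Z/2\Z$, and every basic class has trivial Newton point because $Z(\mathrm{PGL}_2)$ is trivial; on the other hand $B(T_0)\cong X_*(T_0)\cong\Z$ with Newton map the inclusion $\Z\hookrightarrow\Q$, so the only class in $B(T_0)$ whose image in $B(\mathrm{PGL}_2)$ is basic is the trivial one, whose image is the trivial class. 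Thus the non-trivial basic class of $B(\mathrm{PGL}_2)$ admits no reduction to $T_0$ --- it reduces instead to an anisotropic maximal torus. The lemma only asserts a reduction to \emph{some} torus $T\subseteq G$, and the passage from basic classes in a Levi to tori, which requires choosing an appropriate (typically elliptic) maximal torus and not just the fixed $T_0$, is precisely what the paper delegates to Kottwitz, \cite[Proposition 13.1]{kottwitz_bg_for_all_local_and_global_fields}. Your write-up should either cite that result as well or replace the coinvariance argument by one that selects the correct torus and verifies the Newton-point compatibility.
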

\begin{proof}
For $E/\mathbb{Q}_p$ this is \cite[Proposition 7.2.]{fargues_g_torseurs_en_theorie_de_hodge_p_adique}.
Using \cite[Proposition 13.1]{kottwitz_bg_for_all_local_and_global_fields} we can extend this argument once we establish that in the case $E=\F_q((t))$ every element in $B(G)$ admits a reduction to some basic element in $B(M)$ for some Levi subgroup $M\subseteq G$. Let $b\in B(G)$ and consider the filtered fiber functor
$$
\omega\colon \Rep_E(G)\xrightarrow{b} \varphi-\mathrm{Mod}_L\to \Bun_{X_E}\xrightarrow{HN} \mathrm{Fil}^\Q\Bun_{X_E}.
$$
It defines a parabolic subgroup scheme 
$$
P(\omega)\subseteq G\times_EX_E.
$$ 
If $B\subseteq G$ is a Borel subgroup, then there exists a unique standard parabolic 
$$
B\subseteq P\subseteq G
$$ 
such that for every $x\in X_E$ the groups $P(\omega)_{\bar{x}}$ and $P$ are conjugated. Using \cite[Expos\'e XXVI, Proposition 1.3.]{sgaIII_schemas_en_groupes_expose_23_a_26} we can conclude that there exists a $P$-torsor $\mathcal{Q}$ over $X_E$ such that 
$$
P(\omega)\cong P\times^{P}\mathcal{Q}
$$ 
with $P$ acting on $P$ via conjugation (in \cite[Section 5.1.]{fargues_g_torseurs_en_theorie_de_hodge_p_adique} this $P$-torsor would be called the canonical reduction of the torsor associated with $P$). As in \cite[Proposition 5.16.]{fargues_g_torseurs_en_theorie_de_hodge_p_adique} this $P$-torsor admits a reduction to a Levi subgroup $M\subseteq P$ because for $\lambda\geq 0$ the vector bundles 
$$
\mathrm{gr}^\lambda(U(\omega)) 
$$
(cf.\ \Cref{definition: group sheaves associated to filtered fiber functors})
are semistable of slope $\lambda\geq 0$.
By construction this $M$-torsor is semistable which implies that $b$ reduces to a basic element in $B(M)$ (cf.\ \cite[Proposition 5.12.]{fargues_g_torseurs_en_theorie_de_hodge_p_adique}).   
\end{proof}

As in \cite{fargues_g_torseurs_en_theorie_de_hodge_p_adique} this implies the following theorem.

\begin{theorem}
\label{theorem: uniformization constant quasi-split case}
Let $G/E$ be a quasi-split reductive group over $E$ and let $x\in X_E$ be a closed point. Then every $G$-torsor $\mathcal{Q}$ over $X_E$ is trivial over $X_E\setminus \{x\}$.
\end{theorem}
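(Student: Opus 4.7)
The plan is to imitate Fargues's strategy in the $p$-adic case, in three steps. First, I combine \Cref{theorem: fargues theorem} with \Cref{lemma: reduction to tori in the quasi-split case}: the class $[\mathcal{Q}]\in H^1_{\acute{e}t}(X_E,G)\cong B(G)$ admits a reduction to some class in $B(T)$ for a maximal torus $T\subseteq G$. Concretely, $\mathcal{Q}\cong \mathcal{Q}_T\times^T G$ for a $T$-torsor $\mathcal{Q}_T$ on $X_E$, so it suffices to prove that every $T$-torsor on $X_E$ is trivial on $X_E\setminus\{x\}$ for every torus $T/E$.

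Second, I reduce from a general torus to an induced one. Using the standard $\Gamma=\Gal(\bar{E}/E)$-lattice construction, choose a surjection of $E$-tori $T_1\to T$ with $T_1=\prod_i \mathrm{Res}_{E_i/E}\mathbb{G}_m$ a product of Weil restrictions (surjective because every $\Gamma$-module is a quotient of a permutation $\Gamma$-module). By functoriality of $B(-)\cong X_\ast(-)_\Gamma$ (cf.\ \Cref{proposition: explicit description of b of g}) and right-exactness of coinvariants, together with \Cref{theorem: fargues theorem}, the induced map
$$
H^1_{\acute{e}t}(X_E,T_1)\to H^1_{\acute{e}t}(X_E,T)
$$
is surjective. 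Therefore $\mathcal{Q}_T$ is the image of some $T_1$-torsor $\mathcal{Q}_{T_1}$, and it is enough to trivialise $\mathcal{Q}_{T_1}$ on $X_E\setminus\{x\}$.

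Third, I unwind the induced torus. Using the identification $X_{E_i}\cong X_E\otimes_E E_i$ from \Cref{lemma: admissible fiber functors and finite base change} together with the compatibility of Weil restriction with \'etale cohomology, a $T_1$-torsor on $X_E$ is the same data as a tuple $(\mathcal{L}_i)_i$ of line bundles on the curves $X_{E_i}$. By \Cref{theorem: classification of vector bundles}, $\mathrm{Pic}(X_{E_i})\cong\Z$ via the degree map, and every closed point $y$ of $X_{E_i}$ has degree one, so $\mathcal{O}(y)\cong\mathcal{O}(1)$ and removing any non-empty finite set of closed points kills the Picard group. Applying this to the non-empty finite set $\pi_i^{-1}(x)$ (with $\pi_i\colon X_{E_i}\to X_E$ the canonical morphism) shows that each $\mathcal{L}_i$ becomes trivial on $\pi_i^{-1}(X_E\setminus\{x\})$, hence $\mathcal{Q}_{T_1}$ and therefore $\mathcal{Q}$ are trivial on $X_E\setminus\{x\}$.

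The main technical point I expect to pin down is the degree-one statement for closed points of the Fargues-Fontaine curves $X_{E_i}$ in the equal-characteristic case $E=\F_q((t))$; in the $p$-adic setting this is classical (closed points correspond to characteristic-zero untilts of the perfectoid field), and the same recipe goes through using \Cref{theorem: classification of vector bundles}. A secondary subtlety is only conceptual: one must note that the lift of $\mathcal{Q}_T$ to a $T_1$-torsor is guaranteed merely at the level of isomorphism classes, but this is all that is needed since being trivial on $X_E\setminus\{x\}$ only depends on the isomorphism class.
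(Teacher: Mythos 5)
Your argument is correct, but it takes a genuinely different route from the one the paper uses. The paper's proof invokes \Cref{lemma: reduction to tori in the quasi-split case} together with the vanishing $H^2_{\acute{e}t}(X_E,T)=0$ from \Cref{theorem: second cohomology for tori}, and then refers to Fargues's proof of Th\'eor\`eme 7.1. You also begin by reducing to a torus via the same lemma, but you then avoid any appeal to $H^2$-vanishing: instead you resolve $T$ by an induced torus $T_1=\prod_i\mathrm{Res}_{E_i/E}\mathbb{G}_m$ and run a direct Picard-group computation on the finite \'etale covers $X_{E_i}$, using that closed points of a Fargues-Fontaine curve have degree one and $\mathrm{Pic}(X_{E_i})\cong\Z$. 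This makes the argument for this particular theorem more self-contained, in effect re-deriving a special case of the Shapiro/Weil-restriction mechanism that the paper deploys once inside the proof of \Cref{theorem: second cohomology for tori}. Both routes are fine; yours is arguably cleaner if one's goal is only this theorem, while the paper's is more economical given that the $H^2$-vanishing for tori is needed elsewhere anyway.

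One small point worth tightening. You justify surjectivity of $T_1\to T$ by "every $\Gamma$-module is a quotient of a permutation $\Gamma$-module", and then immediately need $X_\ast(T_1)\to X_\ast(T)$ to be surjective in order to apply right-exactness of $\Gamma$-coinvariants. These match cleanly only if the permutation-quotient fact is applied directly to the cocharacter lattice: choose a $\Gamma$-equivariant surjection $P=\bigoplus_i\Z[\Gamma/\Gamma_i]\twoheadrightarrow X_\ast(T)$, let $T_1$ be the induced torus with $X_\ast(T_1)=P$, and note that the resulting map of $E$-tori $T_1\to T$ is then automatically surjective (cocharacters generate). If one instead embeds $X^\ast(T)$ into a permutation lattice --- which is what the phrase "$T_1\to T$ surjective" suggests dually --- the induced map on cocharacter lattices need not be surjective on the nose (a finite cokernel can appear, as already for $\mathbb{G}_m\xrightarrow{z\mapsto z^2}\mathbb{G}_m$), and one would have to arrange the quotient to be torsion-free. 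Phrasing the step via a surjection onto $X_\ast(T)$ makes it airtight and is what the rest of your argument actually uses.
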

\begin{proof}
Using \Cref{lemma: reduction to tori in the quasi-split case} and \Cref{theorem: second cohomology for tori} the same proof as in \cite[Th\'eor\`eme 7.1.]{fargues_g_torseurs_en_theorie_de_hodge_p_adique} works.  
\end{proof}

\begin{lemma}
\label{lemma: reductive group schemes over punctured curve}
Let $\mathcal{G}$ be a reductive group scheme over $X_E$ and let $x\in X_E$ be a closed point. Then $\mathcal{G}_{|X_E-\{x\}}$ is isomorphic to a constant quasi-split reductive group.
\end{lemma}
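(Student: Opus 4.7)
The plan is to combine two previously established results: the classification theorem showing every reductive group scheme over $X_E$ is an inner form of a constant quasi-split group (Theorem \ref{theorem: classification of reductive group schemes}), and the uniformization theorem for constant quasi-split groups (Theorem \ref{theorem: uniformization constant quasi-split case}).

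First, by Theorem \ref{theorem: classification of reductive group schemes}, there exists a quasi-split reductive group $G'$ over $E$ such that $\mathcal{G}$ is an inner form of the constant group scheme $G' \times_E X_E$. In particular, there exists a $(G')^{\mathrm{ad}}$-torsor $\mathcal{Q}$ on $X_E$ such that
$$
\mathcal{G} \cong \mathcal{Q} \times^{(G')^{\mathrm{ad}}} G',
$$
where $(G')^{\mathrm{ad}}$ acts on $G'$ via the adjoint action. Note that the adjoint group $(G')^{\mathrm{ad}}$ of a quasi-split group is itself quasi-split over $E$ (a Borel subgroup of $G'$ defined over $E$ maps to a Borel of $(G')^{\mathrm{ad}}$ defined over $E$).

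Next, I apply Theorem \ref{theorem: uniformization constant quasi-split case} to the constant quasi-split group $(G')^{\mathrm{ad}}$ and the closed point $x \in X_E$: every $(G')^{\mathrm{ad}}$-torsor on $X_E$ is trivial over $X_E \setminus \{x\}$. In particular, the restriction $\mathcal{Q}_{|X_E \setminus \{x\}}$ is trivial, so that
$$
\mathcal{G}_{|X_E \setminus \{x\}} \cong G'_{|X_E \setminus \{x\}},
$$
which is a constant quasi-split reductive group as desired.

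There is essentially no obstacle here beyond invoking the previous theorems in the right order; the lemma is a direct corollary of the classification of reductive group schemes together with the uniformization theorem in the constant quasi-split case. The only small point to verify is that $(G')^{\mathrm{ad}}$ remains quasi-split, which is immediate.
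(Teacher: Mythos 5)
Your proof is correct and follows essentially the same route as the paper: invoke the classification theorem to realize $\mathcal{G}$ as a twist of a constant quasi-split $G'$ by a $(G')^{\mathrm{ad}}$-torsor $\mathcal{Q}$, note that $(G')^{\mathrm{ad}}$ is again quasi-split, and apply the uniformization theorem for constant quasi-split groups to trivialize $\mathcal{Q}$ over $X_E\setminus\{x\}$.
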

\begin{proof}
By \Cref{theorem: classification of reductive group schemes} there exists a quasi-split group $G^\prime$ over $E$ such that $\mathcal{G}$ is an inner form of $G^\prime$. Let $\mathcal{Q}$ be a $(G^\prime)^\ad$-torsor such that 
$$
\mathcal{G}\cong \mathcal{Q}\times^{(G^\prime)^\ad}G^\prime.
$$
Then $(G^\prime)^\ad$ is again quasi-split as the image of a Borel subgroup $B\subseteq G^\prime$ is a Borel subgroup of $(G^\prime)^\ad$. By \Cref{theorem: uniformization constant quasi-split case} every $(G^\prime)^\ad$-torseur over $X_E$ is trivial over the punctured curve $X_E-\{x\}$. In particular, $\mathcal{Q}_{X_E-\{x\}}$ is trivial which shows that 
$$
\mathcal{G}_{|X_E-\{x\}}\cong G^\prime\times_{\Spec(E)}(X_E-\{x\})
$$ 
is isomorphic to a constant quasi-split reductive group.  
\end{proof}

In fact, reductive group schemes over $X_E$ are not too far from being constant.
Let $\mathcal{G}$ be a reductive group scheme over $X_E$ and write (cf.\ \Cref{theorem: classification of reductive group schemes})
$$
\mathcal{G}\cong\mathcal{Q}\times^{G^\ad}G
$$
for a quasi-split reductive group $G/E$ over $E$ and a $G^\ad$-torsor $\mathcal{Q}$ over $X_E$. Let $b\in G^\ad(L)$ be an element giving rise to $\mathcal{Q}$ (cf.\ \Cref{proposition: explicit description of b of g}).
Assume first that $b$ is basic (cf.\ \cite[5.1.]{kottwitz_isocrystals_with_additional_structure_I}). We claim that in this case $\mathcal{G}$ is already constant. Namely, the group $G^\ad$ is semisimple and it suffices to prove the following lemma showing that $\mathcal{Q}$ is isomorphic to the pullback of a $G^\ad$-torsor over $E$ in this case. We denote by $B(H)_{\mathrm{basic}}$ the set of basic elements in $B(H)$ if $H$ is a reductive group over $E$ (cf.\ \cite[5.1.]{kottwitz_isocrystals_with_additional_structure_I}).

\begin{lemma}
\label{lemma: basic elements for semisimple groups}
Let $H/E$ be a semisimple group. Then the canonical map
$$
H^1(\Gal(\bar{E}/E),H)\to B(H)_{\mathrm{basic}}
$$
(cf.\ \cite[1.8.]{kottwitz_isocrystals_with_additional_structure_I}) is bijective.
\end{lemma}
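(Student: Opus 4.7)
By \Cref{theorem: fargues theorem} we identify $B(H)\cong H^1_{\acute{e}t}(X_E,H)$, and under this identification the canonical map $H^1(\Gal(\bar{E}/E),H)\to B(H)$ becomes the pullback along $f\colon X_E\to \Spec(E)$. The strategy is to identify both sides with isomorphism classes of exact tensor functors $\Rep_E(H)\to \mathcal{C}$, where $\mathcal{C}\subseteq \Bun_{X_E}$ is the full Tannakian subcategory of semistable vector bundles of slope $0$.

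First I would check that the image of $f^{\ast}$ lies in $B(H)_{\mathrm{basic}}$. For an $H$-torsor $Q$ over $E$ and any $V\in \Rep_E(H)$ the associated vector bundle is $(Q\times^{H}V)\otimes_E \mathcal{O}_{X_E}$, which is trivial, in particular semistable of slope $0$. On the other hand, since $H$ is semisimple, $Z(H)$ is finite, so $X_\ast(Z(H))_\Q=0$, and hence an element $b\in B(H)$ is basic exactly when its Newton cocharacter vanishes; via \Cref{theorem: classification of admissible fiber functors} and the isomorphism $\mathrm{gr}\circ\mathrm{HN}\circ\mathcal{E}(-)\cong \mathrm{Id}_{\varphi-\mathrm{Mod}_L}$, this is equivalent to the associated tensor functor $\omega_b\colon \Rep_E(H)\to \Bun_{X_E}$ factoring through $\mathcal{C}$.

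The key step is to identify $\mathcal{C}$ with the category $\Rep_E^{\mathrm{cts}}(\Gal(\bar{E}/E))$ of continuous finite-dimensional $E$-representations. The equivalence should come from descent along the pro-\'etale Galois cover $X_{\bar{E}}\to X_E$: over the algebraically closed base $\bar{E}$ the classification of vector bundles (\Cref{theorem: classification of vector bundles}) shows that every semistable slope $0$ bundle on $X_{\bar{E}}$ is trivial, so a semistable slope $0$ bundle on $X_E$ amounts to a $\Gal(\bar{E}/E)$-equivariant trivial bundle on $X_{\bar{E}}$, which by \Cref{theorem: etale fundamental group of the curve} is precisely a continuous $E$-representation of $\Gal(\bar{E}/E)$. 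Granting this, Tannaka duality identifies exact tensor functors $\Rep_E(H)\to \mathcal{C}\cong \Rep_E^{\mathrm{cts}}(\Gal(\bar{E}/E))$, up to isomorphism, with $H$-torsors over $E$, i.e., with $H^1(\Gal(\bar{E}/E),H)$; combined with the previous paragraph this gives the bijection, and unwinding the constructions shows the identifications are compatible with the canonical map.

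The main obstacle I expect is the careful verification of the tensor equivalence $\mathcal{C}\cong \Rep_E^{\mathrm{cts}}(\Gal(\bar{E}/E))$. In the $p$-adic case this is well known, but the argument has to be arranged so that it also works uniformly for $E=\F_q((t))$; the natural way to do this is via pro-\'etale descent along $X_{\bar{E}}\to X_E$, using triviality of slope $0$ bundles in the algebraically closed case as above, combined with \Cref{theorem: etale fundamental group of the curve} to translate descent data into continuous Galois representations.
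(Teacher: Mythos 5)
Your key step is incorrect: the full subcategory $\mathcal{C}\subseteq \Bun_{X_E}$ of semistable slope $0$ bundles is equivalent to $\mathrm{Vec}_E$, not to $\Rep_E^{\mathrm{cts}}(\Gal(\bar{E}/E))$. By \Cref{theorem: classification of vector bundles}, $\mathcal{E}(-)$ restricts to an equivalence between semistable slope $0$ isocrystals over $L$ and objects of $\mathcal{C}$, and a slope $0$ isocrystal over $L$ is of the form $V_0\otimes_E L$ with $V_0=V^{\varphi=1}$ an $E$-vector space, so that category is $\mathrm{Vec}_E$. Your own descent argument shows the same thing once you look carefully: a $\Gal(\bar{E}/E)$-equivariant trivial bundle on $X_{\bar{E}}$ gives on global sections an $\bar{E}$-vector space with a \emph{semilinear} Galois action, and Galois descent identifies such objects with $E$-vector spaces, not with $\bar{E}$-linear (hence not with continuous $E$-linear) Galois representations. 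Concretely, every object of $\mathcal{C}$ of rank $n$ is isomorphic to $\mathcal{O}_{X_E}^n$ with endomorphism algebra $M_n(E)$, whereas $\Rep_E^{\mathrm{cts}}(\Gal(\bar{E}/E))$ contains many non-isomorphic $n$-dimensional objects. So the claimed tensor equivalence is false, and as written the proof does not go through.

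With the correction $\mathcal{C}\cong\mathrm{Vec}_E$ your overall strategy is salvageable: exact tensor functors $\Rep_E(H)\to\mathcal{C}\cong\mathrm{Vec}_E$ are fiber functors of $\Rep_E(H)$ over $\Spec E$, hence classify $H$-torsors over $E$ by \Cref{lemma: tannakian description of torsors}, giving $H^1(\Gal(\bar{E}/E),H)$; and your identification of $B(H)_{\mathrm{basic}}$ with the classes whose associated tensor functor factors through slope $0$ isocrystals rightly uses the finiteness of $Z(H)$. But note the paper's proof is much shorter and bypasses the curve entirely, staying inside Kottwitz' theory: by \cite[4.5]{kottwitz_isocrystals_with_additional_structure_I} the image of $H^1(\Gal(\bar{E}/E),H)\to B(H)$ consists exactly of the classes with trivial Newton map $\nu_c$; by definition \cite[5.1]{kottwitz_isocrystals_with_additional_structure_I} a class is basic iff $\nu_c$ factors through $Z(H)$; and since $\mathbb{D}$ is connected and $Z(H)$ is finite, any such central $\nu_c$ must vanish. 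The arithmetic kernel, finiteness of the center of a semisimple group, is the same in both arguments, so the geometric detour through \Cref{theorem: fargues theorem} and \Cref{theorem: classification of admissible fiber functors} adds length without adding substance here.
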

\begin{proof}
By \cite[4.5.]{kottwitz_isocrystals_with_additional_structure_I} an element $c\in B(H)$ lies in the image of the canonical injection
$$
H^1(\Gal(\bar{E}/E),H)\to B(H)
$$
if and only if the associated morphism $\nu_c\colon \mathbb{D}\to H$ is trivial where $\mathbb{D}$ is the constant pro-torus with character group $\mathbb{Q}$ (cf.\ \cite[4.2.]{kottwitz_isocrystals_with_additional_structure_I} for the construction of $\nu_c$). Moreover, by definition $c\in B(H)$ is basic if and only if $\nu_c$ factors through the center of $H$ (\cite[5.1.]{kottwitz_isocrystals_with_additional_structure_I}). But if $H$ is semisimple the center of $H$ is finite which implies that every central homomorphism $\nu_c\colon \mathbb{D}\to H$ must be trivial as $\mathbb{D}$ is connected.
\end{proof}

Now assume that $b\in G^\ad(L)$ is arbitrary. By \cite[Proposition 6.2.]{kottwitz_isocrystals_with_additional_structure_I} resp.\ the proof of \Cref{lemma: reduction to tori in the quasi-split case} the element $b$ is $\varphi_L$-conjugate to some $b^\prime\in M^\prime(L)$ for some Levi subgroup $M^\prime\subseteq G^\ad$ such that $b^\prime$ is basic. In particular,
$$
\mathcal{G}\cong \mathcal{Q}\times^{G^\ad}G\cong \mathcal{Q}^\prime \times^{M^\prime} G
$$  
if $\mathcal{Q}^\prime$ denotes the $M^\prime$-torsor corresponding to $b^\prime$ (cf.\ \Cref{proposition: explicit description of b of g}).
Let $M\subseteq G$ be the preimage of $M^\prime\subseteq G^\ad$ under the canonical map $G\to G^\ad$. 
As $M^\prime$ is connected the image $M^{\prime\prime}$ of $M^\prime$ in $\Aut(M)$ must be contained in the adjoint group $M^\ad$ of $M$. Let $\mathcal{Q}^{\prime\prime}$ be the push forward of the $M^\prime$-torsor $\mathcal{Q}^\prime$ to $M^{\prime\prime}$. The element $c^{\prime\prime}\in B(M^{\prime\prime})$ corresponding to $\mathcal{Q}^{\prime\prime}$ will again be basic. Namely, if $c^\prime\in B(M^\prime)$ corresponds to the $M^\prime$-torseur $\mathcal{Q}^\prime$ over $X_E$, then
$$
\nu_{c^{\prime\prime}}=\alpha\circ \nu_{c^\prime}=1
$$
because the homomorphism $\nu_{c^\prime}\colon \mathbb{D}\to M^\prime$ factors through the center $Z(M^\prime)$ of $M^\prime$ and
$$
Z(M^\prime)\subseteq \mathrm{Ker}(M^\prime\xrightarrow{\alpha} M^{\prime\prime}).
$$
By \Cref{lemma: basic elements for semisimple groups} we can conclude
that $\mathcal{G}$ contains the constant reductive group
$$
\mathcal{Q}^{\prime\prime}\times^{M^{\prime\prime}} M\cong\mathcal{Q}^{\prime}\times^{M^{\prime}} M\subseteq \mathcal{Q}^{\prime}\times^{M^{\prime}} G\cong \mathcal{G}. 
$$
which is of the same rank.

In general reductive group schemes can be non-constant. For example, let $\mathcal{E}$ be a non-semistable vector bundle on $X_E$. Then the reductive group scheme
$$
\mathcal{G}:=\mathrm{GL}(\mathcal{E})
$$
is non-constant.

Finally we record the following ``uniformization result'' generalizing \cite[Th\'eo\`eme 7.1.]{fargues_g_torseurs_en_theorie_de_hodge_p_adique} (resp.\ \Cref{theorem: uniformization constant quasi-split case}).

\begin{theorem}
\label{theorem: uniformization}
Let $\mathcal{G}$ be a reductive group scheme over $X_E$, let $x\in X_E$ be a closed point and let $\mathcal{Q}$ be a $\mathcal{G}$-torsor. Then $\mathcal{Q}_{X_E-\{x\}}$ is isomorphic to the trivial $\mathcal{G}_{X_E-\{x\}}$-torsor.
\end{theorem}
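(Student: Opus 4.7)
The plan is to reduce the assertion to Theorem~\ref{theorem: uniformization constant quasi-split case} (the constant quasi-split case) by an inner-twist argument, and to bootstrap the general case from the pure inner form case using Lemma~\ref{lemma: finite extension yields pure inner form}. Set $U := X_E \setminus \{x\}$. By Theorem~\ref{theorem: classification of reductive group schemes} we can write $\mathcal{G} \cong \mathcal{Q}_0 \times^{(G')^{\ad}} G'$ for some quasi-split reductive group $G'$ over $E$ and some $(G')^{\ad}$-torsor $\mathcal{Q}_0$ on $X_E$, and Lemma~\ref{lemma: reductive group schemes over punctured curve} already identifies $\mathcal{G}|_U$ with $G'|_U$ (using Theorem~\ref{theorem: uniformization constant quasi-split case} applied to the quasi-split group $(G')^{\ad}$).

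I first treat the case where $\mathcal{G}$ is a \emph{pure} inner form of $G'$: write $\mathcal{G} \cong \underline{\Aut}_{G'}(\mathcal{R})$ for some $G'$-torsor $\mathcal{R}$ on $X_E$. Contracted product with $\mathcal{R}$ yields a bijection between isomorphism classes of $\mathcal{G}$-torsors and of $G'$-torsors on $X_E$, compatible with restriction to open subschemes, sending $\mathcal{Q}$ to the $G'$-torsor $\mathcal{P} := \mathcal{Q} \times^{\mathcal{G}} \mathcal{R}$ and the trivial $\mathcal{G}$-torsor to $\mathcal{R}$ itself. Both $\mathcal{P}$ and $\mathcal{R}$ are $G'$-torsors on $X_E$ with $G'$ quasi-split, so Theorem~\ref{theorem: uniformization constant quasi-split case} yields that $\mathcal{P}|_U$ and $\mathcal{R}|_U$ are both trivial, hence isomorphic as $G'|_U$-torsors; by the bijection, $\mathcal{Q}|_U$ must be the trivial $\mathcal{G}|_U$-torsor.

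In the general case, Lemma~\ref{lemma: finite extension yields pure inner form} produces a finite extension $E'/E$, a composition of a totally ramified and a purely inseparable extension, such that $\mathcal{G}_{X_{E'}}$ is a pure inner form of $G'_{E'}$. Writing $\pi \colon X_{E'} \to X_E$ for the projection and applying the pure inner form case over $X_{E'}$ at any closed point $x'$ of $X_{E'}$ above $x$, one obtains triviality of $\mathcal{Q}_{X_{E'}}$ on $X_{E'} \setminus \{x'\}$, and since $\pi^{-1}(U) \subseteq X_{E'} \setminus \{x'\}$, also on the smaller open $\pi^{-1}(U)$.

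The main technical obstacle is the final descent: to conclude from the triviality of $\mathcal{Q}_{X_{E'}}|_{\pi^{-1}(U)}$ that already $\mathcal{Q}|_U$ is trivial, which is non-trivial since triviality of a torsor is not fpqc-local in general. I would handle it through the Tannakian dictionary of Section~\ref{section: admissible fiber functors}: the torsor $\mathcal{Q}|_U$ corresponds to a fiber functor $\omega \colon \Rep_E(\mathbb{G}) \to \Bun_U$ which, by the pure inner form case, becomes isomorphic to the canonical fiber functor attached to the trivial torsor after base change along $\pi^{-1}(U) \to U$; then using Lemma~\ref{lemma: admissible fiber functors and finite base change} and arguments parallel to the descent step in the proof of Theorem~\ref{theorem: classification of torsors general case}, exploiting that $E'/E$ is totally ramified composed with a purely inseparable extension, this isomorphism descends to $U$, showing that $\mathcal{Q}|_U$ is trivial.
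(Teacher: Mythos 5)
Your argument in the pure inner form case is correct and is a clean alternative: twisting by the $G'$-torsor $\mathcal{R}$ sets up an equivalence between $\mathcal{G}$-torsors and $G'$-torsors compatible with restriction, and both $\mathcal{P}|_U$ and $\mathcal{R}|_U$ are trivial by \Cref{theorem: uniformization constant quasi-split case}, so $\mathcal{Q}|_U$ is trivial. So far so good.

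The gap is in the final descent step for the general case. Triviality of a torsor is a statement about the existence of a global section, and sections do \emph{not} descend along a finite flat cover such as $\pi\colon\pi^{-1}(U)\to U$: knowing $\mathcal{Q}_{X_{E'}}|_{\pi^{-1}(U)}$ is trivial gives you a section over $\pi^{-1}(U)$, but in general nothing forces it (or any twist of it) to descend to $U$. The references you invoke do not bridge this gap: \Cref{lemma: admissible fiber functors and finite base change} concerns admissibility of a fiber functor over the \emph{full} curve $X_E$ (it compares Harder--Narasimhan filtrations under pullback), and the descent step in the proof of \Cref{theorem: classification of torsors general case} likewise lives over $X_E$, where it can exploit the classification of vector bundles, the vanishing of $H^1$ for nonnegative-slope semistable bundles, and the bijection $B(\Rep_E(\mathbb{G}))\to H^1_{\acute et}(X_E,\mathcal{G})$. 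None of this apparatus is available over the punctured curve $U$, so ``arguments parallel to'' that proof do not obviously transfer.

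The paper sidesteps the whole pure/non-pure dichotomy and the base change. Once \Cref{lemma: reductive group schemes over punctured curve} identifies $\mathcal{G}|_U$ with a constant quasi-split group $G'\times_{\Spec(E)}U$, the $\mathcal{G}|_U$-torsor $\mathcal{Q}|_U$ corresponds to a $G'$-torsor $\mathcal{Q}'$ over $U$. By Beauville--Laszlo gluing, extending $\mathcal{Q}'$ to all of $X_E$ amounts to producing a $G'$-torsor over $\Spec(\widehat{\mathcal{O}_{X_E,x}})$ matching $\mathcal{Q}'$ over $\Spec(\mathrm{Frac}(\widehat{\mathcal{O}_{X_E,x}}))$; but $\widehat{\mathcal{O}_{X_E,x}}\cong k[[t]]$ with $k$ algebraically closed, so $k((t))$ has cohomological dimension $1$ and Steinberg's theorem makes $\mathcal{Q}'|_{\Spec(k((t)))}$ trivial, whence the extension exists. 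Applying \Cref{theorem: uniformization constant quasi-split case} to the extended $G'$-torsor $\mathcal{Q}''$ over $X_E$ then gives triviality of $\mathcal{Q}''|_U\cong\mathcal{Q}'\cong\mathcal{Q}|_U$. You might consider whether the extension-to-$X_E$ step can replace your problematic descent: it applies uniformly, with no need to reduce to the pure inner form case first.
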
                
\begin{proof}
By \Cref{lemma: reductive group schemes over punctured curve} the group scheme 
$$
\mathcal{G}_{|X_E-\{x\}}\cong G\times_{\Spec(E)}(X_E-\{x\})
$$ 
is isomorphic to a constant quasi-split reductive group scheme. Let $\mathcal{Q}^\prime$ be the $G\times_{\Spec(E)}(X_E-\{x\})$-torseur corresponding to $\mathcal{Q}_{X_E-\{x\}}$ under such an isomorphism. Then $\mathcal{Q}^\prime$ admits an extension to a $G$-torsor over $X_E$. Namely, by Beauville-Laszlo glueing (cf.\ \cite{beauville_laszlo_un_lemme_de_descente} and \Cref{lemma: tannakian description of torsors}) it suffices to construct a $G$-torsor over $\Spec(\widehat{\mathcal{O}_{X_E,x}})$ together with an isomorphism to $\mathcal{Q}^\prime$ over $\Spec(\mathrm{Frac}(\widehat{\mathcal{O}_{X_E,x}}))$. But abstractly 
$$
\widehat{\mathcal{O}_{X_E,x}}\cong k[[t]]
$$ 
is isomorphic to a power series ring by Cohen's structure theorem. In particular, 
$$
\mathrm{Frac}(\widehat{\mathcal{O}_{X_E,x}})\cong k((t))
$$ 
is of cohomological dimension $1$ as $k$ is algebraically closed in our case. Steinberg's theorem finally implies that every $G$-torsor over $\Spec(k((t)))$ is trivial and, in particular, admits an extension to $k[[t]]$.
Thus, let $\mathcal{Q}^{\prime\prime}$ be a $G$-torsor extending $\mathcal{Q}^\prime$. By \Cref{theorem: uniformization constant quasi-split case} we can conclude that 
$$
\mathcal{Q}^{\prime\prime}_{|X_E-\{x\}}\cong \mathcal{Q}^\prime\cong \mathcal{Q}_{X_E-\{x\}}
$$
is isomorphic to the trivial torsor because $G^\prime$ is quasi-split.  
\end{proof}

In fact, in \Cref{theorem: uniformization} we have shown
$$
H^1_{\acute{e}t}(X_E-\{x\},G)=0
$$
for every reductive group $G$ over $E$.

\end{document}